\newcommand{\R}{\mathbf{R}}
\newcommand{\s}{\mathbf{S}}
\newcommand{\N}{\mathbf{N}}
\newcommand{\A}{\ensuremath{\mathcal{A}}}
\newcommand{\abs}[1]{\left\lvert#1\right\rvert}
\newcommand{\norm}[1]{\left\lVert#1\right\rVert}
\newcommand{\diff}{\mathop{}\mathopen{}\mathrm{d}}
\newcommand{\sst}[2]{\left\{\;#1 \;\middle|\; #2 \;\right\}}
\newcommand{\sign}{\textrm{sign}}
\newcommand{\y}{Y}
\newtheorem{theorem}{Theorem}
\newtheorem{proposition}[theorem]{Proposition}
\newtheorem{coro}[theorem]{Corollary}
\newtheorem{conjecture}[theorem]{Conjecture}
\newtheorem{lemma}{Lemma}[section]
\theoremstyle{definition}
\newtheorem{example}{Example}[section]
\newcommand{\vc}[1]{\ensuremath{\vcenter{\hbox{#1}}}}
\newcommand{\flagFiveOne}[1][\flagheight]{\hspace{0.2cm}\vc{\includegraphics[page=1,height=#1]{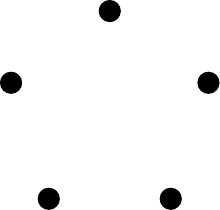}} \hspace{0.2cm}}
\newcommand{\flagFiveTwo}[1][\flagheight]{\hspace{0.2cm}\vc{\includegraphics[page=2,height=#1]{flag0.pdf}} \hspace{0.2cm}}
\newcommand{\flagFiveThree}[1][\flagheight]{\hspace{0.2cm}\vc{\includegraphics[page=3,height=#1]{flag0.pdf}}\hspace{0.2cm} }
\newcommand{\flagFourOne}[1][\flagheight]{\hspace{0.2cm}\vc{\includegraphics[page=4,height=#1]{flag0.pdf}\hspace{0.2cm}} }
\newcommand{\flagFourTwo}[1][\flagheight]{\hspace{0.2cm}\vc{\includegraphics[page=5,height=#1]{flag0.pdf}}\hspace{0.2cm} }
\newcommand{\flagThree}[1][\flagheight]{\hspace{0.2cm}\vc{\includegraphics[page=6,height=#1]{flag0.pdf}}\hspace{0.2cm} }
\newcommand{\flagThreeLab}[1][\flagheight]{\hspace{0.2cm}\vc{\includegraphics[page=7,height=#1]{flag0.pdf}}\hspace{0.2cm} }
\newcommand{\flagFourLabOne}[1][\flagheight]{\hspace{0.2cm}\vc{\includegraphics[page=8,height=#1]{flag0.pdf}}\hspace{0.2cm} }
\newcommand{\flagFourLabFive}[1][\flagheight]{\hspace{0.2cm}\vc{\includegraphics[page=12,height=#1]{flag0.pdf}}\hspace{0.2cm} }
\newcommand{\typeFourOne}[1][\flagheight]{\vc{\includegraphics[page=15,height=#1]{flag0.pdf}} }
\newcommand{\typeFourTwo}[1][\flagheight]{\vc{\includegraphics[page=16,height=#1]{flag0.pdf}} }
\newcommand{\flagThreeLabTwo}[1][\flagheight]{\hspace{0.2cm}\vc{\includegraphics[page=17,height=#1]{flag0.pdf}} \hspace{0.2cm}}
\newcommand{\flagFourLabEight}[1][\flagheight]{\hspace{0.2cm}\vc{\includegraphics[page=18,height=#1]{flag0.pdf}} \hspace{0.2cm}}
\newcommand{\flagFourLabNine}[1][\flagheight]{\hspace{0.2cm}\vc{\includegraphics[page=19,height=#1]{flag0.pdf}} \hspace{0.2cm}}
\newcommand{\flagFourLabTen}[1][\flagheight]{\hspace{0.2cm}\vc{\includegraphics[page=20,height=#1]{flag0.pdf}} \hspace{0.2cm}}
\newcommand{\flagThreeLabThree}[1][\flagheight]{\hspace{0.2cm}\vc{\includegraphics[page=21,height=#1]{flag0.pdf}} \hspace{0.2cm}}
\newcommand{\flagFourLabEleven}[1][\flagheight]{\hspace{0.2cm}\vc{\includegraphics[page=22,height=#1]{flag0.pdf}} \hspace{0.2cm}}
\newcommand{\flagFourLabTwelvea}[1][\flagheight]{\hspace{0.2cm}\vc{\includegraphics[page=23,height=#1]{flag0.pdf}} \hspace{0.2cm}}
\newcommand{\flagFiveLabOne}[1][\flagheight]{\hspace{0.2cm}\vc{\includegraphics[page=24,height=#1]{flag0.pdf}} \hspace{0.2cm}}
\newcommand{\flagFiveLabTwo}[1][\flagheight]{\hspace{0.2cm}\vc{\includegraphics[page=25,height=#1]{flag0.pdf}} \hspace{0.2cm}}
\newcommand{\flagFourLabThirteen}[1][\flagheight]{\hspace{0.2cm}\vc{\includegraphics[page=26,height=#1]{flag0.pdf}} \hspace{0.2cm}}
\newcommand{\flagFourLabFourteen}[1][\flagheight]{\hspace{0.2cm}\vc{\includegraphics[page=27,height=#1]{flag0.pdf}} \hspace{0.2cm}}
\newcommand{\flagFourLabFifteen}[1][\flagheight]{\hspace{0.2cm}\vc{\includegraphics[page=28,height=#1]{flag0.pdf}} \hspace{0.2cm}}
\newcommand{\flagFourLabSixteen}[1][\flagheight]{\hspace{0.2cm}\vc{\includegraphics[page=29,height=#1]{flag0.pdf}} \hspace{0.2cm}}
\newcommand{\flagFourLabTwelveb}[1][\flagheight]{\hspace{0.2cm}\vc{\includegraphics[page=30,height=#1]{flag0.pdf}} \hspace{0.2cm}}
\newcommand\ot{\ensuremath{\omega}}
\newcommand\sot{\ensuremath{\mathcal{O}}}
\newcommand\schi{\ensuremath{\mathcal{X}}}
\renewcommand{\leq}{\leqslant}
\renewcommand{\geq}{\geqslant}
\renewcommand{\le}{\leqslant}
\renewcommand{\ge}{\geqslant}
\newcommand{\pth}[1]{\left(#1 \right)}
\DeclareMathOperator{\cll}{cl_{\|\|_2}}
\DeclareMathOperator{\bll}{\partial_{\|\|_2}}
\DeclareMathOperator{\dist}{d}
\DeclareMathOperator{\Prob}{\mathbf{P}}
\DeclareMathOperator{\Ex}{\mathbf{E}}
\DeclareMathOperator{\Var}{\mathbf{Var}}
\DeclareMathOperator{\supp}{supp}
\DeclareMathOperator{\conv}{conv}
\newcommand\Kern{\ensuremath{\mathfrak{K}}}
\DeclareMathOperator{\Circ}{\mathfrak{C}}
\DeclareMathOperator{\Hom}{Hom}
\DeclareMathOperator{\vect}{vect}
\newcommand\labelset{\ensuremath{\mathcal{Z}}}
\newcommand{\unlab}[2]{\left\llbracket #1\right\rrbracket_{#2}}
\newcommand{\bigo}[1]{O\mathopen{}\left(#1\right)}
\newcommand{\littleo}[1]{o\mathopen{}\left(#1\right)}
\title{Limits of Order Types\thanks{This work was partially supported by ANR blanc PRESAGE (ANR–11-BS02–003).}}
\author{Xavier Goaoc\thanks{%
Universit\'e de Lorraine, CNRS, INRIA, LORIA, F-54000, Nancy, France.
Email: \texttt{xavier.goaoc@loria.fr}. This author was partially supported by Institut Universitaire de France.}
\and Alfredo Hubard\thanks{Universit\'e Paris Est, LIGM (UMR 8049), CNRS, ENPC, ESIEE, UPEM, F-77454, Marne-la-Vall\'ee, France.
Email: \texttt{alfredo.hubard@u-pem.fr}.}
\and R\'emi de Joannis de Verclos\thanks{%
    Radboud University Nijmegen, Netherlands. Email: \texttt{r.deverclos@math.ru.nl}.}
\and Jean-S\'ebastien Sereni\thanks{%
        Centre National de la Rercherche Scientifique, ICube (CSTB), Strasbourg, France.
        Email: \texttt{sereni@kam.mff.cuni.cz}.}
\and Jan Volec\thanks{%
Department of Mathematics, Emory University, Atlanta, USA\@. Email: \texttt{jan@ucw.cz}.
This project has received funding from the European Union’s Horizon~2020
research and  innovation programme under the Marie Skłodowska-Curie grant
agreement No.~800607.  Previous affiliation: Department of Mathematics and
Statistics, McGill University, Montreal, Canada, where this author was
supported by CRM-ISM fellowship.}}
\date{\today}
\begin{document}
\maketitle

\begin{abstract}
  We apply ideas from the theory of limits of dense combinatorial structures to
    study order types, which are combinatorial encodings of finite point sets.
    Using flag algebras we obtain new numerical results on the Erd\H{o}s
    problem of finding the minimal density of 5- or 6-tuples in convex
    position in an arbitrary point set, and also an inequality expressing the
    difficulty of sampling order types uniformly. Next we establish results on
    the analytic representation of limits of order types by planar measures. Our
    main result is a rigidity theorem: we show that if sampling two measures
    induce the same probability distribution on order types, then these
    measures are projectively equivalent provided the support of at least one
    of them has non-empty interior. We also show that some condition on the
    Hausdorff dimension of the support is necessary to obtain projective
    rigidity and we construct limits of order types that cannot be represented by
    a planar measure. Returning to combinatorial
    geometry we relate the regularity of this analytic representation to the aforementioned problem of Erdős
    on the density of $k$-tuples in convex position, for large~$k$. 
\end{abstract}

\paragraph{Keywords:} Limits of structures, flag algebra, geometric measure theory, Erd\H{o}s-Szekeres theorem, Sylvester's problem.

\section{Introduction}

The theory of \emph{dense graph limits}, developed over the last
decade by Borgs, Chayes, Lov\'asz, Razborov, S\'os, Szegedy,
Vesztergombi and others, studies sequences of large graphs using a
combination of equivalent formalisms: algebraic (as positive
homomorphisms from certain graph algebras into~$\R$), analytic (as
measurable, symmetric functions from~${[0,1]}^2$ to~$[0,1]$ called
\emph{graphons}) and discrete probabilistic (as families~$\{D_n\}$ of
probability distributions over $n$-vertex graphs satisfying certain relations).
These viewpoints are complementary: while the algebraic formalism allows
effective computations \emph{via} semi-definite methods~\cite{Razborov:2007},
the analytic viewpoint offers powerful methods (norm equivalence,
completeness) to treat in a unified setting a diversity of graph problems such
as pseudorandom graphs or property testing~\cite{Lov12}.

In this article, we combine ideas from dense graph
limits with order types, which are combinatorial structures arising in
geometry. The order type of a point set encodes the respective
positions of its elements, and suffices to determine many of its
properties, for instance its convex hull, its triangulations, or which
graphs admit crossing-free straight line drawings with vertices
supported on that point set. Order types have received continued
attention in discrete and computational geometry since the 1980s and
are known to be rather intricate objects, difficult to
axiomatise~\cite{Stretchability}.  While order types are well defined in a variety of
contexts (arbitrary dimensions, abstractly \emph{via} the theory of oriented matroids)
all point sets considered in this work are finite
subsets of the Euclidean plane with no aligned triple, unless otherwise specified.

\subsection{Order types and their limits}

Let us first define our main objects of study. 

\bigskip
\noindent
\begin{minipage}{12cm}
\paragraph{Order types.}
Define the \emph{orientation} of a triangle~$pqr$ in the plane to be
\emph{clockwise} (CW) if~$r$ lies to the right of the line~$pq$
oriented from~$p$ to~$q$ and \emph{counter-clockwise}~(CCW) if~$r$
lies to the left of that oriented line. (So the orientation of~$qpr$
is different from that of~$pqr$.) We say that two planar point sets~$P$ and~$Q$ \emph{have the same order type} if there exists a
bijection~$f\colon P \to Q$ that preserves orientations: for any triple of
pairwise distinct points~$p,q,r \in P$ the triangles~$pqr$ and~$f(p)f(q)f(r)$ have the same orientation.
The relation of having
the same order type is easily checked to be an equivalence relation; \hfill the
equivalence class, for this relation, of 
a finite  point set~$P$ is
called the \emph{order type}
\end{minipage}
\hfill
\begin{minipage}{3.5cm}
\includegraphics[keepaspectratio, width=3.4cm]{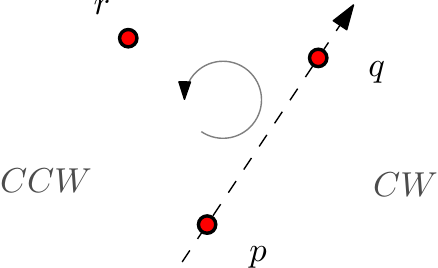}
\end{minipage}

\smallskip
\noindent
of~$P$.
A point set~$P$ with order type~$\ot$ is
called a \emph{realization} of~$\ot$.

\medskip

When convenient, we extend to order types any notion that can be
defined on a set of points and does not depend on a particular choice
of realization. For instance we define the \emph{size} of an order
type~$\ot$ to be the cardinality~$|\ot|$ of any of its realization. We
adopt the convention that there is exactly one order type of each of
the sizes~$0$, $1$ and~$2$. We let~$\sot$ be the set of order types
and~$\sot_n$ the set of order types of size~$n$.

\paragraph{Convergent sequences and limits of order types.}

We define the \emph{density}~$p(\ot,\ot')$ of an order type~$\ot$ in
another order type~$\ot'$ as the probability that~$|\ot|$ random
points chosen uniformly from a point set realizing~$\ot'$ have
order type~$\ot$.  (Observe that this probability depends solely on
the order types and not on the choice of realization.) We say that a
sequence~${(\ot_n)}_{n \in \N}$ of order types \emph{converges} if the
size~$|\ot_n|$ goes to infinity as~$n$ goes to infinity, and for
any fixed order type~$\ot$ the sequence~${(p(\ot,\ot_n))}_n$ of densities
converges. The \emph{limit} of a convergent sequence of
order types~${(\ot_n)}_{n \in \N}$ is the map
\[ \left\{\begin{array}{rcl}\sot & \to & [0,1]\\ \ot & \mapsto & \lim_{n
      \to \infty} p(\ot,\ot_n)\end{array}\right.\]
A standard compactness argument reveals that limits of order types
abound.  Indeed, for each element~$\ot_n$ in a sequence of order types,
the map~$\ot \in \sot \mapsto p(\ot,\ot_n)$ can be seen as a point in~${[0,1]}^\N$,
which is compact by Tychonoff's theorem. Any sequence of order types with sizes going
to infinity therefore contains a convergent subsequence.

\subsection{Problems and results}

We explore the application of the theory of limits of dense graphs to order
types in two directions. On one hand, the algebraic description of limits as
positive homomorphisms of flag algebras makes these limits amenable to
semi-definite programming methods. We implemented this approach for order types
and obtained numerical results.  On the other hand, the fact that measures
generally define limits of order types (\cref{l:lmu}) unveils stimulating
problems and interesting questions, of a more structural nature, on the
relation between measures and limits of order types.

\paragraph{Flag algebras of order types.}
The starting motivation for our work is a question raised by
Erd\H{o}s and Guy~\cite{ErGu73} in~1973 (see also~\cite{Erd84}):
``what is the minimum number~$\conv_k(n)$ of convex
$k$-gons in a set of~$n$ points in the plane?''. 
This falls within the scope of a general (and more conceptual than precise)
question of Sylvester:
``what is the probability that four points at random are in convex position?''.
Making sense of Sylvester's question implies defining a distribution on $4$-tuples of points,
and from the beginning of the~20th century several variants using
distributions coming from the theory of convex sets were investigated (\emph{e.g.}~uniform
or gaussian distributions on compact convex sets). 
For more background on this, the reader is referred to the survey by Ábrego, Fernández-Merchant
\&~Salazar~\cite{Xing} and to the book by Brass, Moser \&~Pach~\cite[Section~8]{problembook}.
We also point out that Sylvester's question is
actually related to several important conjectures in convex geometry~\cite[Chapter~3]{BGVV14}.
By a standard double-counting argument, one sees that~$\conv_k(n+1)\ge\frac{n+1}{n+1-k}\conv_k(n)$, so the
limiting density
\[c_k= \lim_{n \to \infty} \conv_k(n)/\binom{n}{k}\]
is well defined and equal to the supremum of this ratio for~$n\in\mathbf{N}$.
We apply the framework of flag algebras to order types and use the semi-definite method
to obtain lower bounds on~$c_k$ for~$k\in\{4,5,6\}$.
As it turns out, the literature around the computation of~$c_4$ is vast: not only does~$c_4$
correspond to the last open case of a relaxation of Sylvester's conjecture to all open
sets of finite area, but as discovered by Scheinerman and Wilf~\cite{ScWi94},
its value is determined by the asymptotic behaviour of the rectilinear crossing number of the complete graph,
which has been extensively investigated. For this particular case,
the best lower bound we could obtain
is~$c_4\ge0.37843917$, which falls short of the currently best known lower bound,
namely~$277/729>0.3799$. This better lower bound is obtained by plugging results of
Aichholzer \emph{et al.}~\cite{AGOR07} and~Ábrego \emph{et al.}~\cite{ACF+12,AFLS08} into
an expression of the rectilinear crossing number found independently by
Lovász \emph{et al.}~\cite{LVWW04} and by~Ábrego and Fernández-Merchant~\cite{AbFe05}.
The best currently known upper bound on~$c_4$ is~$c_4<0.380473$,
and is due to Fabila-Monroy and López~\cite{FaLo14}.
Nonetheless, our method allows us to strongly improve the known lower bounds
on~$c_5$ and on~$c_6$. 

\begin{proposition}\label{p:c5c6}
      We have $c_5 \ge 0.0608516$ and~$c_6 \ge 0.0018311$.
\end{proposition}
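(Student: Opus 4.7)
The plan is to adapt Razborov's flag algebra machinery to the setting of order types and apply the semi-definite method to produce certified lower bounds on $c_5$ and $c_6$. Let $\A$ be the $\R$-algebra whose elements are formal (finite) linear combinations of order types of some fixed size, quotiented by the ``averaging'' relation $\ot = \sum_{\ot' \in \sot_{n+1}} p(\ot,\ot') \cdot \ot'$, and equipped with the product in which $\ot_1 \cdot \ot_2$ is the average, over random disjoint labelings, of the sampled joint order type. The limit functional $\phi \colon \sot \to [0,1]$ of a convergent sequence $(\ot_n)$ extends to a positive algebra homomorphism $\phi \colon \A \to \R$, positivity meaning that $\phi(x^2) \ge 0$ for every $x \in \A$. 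As in the graph case one also has typed analogues $\A^\sigma$ (flag algebras of order types rooted at a labeled partial configuration $\sigma$) together with an averaging operator $\llbracket \cdot \rrbracket_\sigma \colon \A^\sigma \to \A$ that commutes with $\phi$ and preserves positivity.

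The quantity $c_k$ coincides with $\phi(C_k)$, where $C_k \in \A$ is the sum of all order types of size $k$ in convex position. To lower-bound $c_k$ by some constant $\gamma$, it suffices to exhibit finitely many typed flag-algebra elements $f_{\sigma,i} \in \A^\sigma$ with
\[
    C_k - \gamma \cdot \mathbf{1} \;=\; \sum_\sigma \left\llbracket \sum_i f_{\sigma,i}^2 \right\rrbracket_\sigma \quad \text{in } \A_N
\]
for a suitably large working size $N$, where $\A_N$ is the degree-$N$ piece of $\A$. Applying $\phi$ to both sides and using positivity of $\phi$ on squares then yields $c_k \ge \gamma$. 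After fixing a basis, the existence of such a decomposition (for rational positive semidefinite Gram matrices) becomes a semi-definite feasibility program in the coefficients of $f_{\sigma,i}$ in their flag bases.

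Concretely, I would first enumerate $\sot_N$ for some small $N$ (realistically $N=7$ or $N=8$), together with all types $\sigma$ of size at most $N-k$ and all $\sigma$-flags of the appropriate size; this relies on efficient order-type databases (\emph{e.g.}~those of Aichholzer \emph{et al.}) to list representatives and compute the sampling densities $p(\ot,\ot')$, the product structure constants of $\A$, and the structure of $\llbracket \cdot \rrbracket_\sigma$. I would then assemble the resulting linear constraints on the Gram matrices indexed by the flags, solve the SDP numerically (maximizing $\gamma$), and finally round the floating-point solution to a rational feasible point that exactly certifies $c_k \ge \gamma$ for the explicit numerical values claimed in the proposition.

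The main obstacle is the combinatorial explosion of the flag algebra of order types: unlike graphs, $|\sot_n|$ grows roughly like $n^{4n}$, so even at $N=7$ or $8$ the number of flags and the size of the PSD blocks push the SDP to the edge of what is tractable, and one must exploit every available symmetry (in particular the action that reverses orientation) to reduce block sizes. The second delicate point is the rounding step: since the final bound must be exact, one needs enough slack in the numerical optimum to survive the projection of the computed Gram matrices onto the affine subspace cut out by the exact linear constraints while keeping them positive semidefinite. Given the optimized SDP solutions, these steps yield precisely the claimed values $c_5 \ge 0.0608516$ and $c_6 \ge 0.0018311$.
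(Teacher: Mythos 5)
Your proposal follows essentially the same strategy as the paper: reformulate limits of order types as positive algebra homomorphisms, set up rooted flag algebras $\A^\sigma$ with their averaging operators, enumerate small order types to build the SDP (the paper works at $N=8$ with $24$ chirotopes), solve numerically, and round to a rational PSD certificate. Your framing and the obstacles you flag (combinatorial growth of $|\sot_n|$, rounding the numerical optimum to a verifiable exact certificate) match the paper's own treatment.
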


\noindent
To the best of our knowledge, prior to this work the
only known lower bounds on any constant~$c_k$ with~$k\ge5$ followed from a
general and important result of Erd\H{o}s and Szekeres~\cite{ErSz35}, \emph{via} a simple
double counting argument. One indeed sees that~$c_5\ge\frac{5!4!}{9!}>0.00793$
using that nine points in the plane must contain a convex pentagon
(a result attributed to Makai by Erdős and Szekeres~\cite{ErSz35}, the first published proof
being by Kalbfleisch, Kalbfleisch \& Stanton~\cite{KKS70}).
Similarly, as Szekeres and Peters~\cite{SzPe06} proved (using a computer-search)
that the Erd\H{o}s-Szekeres conjecture~\cite{ErSz35} is true for convex hexagons,
one can use that seventeen points in the plane must contain
a convex hexagon to infer that~$c_6\ge\frac{11!6!}{17!}>0.0000808$.
The best upper bounds that we are aware of on these numbers are~$c_5 \le 0.0625$ and~$c_6 \le 0.005822$.
We point out that Ábrego (personal communication) conjectured that~$c_5
=0.0625$.  We again refer the interested reader to the survey
and the book cited above~\cite{Xing,problembook}.

We prove~\cref{p:c5c6} by a reformulation of
limits of order types as positive homomorphisms from a so-called flag
algebra of order types into~$\R$ (see \cref{p:hom}); this
point of view allows a semidefinite programming formulation of the
search for inequalities satisfied by limits of order types.
Specifically, we argue that for any limit of order types~$\ell$
\[ \ell(\diamond_5) \ge 0.0608516 \qquad \text{and} \qquad
\ell(\diamond_6) \ge 0.0018311,\]
where~$\diamond_k$ is the order type of~$k$ points in convex position
for any positive integer~$k$.

On a related topic, we can mention a recent application of flag algebras
by Balogh, Lidick{\'y}, and {Salazar}~\cite{hill} to the (non rectilinear) crossing number
of the complete graph.
Their techniques differ from ours in that they use rotational systems instead of order types,
and they use results about the crossing number of the complete graph for small
numbers.

\medskip
We now turn to another aspect of our work using flag algebras of order types.
Probabilistic constructions often present
extremal combinatorial properties that are beyond our imagination, a textbook example being the lower
bound on Ramsey numbers for graphs devised by Erd\H{o}s~\cite{Erd47} in~1947.
Sampling order types of a given size \emph{uniformly} is of much interest to
test conjectures and search for extremal examples (see
\emph{e.g.}~\cite[p.~326]{problembook}). However the uniform distribution on
order types seems out of reach as suggested by the lack of closed formulas for
counting them, or heuristics to generate them, but we know of no formal
justification of the hardness of approximation of this distribution. As it
turns out, limits of order types can also be defined as families of probability
distributions on order types with certain internal consistencies (see
\cref{p:distrib}) and we exploit this interpretation to provide negative
results on certain sampling strategies.  Our second result obtained by the
semidefinite method of flag algebras indeed shows that a broad class of random
generation methods must exhibit some bias.

\begin{proposition}\label{p:imbalance}
For any limit of order types~$\ell$ there exist two order types~$\omega_1$
and~$\omega_2$ of size~$6$ such that
$\ell(\ot_1) > 1.8208 \ \ell(\omega_2)$. 
\end{proposition}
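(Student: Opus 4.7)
The plan is to prove this statement by the semidefinite-programming method of flag algebras applied to order types, exploiting the identification of limits of order types with positive homomorphisms from the flag algebra of order types into $\R$ (\cref{p:hom}). The assertion is equivalent to saying that for every limit $\ell$,
\[ \max_{\omega \in \sot_6} \ell(\omega) > 1.8208 \cdot \min_{\omega \in \sot_6} \ell(\omega). \]
Its negation would force all densities $\{\ell(\omega) : \omega \in \sot_6\}$ to lie in a common interval $[m, 1.8208\, m]$, with $m$ constrained by $\sum_{\omega \in \sot_6} \ell(\omega) = 1$. The task is thus to rule out any positive homomorphism of the flag algebra whose restriction to the size-$6$ slice satisfies those two-sided bounds.

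Concretely, I would set up the SDP whose feasibility encodes the existence of such a ``near-uniform'' limit. The variables are the densities $\ell(\omega)$ for order types $\omega$ up to some computationally tractable size $N$; the linear constraints are the standard flag-algebra identities (summing a density over the one-point extensions of an order type of size $n$ recovers its density from densities of size $n+1$, and the total mass on each $\sot_n$ is $1$); the semidefinite constraints encode non-negativity of Cauchy--Schwarz-type squares $q^\top X q \succeq 0$ in flag algebras with a fixed labeled type; and the extra inequalities $\ell(\omega_1) \le 1.8208 \cdot \ell(\omega_2)$ are imposed for every pair $\omega_1, \omega_2 \in \sot_6$. Infeasibility is then certified by exhibiting a non-negative combination of squares, of flag identities, and of the slack expressions $1.8208\,\ell(\omega_2) - \ell(\omega_1)$ that sums to a strictly negative number, which would deliver the required contradiction.

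The main obstacle is twofold. First, the flag algebra of order types grows very fast with $N$: already $|\sot_7|$ is in the hundreds of thousands, and the SDP matrices produced by pairing these with labelled types become unwieldy; managing this combinatorial explosion requires exploiting the symmetries of order types (reflection of the plane, and the natural permutation action on the labels within a type) in order to block-diagonalise the resulting SDP. Second, and more delicate, is rigorous rounding: the SDP is solved numerically in floating point, and extracting an exact certificate that \emph{provably} yields the precise constant $1.8208$ demands the rounding procedures developed in the flag-algebra literature. Once both difficulties are handled, the dual certificate produced by the SDP directly yields the claimed inequality for every limit~$\ell$.
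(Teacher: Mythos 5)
Your proposal follows the same flag-algebra / semidefinite-programming methodology as the paper (via Proposition~\ref{p:hom}), and the overall strategy is sound; the interesting divergence is in how the infeasibility is decomposed. You propose a single SDP carrying all pairwise slack constraints $1.8208\,\ell(\omega_2)-\ell(\omega_1)\ge 0$ for $\omega_1,\omega_2\in\sot_6$, certified by one combined positivity certificate. The paper instead first establishes a clean but weaker bound by solving \emph{two} SDPs: one producing a certificate of the form
$\sum_{j\in[20]} d_j\bigl(\ot_{6,j}-\tfrac1{32}\sum_{\ot\in\sot_8}\ot\bigr)+\sum_{i}\llbracket v_i^\top M_i v_i\rrbracket_{\sigma_i}<0$
with the $d_j\ge 0$ and the $M_i$ positive semidefinite, which rules out any limit with $\ell(\ot)\ge 1/32$ for every $\ot\in\sot_6$, and a symmetric one ruling out $\ell(\ot)\le 1/18$ for every $\ot\in\sot_6$. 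Together these give $\max_\omega\ell(\omega)/\min_\omega\ell(\omega)>32/18>1.77$. Only then does the paper reach the stated constant~$1.8208$, by branching: it prescribes which order types of size~$6$ attain the maximal and minimal density and solves one SDP per branch, over $700$ programs in total. Your single-SDP formulation is a legitimate alternative and is in fact a tighter relaxation than the paper's two-step bound (so it could in principle reach the same or a better constant in one shot); the paper's branching is presumably a concession to numerical tractability and to ease of exact rational rounding, which you rightly flag as the delicate point (the paper stores its matrices as explicit non-negative sums of squares and ships a verification script). A minor factual slip: $|\sot_7|=242$ and $|\sot_8|=6405$; the hundreds-of-thousands range is reached around~$\sot_9$, not~$\sot_7$.
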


\noindent
This unavoidable bias holds, in particular, for the random generation
of order types by independent sampling of points from \emph{any}
finite Borel measure over~$\R^2$, see \cref{l:lmu}.

\paragraph{Representing limits by measures.}

Given a finite Borel measure~$\mu$ over~$\R^2$ and an order type~$\ot \in
\sot$, let~$\ell_\mu(\ot)$ be the probability that~$|\ot|$ random
points sampled independently from~$\mu$ have order type~$\omega$. It
turns out (\cref{l:lmu}) that $\ell_\mu$ is a limit of order
types if and only if every line is negligible for~$\mu$.  Going in the
other direction, it is natural to wonder if geometric measures could
serve as analytic representations of limits of order types, like
graphons for limits of dense graphs. This raises several questions, here are some that we investigate: does
every limit of order types enjoy such a realization? For those which do,
what does the set of measures realizing them look like?
In analogy with the study of realization spaces of order types~\cite{ARW17,DHH17,KaPa11,Stretchability}
we investigate how the weak topology on probability measures relates to the topology on limits. 

We provide an explicit construction of a limit of order types that cannot be
represented by any measure in the plane. For~$t \in (0,1)$ let~$\odot_t$ be the
probability distribution over~$\R^2$ supported on two concentric circles with
radii~$1$ and~$t$, respectively, where each of the two circles has
$\odot_t$-measure~$1/2$, distributed proportionally to the length on this
circle. Because every line is negligible for~$\odot_t$, \cref{l:lmu} ensures
that~$\ell_{\odot_t}$ is a limit of order types. We define~$\ell_\odot$ to be
the limit of an arbitrary convergent sub-sequence of~${(\ell_{\odot_{1/n}})}_{n
\in \N}$.

\begin{proposition}\label{t:norep}
  There exists no probability measure~$\mu$ such that $\ell_\mu =
  \ell_\odot$.
\end{proposition}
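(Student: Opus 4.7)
Suppose for contradiction that some probability measure $\mu$ on $\R^2$ satisfies $\ell_\mu = \ell_\odot$. The strategy is to exhibit an order-type-measurable statistic whose $\ell_\odot$-value cannot be matched by any such $\ell_\mu$. The key is the ``two-scale'' nature of $\ell_\odot$: the inner circle shrinks to a point in the defining sequence $\odot_{1/n}$ while the outer circle stays fixed, and no single probability measure on $\R^2$ can mimic this scale separation.

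First, I describe $\ell_\odot$ by passing to the limit in $\odot_t$. Sampling $n$ points from $\odot_t$ labels each point independently as \emph{inner} (on the radius-$t$ circle) or \emph{outer} (on the unit circle), each with probability $1/2$. As $t \to 0^+$, inner points concentrate at the origin $o$, so for any triple $(p,q,r)$ with $p$ inner and $q,r$ outer, $\text{or}(p,q,r) = \text{or}(o,q,r)$; in particular, all inner points deterministically agree on the sign of their orientation against every chord determined by two outer points.

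Next, for each $k \geq 1$, consider the event $A_k$ on a sample $(p_1,\ldots,p_k,q,r)$ of $k+2$ points that $\text{or}(p_1,q,r) = \cdots = \text{or}(p_k,q,r)$. Case-splitting on the inner/outer labels of $(q,r)$ and integrating the resulting probabilities over uniform angles on the circles, I find
\[
\Prob_{\ell_\odot}[A_k] \;=\; \frac{1}{k+1} + o\!\left(\frac{1}{k}\right) \quad \text{as } k \to \infty,
\]
the dominant contribution coming from ``$(q,r)$ both outer'' (the other cases are $O(2^{-k})$): given this, each inner $p_i$ is forced to the origin-side of the chord, while each outer $p_i$ is on a given side of the chord with probability equal to the corresponding arc fraction. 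In contrast, for any $\mu$ with every line $\mu$-negligible, setting $f_\mu(q,r) = \Prob_{p\sim\mu}[p \text{ lies on the positive side of line } qr]$, one has the identity
\[
\Prob_{\ell_\mu}[A_k] \;=\; \Ex_{(q,r)\sim\mu^{\otimes 2}}\!\bigl[f_\mu(q,r)^k + (1 - f_\mu(q,r))^k\bigr].
\]
By Hausdorff's moment theorem, the sequence $(\Prob_{\ell_\mu}[A_k])_{k\geq1}$ determines the pushforward of $\mu^{\otimes 2}$ by $f_\mu$ on $[0,1]$ uniquely. Matching with the $\ell_\odot$-values forces this pushforward to place specific positive mass arbitrarily close to $\{0,1\}$.

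Finally, I argue that this concentration forces $\mu$ to have an atom of mass $1/2$, corresponding to the ``collapsed'' inner-circle half of $\ell_\odot$. Since any atom lies on infinitely many lines, each of $\mu$-measure at least $1/2$, this contradicts the requirement (from Lemma~\ref{l:lmu}) that every line be $\mu$-negligible. The main technical obstacle is precisely this last step: translating the moment-theoretic constraint on $f_\mu$ into the geometric existence of an atom in $\mu$. A potentially cleaner route, sidestepping the moment problem, is to prove a universal inequality of the form $\Prob_{\ell_\mu}[A_k] \geq c_0/(k+1)$ valid over all admissible $\mu$ with some constant $c_0 > 1$, thereby directly contradicting the computed value $\Prob_{\ell_\odot}[A_k] \sim 1/(k+1)$.
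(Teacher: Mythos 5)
Your approach is genuinely different from the paper's: you try to distinguish $\ell_\odot$ from every $\ell_\mu$ using a single scalar statistic (the probability $\Prob[A_k]$ that $k$ further sample points all lie on one side of a random chord), reduced via the Hausdorff moment theorem to a constraint on the pushforward of $\mu^{\otimes 2}$ under the halfplane-mass map $f_\mu$. The paper instead argues geometrically: it first reduces (via a peeling-depth argument) to $\mu$ with compact support, then samples $N=n^2$ points and shows that roughly $N/2$ of them land within $\varepsilon$ of a line $L_n$ cutting a short hull edge, which in the limit forces $\mu(L)\geq 0.49$ for some line $L$, a contradiction. Your steps (1)--(3) are essentially sound; I recomputed $\Prob_{\ell_\odot}[A_k]$ and indeed the ``both outer'' case contributes exactly $\tfrac1{k+1}(1-(3/4)^{k+1})$ while the ``both inner'' and ``mixed'' cases contribute $O((3/4)^k/k)$ (not $O(2^{-k})$, but still $o(1/k)$), and the formula $\Prob_{\ell_\mu}[A_k]=\Ex\bigl[f_\mu^k+(1-f_\mu)^k\bigr]$ together with Hausdorff's theorem correctly determines the symmetrized pushforward $\nu'_\mu$.

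The difficulty is that step (4), which you yourself flag as the main technical obstacle, does not go through by either of the routes you sketch, and I believe the gap is substantial. Working out $\nu'_0$ explicitly from the three cases, one finds that it is an absolutely continuous measure on $[0,1]$ with density $1/2$ on $[0,1/4]\cup[3/4,1]$ and $3/2$ on $[1/4,3/4]$; in particular it has no atoms and no singular concentration at $\{0,1\}$, only a bounded positive density there. So the constraint ``$\nu'_\mu=\nu'_0$'' gives you no obvious leverage toward an atom of $\mu$: many non-atomic $\mu$ (any measure supported on a curve, say) have $\nu'_\mu$ with positive density near the endpoints, so ``positive mass arbitrarily close to $\{0,1\}$'' is not a special feature. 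The alternative route, a universal bound $\Prob_{\ell_\mu}[A_k]\geq c_0/(k+1)$, is simply false: taking $\mu$ to be the uniform measure on a disk, the chord through two random interior points is at distance $\rho$ from the centre with density $\propto(1-\rho^2)^{3/2}$, which gives $f_\mu$ a density $\sim(1-f)^{2/3}$ near the endpoints and hence $\Prob_{\ell_\mu}[A_k]\sim k^{-5/3}=o(1/k)$. To salvage this line of attack you would need a genuinely geometric argument showing that no admissible $\mu$ can produce exactly the profile $\nu'_0$ (for instance, exploiting the density jump at $1/4$ and $3/4$), and at that point you are back to doing geometry comparable in difficulty to what the paper does directly.
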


\noindent
The proof of \cref{t:norep} reveals that the
sequence~${(\ell_{\odot_{1/n}})}_{n \in \N}$ is in fact convergent,
so~$\ell_\odot$ is indeed an explicit example. Actually, an explicit
description (for instance based on the reformulation of~$\ell_{\odot}$ used in
the proof of \cref{l:manytriangles}) is possible, if tedious.

\medskip

Let us now turn our attention to a family of limits that has a particularly
nice realization space. Firstly, observe that nonsingular affine transforms
preserve order types. Thus, if~$\mu$ is a measure that does not charge any
line,~$f$ is a nonsingular affine transform, and~$\mu' = \mu \circ f^{-1}$ is
the push-forward of~$\mu$ by~$f$, then $\ell_{\mu'} = \ell_\mu$. More
generally, one can take~$f$ to be the restriction to~$\R^2$ of an ``adequate''
projective map (we spell out the meaning of ``adequate'' in
\cref{sec:spherical}). Our main contribution in this direction is a projective
rigidity result: we show that under natural measure theoretic conditions
on~$\mu$, the realization space of~$\mu$ is essentially a point.

\begin{theorem}\label{thm:proj}
  {Let~$\mu_1$ and~$\mu_2$ be two compactly supported measures
    of~$\mathbf{R}^2$ that charge no line and whose supports have
    non-empty interiors. If $\ell_{\mu_1}=\ell_{\mu_2}$, then there
    exists a projective transformation~$f$ such that $\mu_2\circ
    f=\mu_1$.}
\end{theorem}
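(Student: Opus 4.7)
The plan is to reconstruct from the hypothesis $\ell_{\mu_1}=\ell_{\mu_2}$ a projective transformation matching the two measures. The strategy has three stages: first extract from $\ell_\mu$ a projectively canonical invariant (the half-plane measure function); then use the non-empty interior hypothesis to pin down a ``projective frame'' inside each support; finally use the rigidity of projective transformations mapping one frame to another.

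First, since $\mu$ charges no line (by~\cref{l:lmu}), the sign array $\chi_{ijk}=\sign\det(X_j-X_i,X_k-X_i)$ obtained from i.i.d.\ samples $X_i\sim\mu$ is an exchangeable $\{\pm1\}$-valued array whose finite-dimensional distributions encode precisely $\ell_\mu$. An Aldous--Hoover-type disintegration of this array produces, for $\mu\otimes\mu$-almost every pair $(x,y)$, the conditional probability $h_\mu(x,y):=\mu(H^+(x,y))$ that a third independent sample falls in the open half-plane to the left of the oriented line through $x$ and $y$. The equality $\ell_{\mu_1}=\ell_{\mu_2}$ then forces the existence of a measure-preserving identification $\iota$ between $(\supp\mu_1,\mu_1)$ and $(\supp\mu_2,\mu_2)$ under which $h_{\mu_2}\circ(\iota\otimes\iota)=h_{\mu_1}$ almost everywhere; the function $h_\mu$ is the projectively canonical invariant I will use.

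Next, the non-empty interior hypothesis lets me select four points $p_1,\ldots,p_4$ in the interior of $\supp\mu_1$ in generic position (no three collinear). The four oriented lines through the pairs $p_ip_j$ cut the plane into at most seven open regions whose $\mu_1$-masses are all determined by $h_{\mu_1}$ on a full-measure subset. Setting $q_i=\iota(p_i)$, the points $q_i$ lie in $\supp\mu_2$ and produce identical partition data for $\mu_2$. Let $f$ be the unique projective transformation sending $q_i$ to $p_i$; then $f_\ast\mu_2$ and $\mu_1$ agree on the seven regions of the frame. To propagate the equality I would iterate the argument with a fifth point $x$ ranging over the interior of $\supp\mu_1$: the matching forces $f(\iota(x))=x$ for $\mu_1$-almost every $x$, so $f_\ast\mu_2$ and $\mu_1$ agree on an arbitrarily fine partition cut out by oriented lines through pairs of interior points. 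A standard monotone-class argument, together with the assumption of compact supports, then yields $f_\ast\mu_2=\mu_1$.

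The main obstacle is the first step, namely establishing that the disintegration yields a projectively canonical $h_\mu$ and an identification $\iota$ with the prescribed properties. The non-empty interior hypothesis is decisive here: without it, even measures supported on $1$-dimensional sets (as in the circle example of~\cref{t:norep}) admit non-projective equivalences, and the matching of projective frames in the second step fails for purely topological reasons. One really needs genuinely open neighbourhoods in $\supp\mu_i$ to ensure that perturbations of the frame keep $\iota$ well-defined and that projective transformations remain rigid enough to be pinned down by the data.
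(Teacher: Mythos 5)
You have the right skeleton --- extract a measure-preserving identification between the supports from the equality of limits, then argue that it is a projective transformation --- and you correctly flag the identification step as the crux. But the proposal has genuine gaps in both halves. In the first half, the invariant $h_\mu(x,y)=\mu(H^+(x,y))$ is strictly too weak: the seven regions of your frame are \emph{intersections} of half-planes, and their $\mu$-masses are not determined by the masses of the individual cutting half-planes, so $h_{\mu_2}\circ(\iota\otimes\iota)=h_{\mu_1}$ does not give the claimed matching. One must work with the full three-variable chirotope $\chi$ and establish $\chi(\iota(x),\iota(y),\iota(z))=\chi(x,y,z)$ for $\mu_1^3$-almost every triple, which is what the paper's kernel-rigidity machinery (Theorem~\ref{thm:dens-dist}, Proposition~\ref{prop:big}, Theorem~\ref{cor:pure}) supplies. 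Moreover, an Aldous--Hoover-type uniqueness statement only produces a measure-preserving coupling, not a point-to-point bijection between the supports; upgrading to a bijection requires the chirotope kernel to be twin-free and pure, and establishing those two properties is precisely where the non-empty-interior hypothesis first enters (Lemma~\ref{lem:notwin}, Lemma~\ref{lem:dw:eq}, Corollary~\ref{cor:chi-pur}).

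In the second half, the frame-and-fifth-point argument implicitly needs $\iota$ to preserve collinearity \emph{exactly}, not merely $\chi^\iota=\chi$ almost everywhere, and that upgrade is not automatic. The paper does it topologically: an aligned triple in $\supp\mu^3$ with one coordinate in the interior of the support is characterized as lying in the common closure of the positively and negatively oriented triples restricted to $\supp\mu^3$ (Lemma~\ref{lem:local-aligned}), and this local characterization is transported by the measure-preserving homeomorphism $\rho$ --- this is the concrete mechanism by which the interior hypothesis becomes ``decisive,'' something your proposal gestures at but does not supply. Without it, ``the matching forces $f(\iota(x))=x$ for $\mu_1$-a.e.\ $x$'' is unsupported, and the monotone-class step has nothing to propagate. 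Even once alignment preservation is available, the paper still has to do real work to identify $\rho$ with a projective map on a full quadrilateral (the recursive subdivision giving a dense ``grid'' fixed by $p^{-1}\circ\rho$) and then extend to all of $\supp\mu_1$ via line intersections; a generic monotone-class appeal does not substitute for that geometric argument.
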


\noindent

The converse of the statement of
Theorem~\ref{thm:proj} is not true: it can fail for instance if the line mapped to infinity
intersects the interior of~$\supp\mu$.
The approach we use to establish it actually
provides a necessary and sufficient condition, expressed in terms of
``spherical transforms'' as defined in Section~\ref{sec:spherical}.

\noindent
The third author conjectured~\cite{dJo17} that the condition on~$\mu$ can
be weakened.

\begin{conjecture}\label{c-remi}
  If~$\mu$ is a measure that charges no line and has support
 of Hausdorff dimension strictly greater than~$1$, then every
  measure that realizes~$\ell_{\mu}$ is projectively equivalent to~$\mu$.
\end{conjecture}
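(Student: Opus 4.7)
The plan is to mimic the proof of~\cref{thm:proj}, replacing its open-interior hypothesis by a geometric measure theoretic input tailored to~$\dim_{\mathcal{H}}(\supp\mu)>1$. The overall scheme should remain: first, recover a family of projective invariants of the measure from~$\ell_\mu$; second, show that those invariants pin down a projective transform carrying~$\mu_2$ onto~$\mu_1$. The new ingredient is that, in the absence of a two-dimensional test region, one must extract rigidity from the measure's radial statistics at generic base points rather than from a dense family of planar ``test windows.''

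First I would show that~$\ell_\mu$ determines, for $\mu$-almost every~$p\in\supp\mu$, the normalised radial push-forward~$\pi_p$ of~$\mu$ onto~$\mathbf{S}^1$ along directions based at~$p$: sampling triples, quadruples, and so on from~$\mu$ and recording the cyclic orientations of the remaining samples around~$p$ is an aggregation of order-type densities, so the joint law of~$(p,\pi_p)$ under~$p\sim\mu$ is extractable from~$\ell_\mu$. The key geometric input then is a Marstrand-type radial projection theorem, which guarantees that when~$\supp\mu_1$ has Hausdorff dimension~$s>1$, the measure~$\pi_p$ has positive Hausdorff dimension on~$\mathbf{S}^1$ for $\mu_1$-almost every~$p$. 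Fixing a $\mu_1$-generic triple~$(p_1,p_2,p_3)$ in general position together with its $\mu_2$-matched counterpart~$(p_1',p_2',p_3')$ identified through the common law of $(p,\pi_p)$, the candidate projective map~$f$ is then the unique one sending~$p_i'\mapsto p_i$; one argues that~$\mu_2\circ f=\mu_1$ globally by transporting radial distributions at further generic points, exactly as the spherical transform framework of~\cref{sec:spherical} is used under the stronger hypothesis.

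The main obstacle I anticipate is this last globalisation step. Even with matching laws of~$(p,\pi_p)$ across the two measures, a single global projective map may fail to exist when the support is genuinely fractal: the matching of generic triples is a~priori not consistent across different choices of triple, so the candidate~$f$ built from one frame need not agree with the one built from another. Overcoming this seems to require a quantitative form of the spherical-transform rigidity of~\cref{sec:spherical}, with an injectivity estimate that degrades smoothly as~$s\to1^+$, perhaps obtained by a Fourier decomposition of the transform and a Frostman energy estimate of order~$t<s$. The sharpness of the threshold~$s=1$ is suggested by the example~$\ell_\odot$ of~\cref{t:norep}, whose one-dimensional support already fails to admit any measure representation at all; any proof of~\cref{c-remi} must explain this sudden transition, which is where I expect the genuine difficulty to lie.
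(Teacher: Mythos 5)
The statement you are addressing is Conjecture~\ref{c-remi}, which the paper explicitly leaves open: it is attributed to the third author's thesis and the paper gives no proof of it. Your submission is therefore a research plan rather than a proof, and you say so yourself --- you identify the globalisation step as the place where you ``expect the genuine difficulty to lie'' and you do not claim to resolve it. Taken as a plan, the outline is reasonable: recovering the law of $(p,\pi_p)$ from order-type densities, invoking a Marstrand--Mattila type radial projection theorem to guarantee non-degeneracy of $\pi_p$ when $\dim_{\mathcal{H}}(\supp\mu)>1$, and then attempting to glue generic frames into a single projective map are natural steps that match the intuition motivating the conjecture. The obstruction you point to is exactly the one that matters: the density argument in the proof of Theorem~\ref{thm:proj} hinges on having an open quadrilateral $Q\subset\supp\mu$ to subdivide recursively, and on a fractal support nothing substitutes for it. Your suggestion of a quantitative energy estimate on the spherical transform is a plausible direction of attack, but as written it is a heuristic rather than an argument, so the conjecture remains open.

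One small misattribution: the example the paper cites to show that the hypothesis $\dim_{\mathcal{H}}(\supp\mu)>1$ cannot be dropped is not $\ell_\odot$ from Proposition~\ref{t:norep} but $\ell_\diamond$, the limit assigning probability~$1$ to every convex order type, which is realized by measures whose supports have arbitrary Hausdorff dimension in~$(0,1)$ and which are therefore pairwise projectively inequivalent. The limit $\ell_\odot$ illustrates a different phenomenon --- non-representability by any planar measure at all --- and thus speaks to the representation question rather than to the rigidity threshold.
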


\noindent
It is easy to see that the conditions in \cref{c-remi} are necessary. 
Recalling that~$\diamond_k$ is the order type of~$k$ points in convex position,
the limit of order types that for every~$k$ gives probability~$1$ to~$\diamond_k$
can be realized by measures of Hausdorff dimension in~$(0,1)$. In particular,
these are pairwise projectively inequivalent. More interestingly, we build a
limit of order types~$\ell_E$ presenting projective flexibility, and which is
interesting from the combinatorial geometry perspective: indeed,~$\ell_E$
is a nearly optimal lower bound for the aforementioned
problem of Erd\H{o}s, witnessing that $\log c_k=\Theta(-k^2)$.

\begin{theorem}\label{l:decay-LE}
There exists a limit of order types~$\ell_E$ that can be realized by a measure
of Hausdorff dimension~$s$ for any~$s \in (0,1)$ for which~$\ell_E(\diamond_k)
    = 2^{-\frac{k^2}8+\bigo{k \log k}}$. Furthermore,~$\ell_E$ cannot be realized by a
measure that is absolutely continuous with respect to the Lebesgue measure.
\end{theorem}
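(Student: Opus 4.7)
The plan is to construct $\ell_E$ as the combinatorial limit associated to a family of self-similar Cantor-type measures $\mu_s$ ($s\in(0,1)$), and to control $\ell_E(\diamond_k)$ through a hierarchical cup--cap analysis \`a la Erd\H{o}s--Szekeres. For fixed $s\in(0,1)$, choose an integer $N\ge 2$ and a contraction ratio $r=N^{-1/s}<1$. Start from a seed of $N$ points placed on a very flat convex arc, and iteratively replace every point of the current set by an $r$-scaled copy of the seed centred at that point, alternating the orientation of the arc (cup versus cap) between successive levels. If the seed is flat enough, the resulting iterated function system satisfies Moran's open set condition, hence the natural self-similar probability measure $\mu_s$ supported on its attractor has Hausdorff dimension exactly $s$; furthermore $\mu_s$ charges no line, so \cref{l:lmu} guarantees that $\ell_{\mu_s}$ is a limit of order types.

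To estimate $\ell_{\mu_s}(\diamond_k)$, encode a $k$-tuple sampled from $\mu_s$ by $k$ infinite branches of the $N$-ary tree describing the construction. At each level $h$ the branches split into at most $N$ clusters of sizes $k_1,\dots,k_N$ with $\sum k_i=k$; the between-cluster geometry is a scaled copy of the seed arc (a cup or a cap according to the parity of $h$), and the within-cluster geometry is itself a recursive copy of the construction. For the $k$-tuple to be in convex position, the classical cup--cap lemma forces a strong alternation along the descent path: each level contributes to either the upper or the lower chain of the eventual convex polygon, so the tuple splits non-trivially at only $\Theta(k)$ levels, and the admissible splits are heavily constrained. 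Unwinding the resulting recursion yields $\ell_{\mu_s}(\diamond_k) = 2^{-k^2/8 + O(k \log k)}$, the constant $1/8$ emerging from the optimal balance between cup and cap contributions along the path. Since the combinatorial distribution depends only on the tree and not on the numerical value of $r$, one has $\ell_{\mu_s} = \ell_{\mu_{s'}}$ for all $s, s' \in (0,1)$; this common limit is $\ell_E$.

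For the last assertion, suppose for contradiction that some $\mu \ll \mathrm{Leb}$ realises $\ell_E$. By the Lebesgue density theorem there is an open square $S$ with $\mu(S) > 0$ on which $\mu$ has density bounded below and above by positive constants, so that the distribution of $k$ $\mu$-samples conditioned to lie in $S$ is within a factor $C^k$ of the uniform distribution on $S^k$. Valtr's computation of the probability that $k$ independent uniform points in a square are in convex position provides a lower bound of $2^{-O(k \log k)}$, whence
\[
\ell_\mu(\diamond_k) \ \geq\ \mu(S)^k \cdot 2^{-O(k \log k)} \ =\ 2^{-O(k \log k)},
\]
contradicting $\ell_E(\diamond_k) = 2^{-k^2/8 + O(k \log k)}$ for all $k$ sufficiently large.

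The main obstacle is the sharp two-sided estimate $\ell_{\mu_s}(\diamond_k) = 2^{-k^2/8 + O(k \log k)}$. The upper bound requires a level-by-level Erd\H{o}s--Szekeres accounting tight enough to produce the constant $1/8$; the matching lower bound requires exhibiting explicit hierarchical configurations that genuinely lie in convex position, in quantity matching the upper bound up to the $O(k \log k)$ slack. Verifying that the scale parameter $r$ does not affect $\ell_{\mu_s}$ is also slightly delicate: one must show that for every $r$ small enough, the relative position of any two points with different ancestry is governed by the tree structure alone.
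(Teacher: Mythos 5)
Your construction departs from the paper's in a way that creates a genuine gap. You build $\mu_s$ from a \emph{similarity} (isotropic) IFS and invoke Moran's open set condition to obtain Hausdorff dimension $\log N/\log(1/r)=s$. But similarities preserve slopes, so the chords inside a level-$h$ cluster sweep exactly the same slope range as the chords between clusters, at every $h$. Consequently the chirotope of a sampled triple is \emph{not} determined by the nesting structure of the tree alone: writing $v-u=r^m\Delta+O(r^{m+1})$ and $w-v=r^n\Delta'+O(r^{n+1})$ for the two leading split depths $m<n$, the orientation determinant is $(v-u)\times(w-v)=r^{m+n}\bigl(\Delta\times\Delta'+O(r)\bigr)$, whose sign depends on the \emph{slopes} of the two seed chords $\Delta,\Delta'$, not merely on which split occurs first. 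In the binary case $N=2$ the two-point seed has a single chord slope, $\Delta\times\Delta'$ vanishes to leading order, and the construction degenerates; for $N\geq3$ one gets a different, more intricate, $N$-dependent chirotope. So the central claim ``the combinatorial distribution depends only on the tree and not on the numerical value of $r$'' is not available as stated, and the ``alternating cup/cap'' rule is in any case not the chirotope of the paper's $\ell_E$, which uses a \emph{fixed} rule on $\{0,1\}^{\mathbf N}$ (orientation determined by whether $|u\wedge v|<|v\wedge w|$). The paper achieves the needed tree-determination by using an \emph{anisotropic} affine IFS, vertical contraction $b$ strictly smaller than horizontal contraction $a$, under the constraint $b\leq(1-2a)(1-2b)a$, so clusters become genuinely flatter as one descends; Moran's similarity formula is then unavailable, and instead the attractor is bi-Lipschitz to its horizontal projection (a one-dimensional Cantor set of dimension $\log 2/\log(1/a)$, because $b<a$ makes vertical gaps negligible), which sweeps all of $(0,1)$ as $a$ runs over $(0,1/2)$.

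The two-sided bound $\ell_E(\diamond_k)=2^{-k^2/8+O(k\log k)}$, which you yourself flag as the main obstacle, is not sketched at the level of detail that would make it reliable, and for your $N$-ary alternating chirotope it is not clear that the constant $1/8$ would even come out the same. The paper instead exploits the explicit binary chirotope to derive a clean recursion for the probability $f(s)$ that $s$ i.i.d.\ samples form a cup, namely $f(s)=\tfrac{s}{2^s-2}\,f(s-1)$, giving $f(s)=2^{-s^2/2+O(s\log s)}$; combining this with the Erd\H{o}s--Szekeres cup--cap split $\ell_E(\diamond_k)\leq\binom{k}{\lfloor k/2\rfloor}\cdot 2 f(\lfloor k/2\rfloor)$ yields the exponent. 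Your argument for the last assertion (no absolutely continuous realization) via Valtr plus a density point is essentially the paper's Lemma~\ref{l:sylvester}; just be careful that the Radon--Nikodym density of $\mu$ need not be bounded below on any open square. One should instead take a Lebesgue density point of a set $\{d\mu/d\mathrm{Leb}>c\}$ of positive measure, rather than assume an open square on which the density function itself is bounded above and below.
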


As in the rigidity result, the relation to the Hausdorff dimension seems
fundamental: we observe that regular measures present a very different
asymptotic behavior with respect to the density of~$\diamond_k$.

\section{Flag algebras}

\subsection{Reformulation of limits}

Order types can be understood as equivalence classes of chirotopes
under the action of permutations (see below). As such, they form an
example of a \emph{model} in the language of
Razborov~\cite{Razborov:2007}, and the theory of limits of order types
is a special case of Razborov's work. This section provides a
self-contained presentation of the probabilistic and algebraic
reformulations of limits of order types.

\subsubsection{Probabilistic characterization}\label{sub-p}

Let~${(\ot_n)}_{n \in \N}$ be a sequence of order types converging to
a limit~$\ell$.  Let~$\ot \in \sot$, let~$k \ge |\ot|$ and let~$n_0$
be large enough so that $|\ot_n| \ge k$ for every~$n\ge n_0$. A simple
conditioning argument yields that for any~$n \ge n_0$,
\begin{equation}\label{eq:new}
      p(\ot,\ot_n) = \sum_{\ot' \in \sot_k}
p(\ot,\ot')p(\ot',\ot_n).
\end{equation}
Indeed, the probability that a random sample realizes~$\ot$ is the
same if we sample uniformly~$|\ot|$ points from a realization
of~$\ot_n$, or if we first sample~$k$ points uniformly from that
realization and next uniformly select a subset of~$|\ot|$ of these~$k$
points. It follows that any limit~$\ell$ of order types satisfies the
following \emph{conditioning identities}:
\begin{equation}\label{e:cond}
  \forall \ot \in \sot, \forall k \ge |\ot|, \quad \ell(\ot) = \sum_{\ot' \in \sot_k} p(\ot,\ot')\ell(\ot').
\end{equation}
As one notices, \cref{eq:new} yields that the conditioning identities
are equivalent to the (seemingly weaker) condition
\begin{equation}\label{e:cond1}
      \forall\ot\in\sot,\quad \ell(\ot)=\sum_{\ot'\in\sot_{|\ot|+1}}p(\ot,\ot')\ell(\ot').
\end{equation}

However, a stronger condition than~\cref{e:cond} is needed to
characterize limits of order types, as we illustrate in
Example~\ref{ex-stronger}.  The \emph{split
  probability}~$p\pth{\ot',\ot'';\ot}$, where~$\ot'$, $\ot''$
and~$\ot$ are order types, is the probability that a random partition
of a point set realizing~$\ot$ into two classes of sizes~$|\ot'|$
and~$|\ot''|$, chosen uniformly among all such partitions, produces
two sets with respective order types~$\ot'$ and~$\ot''$. (In
particular $p\pth{\ot',\ot'';\ot}=0$ if $|\ot| \neq |\ot_1|+|\ot_2|$.)
We provide a detailed proof of the following proposition, but the
reader familiar with the topic already sees the corresponding result
for dense graph limits~\cite[Theorem~2.2]{LoSz06}.

\begin{proposition}\label{p:distrib}
   A function~$\ell \colon \sot \to \R$ is a limit of order types if and
   only if 
\begin{equation}\label{eq:spm}
  \forall \ot', \ot'' \in \sot, \qquad \ell(\ot')\ell(\ot'')=\sum_{\ot\in
    \sot_{|\ot'|+|\ot''|}}p(\ot',\ot'';\ot)\ell(\ot),
\end{equation}
      and for every~$n \in \N$,
      the restriction~$\ell_{|\sot_n}$ is a probability distribution on~$\sot_n$.
\end{proposition}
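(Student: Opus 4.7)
The plan is to establish the characterization in both directions. For necessity, assume $\ell$ is the limit of a convergent sequence ${(\ot_n)}$. The first condition is immediate from summing $\sum_{\ot \in \sot_n} p(\ot, \ot_m) = 1$ (valid for all $m$ with $|\ot_m| \geq n$) and passing to the limit $m \to \infty$. The split identity rests on the following asymptotic factorization: if $A$ and $B$ are two independent uniform subsets of sizes $|\ot'|$ and $|\ot''|$ drawn from a realization of $\ot_n$ of size $N$, then $\Prob[A \cap B = \emptyset] = 1 - O(1/N)$, and conditionally on disjointness the union $A \cup B$ is a uniform sample of size $|\ot'| + |\ot''|$. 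This yields
\begin{equation*}
      p(\ot', \ot_n)\, p(\ot'', \ot_n) \;=\; \sum_{\ot \in \sot_{|\ot'|+|\ot''|}} p(\ot', \ot''; \ot)\, p(\ot, \ot_n) \;+\; O(1/|\ot_n|),
\end{equation*}
and \cref{eq:spm} follows upon letting $n \to \infty$.

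For sufficiency, suppose $\ell$ satisfies both conditions. We first derive the conditioning identity \cref{e:cond}: given $\ot \in \sot$ and $k \geq |\ot|$, applying \cref{eq:spm} to each pair $(\ot, \ot'')$ with $\ot'' \in \sot_{k - |\ot|}$ and summing over $\ot''$ produces, using $\sum_{\ot''} \ell(\ot'') = 1$ on the left and the elementary identity $\sum_{\ot''} p(\ot, \ot''; \tilde\ot) = p(\ot, \tilde\ot)$ on the right, the relation $\ell(\ot) = \sum_{\tilde\ot \in \sot_k} p(\ot, \tilde\ot)\, \ell(\tilde\ot)$. We then build a sequence realizing $\ell$ by drawing, for each $n$, an order type $\ot_n$ independently according to the distribution $\ell_{|\sot_n}$. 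The conditioning identity gives $\Ex[p(\ot, \ot_n)] = \ell(\ot)$ for every $n \ge |\ot|$. Combining the asymptotic factorization from the necessity direction (applied to a realization of $\ot_n$) with \cref{eq:spm} for the pair $(\ot, \ot)$ yields $\Ex[p(\ot, \ot_n)^2] = \ell(\ot)^2 + O(1/n)$, and hence $\Var(p(\ot, \ot_n)) = O(1/n)$. Along a subsequence such as $n_k = k^3$, Chebyshev's inequality combined with Borel--Cantelli provides almost-sure convergence $p(\ot, \ot_{n_k}) \to \ell(\ot)$ for each fixed $\ot$. Since $\sot$ is countable, intersecting these almost-sure events over every $\ot \in \sot$ produces a single deterministic sample path along which ${(\ot_{n_k})}_k$ converges to $\ell$.

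The main obstacle is establishing the uniform error in the factorization $p(\ot', \tilde\ot)\, p(\ot'', \tilde\ot) = \sum_\ot p(\ot', \ot''; \ot)\, p(\ot, \tilde\ot) + O(1/|\tilde\ot|)$ with an implicit constant that depends only on $|\ot'|$ and $|\ot''|$ and not on $\tilde\ot$; this uniformity is precisely what makes the error survive the expectation over the random sample $\ot_n$ and drives the variance bound. One must also be mindful of the ordering convention used to define $p(\ot', \ot''; \ot)$ when $|\ot'| = |\ot''|$, but this is handled by being consistent with the definition that makes the right-hand side of \cref{eq:spm} symmetric in $(\ot', \ot'')$.
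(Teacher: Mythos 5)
Your proposal is correct and follows essentially the same strategy as the paper: in the forward direction you bound the gap between $p(\ot',\ot_n)p(\ot'',\ot_n)$ and the split sum by the probability that the two sampled subsets intersect, and in the backward direction you sample random order types from $\ell$, compute $\Ex[p(\ot,\cdot)]=\ell(\ot)$ and $\Var(p(\ot,\cdot))=O(1/|\ot_n|)$ via the conditioning and split identities, and finish with Chebyshev plus Borel--Cantelli on a subsequence along which the variance is summable. The only cosmetic differences are that the paper obtains~\eqref{e:cond} from~\eqref{eq:spm} through the single-extra-point case~\eqref{e:cond1} rather than by summing over all $\ot''\in\sot_{k-|\ot|}$, and samples an order type of size $n^2$ directly rather than taking the subsequence $n_k=k^3$.
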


\noindent
Before establishing \cref{p:distrib}, let us first point out that the product
condition~\eqref{eq:spm} implies the conditioning condition~\eqref{e:cond},
as is seen by taking for~$\ot''$ the (unique) order type of size~$1$.

\begin{proof}[Proof of \cref{p:distrib}]
We start by establishing the direct implication.
The fact that $\ell_{|\sot_n}$ is a probability distribution on~$\sot_n$ follows
from the definition of a limit. As for~\cref{eq:spm}, fix two order types~$\ot'$ and~$\ot''$ in~$\sot$.
Let~$\ot_n$ be a random order type sampled from~$\ell_{|\sot_n}$ where~$n \ge |\ot'|+|\ot''|$. Let
\[ \alpha_n = p(\ot',\ot_n)p(\ot'',\ot_n) \qquad \text{and} \qquad
\beta_n = \sum_{\ot\in
  \sot_{|\ot'|+|\ot''|}}p(\ot',\ot'';\ot)p(\ot,\ot_n).\]
Now, fix some point set~$P$ with order type~$\ot_n$.
By the definition, the value~$p(\ot,\ot_n)$ is the
probability that~$|\ot|$ points sampled uniformly from~$P$ have order
type~$\ot$.
Now, on one hand,~$\alpha_n$ equals the probability
that two independent events both happens: (i) that a set~$P'$ of~$|\ot'|$ random points chosen uniformly from~$P$ has order type~$\ot'$,
and (ii) that another set~$P''$ of~$|\ot''|$ random points
chosen uniformly from~$P$ has order type~$\ot''$.  On the other hand,
observe that~$\beta_n$ equals the probability that~(i) and~(ii) happen
\emph{and} that~$P'$ and~$P''$ are disjoint. The
difference~$|\alpha_n-\beta_n|$ is therefore bounded from above by the
probability that~$P'$ and~$P''$ intersect.  Bounding from above the
probability that~$P'$ and~$P''$ have an intersection of one or more
elements by the expected size of~$P' \cap P''$ yields that
\begin{equation}\label{eq:ab}
  \Bigl|p(\ot',\ot_n)p(\ot'',\ot_n) \;- \negthickspace\negthickspace\sum_{\ot\in
    \sot_{|\ot'|+|\ot''|}}\negthickspace p(\ot',\ot'';\ot)p(\ot,\ot_n)\Bigr| \le \Ex\left(|P' \cap P''|\right) = \frac{|\ot'||\ot''|}{|\ot_n|}.
\end{equation}
Taking~$n \to \infty$ in~\eqref{eq:ab} we see that~$\ell$ satisfies Equation~\eqref{eq:spm}.

Conversely, suppose that~$\ell$ satisfies both conditions.
For every integer~$n$, we pick a random order type~$r_n$ of size~$n^2$ according to~$\ell_{|\sot_{n^2}}$. We assert that
      \[ \forall \ot \in \sot, \qquad \Prob\left(\lim_{n \to \infty} p(\ot,r_n)
      \ne \ell(\ot)\right) = 0.\]
Since the number of order types is countable, we conclude that
the random sequence~${(r_n)}_{n \in \N}$ is convergent and has limit~$\ell$
with probability~$1$.

It remains to prove the assertion. Fix some~$\ot \in \sot$ and assume that $n \ge |\ot|$. Then
  \[ \Ex[p(\ot, r_n)] = \sum_{\ot'\in\sot_{n^2}}
      \Prob(r_n = \ot')p(\ot,\ot') = \sum_{\ot'\in\sot_{n^2}}
  \ell(\ot')p(\ot,\ot')=\ell(\ot),\]
and
    \[ \Var(p(\ot,r_n))= \Ex[p{(\ot, r_n)}^2] - {\Ex[p(\ot, r_n)]}^2 =
    \sum_{\ot' \in \sot_{n^2}} {p(\ot,\ot')}^2\ell(\ot') - {\ell(\ot)}^2.\]
By~\cref{eq:spm},
    \[ \Var(p(\ot,r_n))= \sum_{\ot' \in \sot_{n^2}} {p(\ot,\ot')}^2\ell(\ot') - \sum_{\ot''\in\sot_{2|\ot|}} p(\ot,\ot;\ot'')\ell(\ot'').\]
Since $2|\ot| \le |\ot|^2 \le n^2 = |r_n|$, \cref{e:cond} yields that
$\ell(\ot'') = \sum_{\ot' \in \sot_{n^2}} p(\ot'',\ot')\ell(\ot')$.
So we deduce that
  \[ \Var(p(\ot,r_n))= \sum_{\ot' \in \sot_{n^2}}\pth{{p(\ot,\ot')}^2-
    \sum_{\ot''\in\sot_{2|\ot|}}
    p(\ot,\ot;\ot'')p(\ot'',\ot')}\ell(\ot').\]
We observe that Inequality~\eqref{eq:ab} obtained above is valid in
general, and therefore
  \[ \Var(p(\ot,r_n)) \leq \sum_{\ot' \in \sot_{n^2}}
  \frac{|\ot|^2}{n^2} \ell(\ot') =  \frac{|\ot|^2}{n^2}.\]
By Chebyshev's inequality it thus follows that for any positive~$\varepsilon$,
  \[\Prob(|p(\ot, r_n)-\ell(\ot)|>\varepsilon)\leq \frac{|\ot|^2}{\varepsilon^2 n^2}.\]
  Therefore, for any $\ot$ and~$\varepsilon>0$, the sum~$\sum_{n \ge 1} \Prob(|p(\ot,
    r_n)-\ell(\ot)|>\varepsilon)$ is finite and hence the Borel-Cantelli lemma
  implies that with probability~$1$, only finitely many of
  the events~${\{|p(\ot, r_n)-\ell(\ot)|>\varepsilon\}}_n$ happen.
  Consequently, it holds with probability~$1$ that $\lim_{n \to \infty} p(\ot, r_n) = \ell(\ot)$.
\end{proof}

\cref{p:distrib} provides many examples of limits of order types.
\begin{coro}\label{l:lmu}
  Let~$\mu$ be a finite Borel measure over~$\R^2$. The map~$\ell_\mu\colon \ot
  \in \sot \mapsto p(\ot,\mu)$ is a limit of order types if and only
  every line is negligible for~$\mu$.
\end{coro}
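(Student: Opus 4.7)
The plan is to deduce the corollary from \cref{p:distrib}. Writing $\tilde\mu=\mu/\mu(\R^2)$, the value $\ell_\mu(\ot)$ is by definition the probability that $|\ot|$ independent $\tilde\mu$-samples form a realisation of $\ot$; in particular this event requires the samples to be pairwise distinct with no three aligned. Both directions of the equivalence will hinge on whether this event exhausts the sample space.

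For the ``only if'' direction I argue contrapositively: if some line $L$ satisfies $\mu(L)>0$, then three i.i.d.\ $\tilde\mu$-samples all fall on $L$ with probability at least ${(\mu(L)/\mu(\R^2))}^3>0$, and in that event no order type is assigned. Hence $\sum_{\ot\in\sot_3}\ell_\mu(\ot)<1$, so $\ell_\mu|_{\sot_3}$ is not a probability distribution. Since any limit of order types is a pointwise limit of probability distributions on the \emph{finite} set $\sot_3$, it must restrict to a probability distribution there; this rules out $\ell_\mu$ from being a limit.

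For the converse, I would assume every line is $\mu$-negligible and verify the two hypotheses of \cref{p:distrib}. First, $\mu$ is atom-free, since an atom at $p$ would force every line through $p$ to have positive measure. Given $n$ i.i.d.\ samples $X_1,\dots,X_n$ from $\tilde\mu$, Fubini yields $\Prob(X_i=X_j)=0$ for $i\neq j$ and $\Prob(X_i,X_j,X_k\text{ collinear})=\int\tilde\mu(L_{x,y})\diff\tilde\mu(x)\diff\tilde\mu(y)=0$ for pairwise distinct $i,j,k$, where $L_{x,y}$ denotes the line through $x\neq y$; a union bound over pairs and triples puts the samples in general position almost surely, so $\sum_{\ot\in\sot_n}\ell_\mu(\ot)=1$. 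For the product identity, sample $n'+n''$ points i.i.d.\ from $\tilde\mu$, with $n'=|\ot'|$ and $n''=|\ot''|$: by independence the probability that the first $n'$ realise $\ot'$ and the last $n''$ realise $\ot''$ equals $\ell_\mu(\ot')\ell_\mu(\ot'')$, while by exchangeability the same probability equals that of a uniformly random split of the $n'+n''$ samples inducing $(\ot',\ot'')$; conditioning on the joint order type $\ot$ and using the definition of the split probability rewrites this as $\sum_{\ot\in\sot_{n'+n''}}p(\ot',\ot'';\ot)\ell_\mu(\ot)$. Equating the two expressions gives \eqref{eq:spm}, and \cref{p:distrib} then yields that $\ell_\mu$ is a limit. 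The computations are all routine; the only point requiring care is the Fubini argument that line-negligibility truly places the samples in general position almost surely.
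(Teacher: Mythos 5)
Your proof is correct and follows the same overall strategy as the paper's: reduce to \cref{p:distrib}, handle the ``only if'' direction via the order type of size three, and handle the converse by verifying the hypotheses of \cref{p:distrib}. The ``only if'' directions are essentially identical (you phrase it contrapositively, the paper directly; both boil down to the observation that a positive-measure line would give aligned triples positive probability).

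In the converse direction you are in fact \emph{more careful} than the paper. The paper's own proof verifies that $\ell_\mu|_{\sot_n}$ is a probability distribution and that the \emph{conditioning} identity~\eqref{e:cond} holds, and then invokes \cref{p:distrib}. But \cref{p:distrib} actually requires the strictly stronger \emph{product} identity~\eqref{eq:spm}; the paper itself notes, and illustrates in Example~\ref{ex-stronger}, that~\eqref{e:cond} alone does not suffice. Your argument verifies~\eqref{eq:spm} directly, by sampling $|\ot'|+|\ot''|$ i.i.d.\ points, factoring by independence, and then using exchangeability to convert the ``first $n'$ / last $n''$'' split into a uniformly random split and condition on the joint order type. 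This is exactly the supplementary step needed to make the reduction to \cref{p:distrib} airtight, so your proof closes a small gap in the paper's exposition. The atom-freeness observation and the Fubini computation showing that i.i.d.\ samples are in general position almost surely are likewise correct and complete.
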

\begin{proof}
Assume that~$\ell_\mu$ is a limit of order types and let~${(\ot_n)}_{n \in \N}$
be a sequence converging to~$\mu$. Let~$\therefore$ be the order type of
size~$3$. We have
  \[ p(\therefore, \mu) = \ell_\mu(\therefore) = \lim_{n \to
    \infty}p(\therefore, \ot_n) = 1,\]
  so three random points chosen independently from~$\frac1{\mu(\R^2)}\mu$ are aligned with probability~$0$,
      and consequently every line is negligible for~$\mu$.
  
  Conversely, assume that~$\mu$ is a measure for which every line is
  negligible. For every integer~$n \ge 3$ the restriction of~$\ell_\mu$ to~$\sot_n$
      is a probability distribution. Moreover, for every
  order type~$\ot\in\sot$ and every integer~$m>|\ot|$, we have
\[ \ell_\mu(\ot)=\sum_{\ot'\in \sot_{m}}p(\ot,\ot')\ell_\mu(\ot')\]
      by sampling~$m$ points from~$\mu$ and sub-sampling~$|\ot|$
      points among them uniformly.
  \cref{p:distrib} then implies that~$\ell_\mu$ is a limit of
  order types.
\end{proof}

We conclude this section with a simple example of a function~$\ell$ that is
not a limit of order types, and yet satisfies the conditioning
identities and restricts on every~$\sot_n$ to a probability
distribution.

\begin{example}\label{ex-stronger}
Let~$\ell_\circ$ be the limit where convex order types have
probability~$1$. Let $\ell_\odot$ be the limit defined before
\cref{t:norep} on page~\pageref{t:norep}. Set~$\ell$ to be~$\frac12
\pth{\ell_\circ + \ell_\odot}$.

First, note that if~$\ell_1$ and~$\ell_2$ are two limits of order
types and~$\ell_3$ is any convex combination of~$\ell_1$ and~$\ell_2$,
then the conditioning identities for~$\ell_1$ and~$\ell_2$ ensure that:
\[ \forall \ot\in\sot, \forall k>|\ot|, \quad \ell_3(\ot) = \sum_{\ot' \in \sot_{k}} p(\ot,\ot')\ell_3(\ot').\]
In particular, our function $\ell$ satisfies the conditioning
identities. Similarly, it is immediate to check that for every $n$,
the restriction of $\ell$ to $\sot_n$ is a probability distribution on
$\sot_n$.

We show that~$\ell$ does not satisfy~\eqref{eq:spm} by proving that
\[ \ell\pth{\flagFourTwo} \ell\pth{\bullet} = \frac{1}{32} \qquad \hbox{and} \qquad  \sum_{|\ot| = 5} p\pth{\flagFourTwo,\bullet\ ; \ot} \ell\pth{\ot} = \frac3{64}.\]
To establish these equalities, we compute certain values
of~$\ell_\odot$, which essentially boils down to computing two
probabilities: (i) that the number of points on the outer circle
equals that of the convex hull of the order type considered, and~(ii)
that the convex hull of these ``outer points'' contains the centre of
the circle.

To compute~(ii) requires to compute the probability that $k$ points
chosen uniformly on a circle have the circle's center in their convex
hull. Let us condition on the $k$ lines formed by the points and the
center of the circle (the lines being almost surely pairwise
distinct). Each point is selected uniformly from the two possible,
antipodal, positions on the corresponding line, and the initial choice
of the lines does not matter. So the probability reformulates as:
given $k$ pairs of antipodal points on the circle, how many of the
$2^k$ $k$-gons formed by one point from each pair contain the center?
For $k=3$ the answer is~$2$ out of~$8$ configurations, while for $k=4$
the answer is~$8$ out of~$16$ configurations. We thus infer that
\[ \ell_\odot\pth{\flagFourTwo} = 4\frac1{2^4}\frac14 = \frac1{16},  \quad \quad \qquad \ell_\odot\pth{\flagFiveThree} = \binom52\frac1{2^5}\frac14 = \frac{5}{64},\]
and
\[ \ell_\odot\pth{\flagFiveTwo} = 5\frac1{2^5} \frac12 = \frac5{64}.
\]

\noindent
It follows, on one hand, that
     \[ \ell\pth{\flagFourTwo} \ell\pth{\bullet} = \ell\pth{\flagFourTwo} = \frac12 \pth{\ell_\circ\pth{\flagFourTwo} + \ell_\odot\pth{\flagFourTwo}} = \frac12 \pth{0+\frac1{16}} = \frac1{32}.\]
and on the other hand, that
\[\begin{aligned}
\sum_{|\ot| = 5} p\pth{\flagFourTwo,\bullet\ ; \ot} \ell\pth{\ot} &=  \frac45 \ell\pth{\flagFiveThree} + \frac25 \ell\pth{\flagFiveTwo} + 0 \ell\pth{\flagFiveOne} \\
& = \frac25 \ell_\odot\pth{\flagFiveThree} + \frac15 \ell_\odot\pth{\flagFiveTwo}\\
    & = \frac25 \times \frac5{64} + \frac15 \times \frac5{64}\\
& = \frac3{64} \neq \ell\pth{\flagFourTwo} \ell\pth{\bullet}.
\end{aligned}\]
\end{example}

\subsubsection{Algebraic characterization}\label{sub-a}

Recall that by~\eqref{e:cond} every limit~$\ell$ of order types satisfies
\[
  \forall \ot \in \sot, \forall k \ge |\ot|, \quad \ell(\ot) = \sum_{\ot' \in \sot_k} p(\ot,\ot')\ell(\ot').
\]
Now, let~$\R\sot$ be the set of all finite formal linear combinations
of elements of~$\sot$ with real coefficients and consider the quotient
vector space
\[ \A = \R\sot/\Kern \qquad \text{where} \qquad \Kern =
\vect\Bigl\{\ot-\sum_{\ot' \in \sot_{|\ot|+1}}p(\ot,\ot')\ot' : \ot
  \in \sot\Bigr\}.\]
We define a product on~$\sot$ as follows:
\begin{equation}
\label{eq:flagproduct}
\forall \ot_1,\ot_2 \in \sot, \qquad  \ot_1 \times \ot_2 = \sum_{\ot \in \sot_{|\ot_1|+|\ot_2|}}
p(\ot_1,\ot_2;\ot)\ot,
\end{equation}
and we extend it linearly to~$\R\sot$. This extension is compatible with
the quotient by~$\Kern$ and therefore
turns~$\A$ into an algebra~\cite[Lemma 2.4]{Razborov:2007}.

An algebra homomorphism from~$\A$ to~$\R$ is \emph{positive} if
it maps every element of~$\sot$ to a non-negative real, and we define~$\Hom^+(\A, \R)$
to be the set of positive algebra homomorphisms from~$\A$ to
$\R$. Observe that any algebra homomorphism sends~$\cdot$, the
order-type of size one, to the real~$1$ as~$\cdot$ is the neutral element
for the product of order types.

\begin{proposition}[{\cite[Theorem~3.3b]{Razborov:2007}}]\label{p:hom}
  A map~$f \colon \sot \to \R$ is a limit of order types if and only if its
  linear extension is compatible with the quotient by~$\Kern$ and
  defines a positive homomorphism from~$\A$ to~$\R$.
\end{proposition}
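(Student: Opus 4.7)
The strategy is to use \cref{p:distrib} to translate between ``limit of order types'' and the two algebraic conditions, treating the two directions separately. The key observation is that ``compatibility of the linear extension with the quotient by~$\Kern$'' unpacks, by definition of~$\Kern$, as the conditioning identity~\eqref{e:cond1}, namely $f(\ot)=\sum_{\ot' \in \sot_{|\ot|+1}} p(\ot,\ot')f(\ot')$ for every~$\ot \in \sot$.

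For the forward direction, assume $f$ is a limit of order types. Compatibility with~$\Kern$ is then just~\eqref{e:cond1}, which follows from~\eqref{e:cond} for $k=|\ot|+1$ as already noted. The induced linear map $\bar{f} \colon \A \to \R$ is positive because, by \cref{p:distrib}, $f$ restricted to each~$\sot_n$ is a probability distribution and in particular $f(\ot) \ge 0$ for every~$\ot \in \sot$. Multiplicativity on basis elements reads
\[ \bar{f}(\ot_1 \times \ot_2) = \sum_{\ot \in \sot_{|\ot_1|+|\ot_2|}} p(\ot_1,\ot_2;\ot)f(\ot)= f(\ot_1)f(\ot_2),\]
which is exactly identity~\eqref{eq:spm} from \cref{p:distrib}. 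Bilinearity of~$\times$ and linearity of~$\bar{f}$ then propagate multiplicativity to all of~$\A$.

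For the reverse direction, suppose $\bar f \colon \A \to \R$ is a positive algebra homomorphism obtained from the linear extension of some~$f \colon \sot \to \R$. Positivity gives $f(\ot) \ge 0$ for every~$\ot$, and multiplicativity applied to pairs of basis elements yields~\eqref{eq:spm} verbatim. It remains to show that $f|_{\sot_n}$ is a probability distribution for every~$n$. As remarked in the excerpt, the class of the order type~$\cdot$ of size one is the unit of~$\A$, so $\bar{f}([\cdot]) = f(\cdot) = 1$. Compatibility with~$\Kern$ gives~\eqref{e:cond1}, and iterating~\eqref{e:cond1} from~$\cdot$ produces~\eqref{e:cond} for $\ot=\cdot$ and every~$k\ge1$; since $p(\cdot,\ot')=1$ for every nonempty~$\ot'$, this yields $\sum_{\ot' \in \sot_n} f(\ot') = f(\cdot) = 1$ for every~$n\ge1$. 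Hence $f|_{\sot_n}$ is a probability distribution, and \cref{p:distrib} identifies $f$ as a limit of order types.

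The whole argument is a bookkeeping exercise once the two intermediate translations are identified: $\Kern$-compatibility $\Leftrightarrow$ conditioning identities, and multiplicativity $\Leftrightarrow$ the split-probability identity~\eqref{eq:spm}. The only point requiring a little care is the normalization step, where one must combine unitality of the algebra homomorphism with the iterated conditioning identity to recover that $f$ restricts to a probability distribution on each~$\sot_n$; but this is routine.
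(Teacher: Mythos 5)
Your proof is correct. The paper does not give its own proof of this statement, citing Razborov's Theorem~3.3b, which establishes the result in the general setting of flag algebras for arbitrary theories; your argument is a clean, self-contained derivation specialized to order types, with the paper's \cref{p:distrib} as the bridge. The translation you set up is exactly right: vanishing on the generators of $\Kern$ is precisely the conditioning identity~\eqref{e:cond1}; multiplicativity of the induced map on basis elements is precisely the split-probability identity~\eqref{eq:spm}; positivity is the nonnegativity half of ``probability distribution''; and the normalization $\sum_{\ot\in\sot_n}f(\ot)=1$ follows from $f(\cdot)=1$ together with the iterated conditioning identity. The one point worth flagging is that the statement implicitly takes ``homomorphism'' to mean unital homomorphism (equivalently, excludes the zero map), as the paper's remark that any homomorphism sends $\cdot$ to~$1$ makes explicit; without that convention the zero map would be a positive homomorphism compatible with~$\Kern$ that is not a limit of order types. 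Your use of that remark to obtain $f(\cdot)=1$ is therefore relying on a convention rather than a theorem, but since the paper adopts the same convention, this is not a gap. The approach buys a direct proof of the specialization to order types at the cost of relying on \cref{p:distrib}, whereas Razborov's cited theorem handles the general case once and for all.
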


We equip~$\A$ with a partial order~$\ge$, and write that $a_1 \ge a_2$
with~$a_1, a_2 \in \A$ if the image of~$a_1 - a_2$ under every
positive homomorphism is non-negative. The algebra~$\A$ allows us to
compute effectively with density relations that hold for \emph{every}
limit~$\ell$.

\begin{example}\label{ex:unlab}
  Let~$\cdot$ be the order type on one point,
  $\flagFourOne[4mm]$ and~$\flagFourTwo[4mm]$ the two order types of
  size four and~$\flagFiveOne[4mm]$, $\flagFiveTwo[4mm]$ and~$\flagFiveThree[4mm]$
      the three order types of size five, seen as elements of~$\A$. It follows
      from the definitions and from Equation~\eqref{e:cond} that
\begin{equation}\label{eq:flag1}
    \flagFourOne[4mm] = \flagFiveOne[4mm] + \frac35 \flagFiveTwo[4mm] + \frac15 \flagFiveThree[4mm] \quad \text{and} \quad \flagFiveOne[4mm] + \flagFiveTwo[4mm] + \flagFiveThree[4mm] = \cdot
\end{equation}
  Since for any limit of order types~$\ell$ we have~$\ell(\cdot)=1$,
  the above implies that
\[ \ell(\diamond_4) = \ell \pth{\frac15\flagFiveOne[4mm] + \frac15 \flagFiveTwo[4mm] + \frac15 \flagFiveThree[4mm]} +\ell \pth{\frac45\flagFiveOne[4mm] + \frac25 \flagFiveTwo[4mm]} \ge \frac15 \ell(\cdot) = \frac15 \]
  Using again Equation~\eqref{e:cond}, and the non-negativity of
  $\flagFiveThree[4mm]$ we obtain
  \[ \frac25 \flagFiveOne[4mm] \ge \flagFourOne[4mm] - \frac35 \ (\flagFiveOne[4mm] + \flagFiveTwo[4mm] + \flagFiveThree[4mm]) = \flagFourOne[4mm] - \frac35 \cdot \]
  and~$\ell(\diamond_5) \ge \frac52 \ell(\diamond_4)- \frac32$ for any
  limit of order types~$\ell$.
\end{example}

\subsubsection{Semi-definite method}

\cref{p:hom} allows to search for inequalities over~$\A$ by
semidefinite programming. Let us give an intuition of how this works
on an example. Here, we use the comprehensive list of all the order
types of size up to~$11$, which was made available by
Aichholzer~\footnote{\url{http://www.ist.tugraz.at/aichholzer/research/rp/triangulations/ordertypes/}}
based on his work with Aurenhammer and Krasser~\cite{aak-eotsp-01} on
the enumeration of order types. Throughout this paper, all non-trivial
facts we use without reference on order types of small size can be
traced back to that resource.

\begin{example}\label{ex:semidef}
  A simple (mechanical) examination of the~$6405$ order types of size~$8$ reveals that
      $p(\diamond_4,\ot) \ge 19/70$ for any~$\ot \in
  \sot_8$.  With Identity~\eqref{e:cond} this implies that
  $\flagFourOne[4mm] \ge 19/70 \ \cdot$ or equivalently that $c_4 \ge 19/70
  > 0.2714$.  Observe that for any~$C \in \A$ and any (linear
  extension of a) limit of order types~$\ell$ we have $\ell(C \times
    C)={\ell(C)}^2 \ge 0$ by~\cref{p:hom}.  We thus have at our
  command an infinite source of inequalities to consider to try and
  improve the above bounds. For instance, a tedious but elementary
  computation yields that
    \[ {\left( \frac6{25} \flagFourOne[4mm] -
    \frac{11}{125} \flagFourTwo[4mm] \right )}^2 +
  \frac{298819}{1093750}\sum_{\ot\in\sot_8} \ot = \sum_{\ot\in\sot_8}
  a_\ot \ot,\]
  where~$a_\ot \le p(\diamond_4,\ot)$ for every~$\ot \in \sot_8$. This
  implies that $\ell(\diamond_4) \ge 298819/1093750 > 0.2732$ for any
  limit of order types~$\ell$. The search for interesting combinations
  of such inequalities can be done by semidefinite programming.
\end{example}

\subsection{Improving the semidefinite method \emph{via} rooting and averaging}

The effectiveness of the semidefinite method for limits of graphs was
greatly enhanced by considering partially labelled graphs. We unfold
here a similar machinery, using some blend of order types and
chirotopes.
 
\paragraph{Partially labelled point sets, flags,  $\sigma$-flags and~$\A^\sigma$.}

A point set \emph{partially labelled} by a finite set~$\labelset$ (the
\emph{labels}) is a finite point set~$P$ together with some injective
map~$L\colon\labelset \to P$. It is written~$(P,\labelset,L)$ when we
need to make explicit the set of labels and the label map.
Two partially labelled point sets~$(P,\labelset,L)$ and~$(P',\labelset,L')$
\emph{have the same flag} if there exists a
bijection~$\phi\colon P \to P'$ that preserves both the orientation and the
labelling, the latter meaning that $\phi(L(i))=L'(i)$ for each~$i \in
\labelset$. The relation of having the same flag is an equivalence
relation, and \emph{a flag} is an equivalence class for this relation.
Again, we call any partially labelled point set a \emph{realization} of its
equivalence class, and the \emph{size}~$|\tau|$ of a flag $\tau$ is the
cardinality of any of its realizations.

A flag where all the points are labelled, \emph{i.e.} where
$|P|=|\labelset|$ in some realization $(P,\labelset,L)$, is a
\emph{$\labelset$-chirotope}. (When $\labelset = [k]=\{1,2,\dotsc,
k\}$ a $\labelset$-chirotope coincides with the classical notion of
chirotope.) Note that chirotopes correspond to types in the flag algebra terminology.
Discarding the non-labelled part of a flag~$\tau$ with
label set~$\labelset$ yields some $\labelset$-chirotope~$\sigma$
called the \emph{root} of~$\tau$. A flag with root~$\sigma$ is a
\emph{$\sigma$-flag} and by $\schi^\sigma$ we mean the set of~$\sigma$-flags.
The \emph{unlabelling} $\tau^\emptyset$ of a flag~$\tau$ with realization $(P,\labelset,L)$ is the order type of~$P$.

Let~$\labelset$ be a set of labels and~$\sigma$ a
$\labelset$-chirotope. We define densities and split probabilities for
$\sigma$-flags like for order types. Namely, let~$\tau$, $\tau'$ and~$\tau''$ be
$\sigma$-flags respectively realized by~$(P,\labelset,L)$, $(P',\labelset,L')$ and~$(P'',\labelset,L'')$.
The \emph{density} of~$\tau$ in~$\tau'$ is the probability that for a
random subset~$S$ of size~$|P|-|\labelset|$, chosen uniformly in~$P'
\setminus L'(\labelset)$, the partially labelled set~$(S \cup
L'(\labelset),\labelset,L')$ has flag~$\tau$. The \emph{split
  probability}~$p(\tau,\tau';\tau'')$ is the probability that for a
random subset~$S$ of size~$|P|-|\labelset|$, chosen uniformly in~$P''
\setminus L''(\labelset)$, the partially labelled sets~$(S \cup
L''(\labelset),\labelset,L'')$ and~$(P''\setminus S,\labelset,L'')$
have respective flags~$\tau$ and~$\tau'$.

We can finally define an algebra of~$\sigma$-flags as for order types.
We endow the quotient vector space
\[ \A^\sigma = \R\schi^\sigma/\Kern^\sigma \qquad \text{where} \qquad \Kern^\sigma =
\vect\Bigl\{\ot-\sum_{\ot' \in \schi^\sigma_{|\ot|+1}}p(\ot,\ot')\ot' : \ot
  \in \schi^\sigma\Bigr\}\]
with the linear extension of the product defined on~$\schi^\sigma$ by
$\tau \times \tau' = \sum_{\tau'' \in
  \schi^\sigma_{|\tau|+|\tau'|-|\sigma|}} p(\tau,\tau';\tau'')\tau''$.

\begin{example}\label{ex:lab}
    Here are a few examples to illustrate the notions we just introduced.
Letting~$\sigma$ be the unique $\labelset$-chirotope with~$|\labelset|=2$,
there are exactly seven $\sigma$-flags on four points, three with a convex hull of size~$4$ and four with
a convex hull of size~$3$. The densities of $\flagThreeLabTwo[6mm]$ into each of these seven $\sigma$-flags with~$4$ points indicated below.
\begin{align*}
    \label{eq:sigma1}
      p\left(\flagThreeLabTwo[7mm],\flagFourLabEight[7mm]\right) &= 1, & p\left(\flagThreeLabTwo[7mm],\flagFourLabThirteen[7mm]\right) &=0,\\
      p\left(\flagThreeLabTwo[7mm],\flagFourLabTen[7mm]\right) &= \frac12,&p\left(\flagThreeLabTwo[7mm],\flagFourLabNine[7mm]\right) &=\frac12,\\
      p\left(\flagThreeLabTwo[7mm],\flagFourLabFourteen[7mm]\right) &=1,&p\left(\flagThreeLabTwo[7mm],\flagFourLabFifteen[7mm]\right) &=0,\\
      p\left(\flagThreeLabTwo[7mm],\flagFourLabSixteen[8mm]\right) &=\frac12.
\end{align*}
Considering the quotient algebra, one sees that
\begin{align*}
    \flagThreeLabThree[8mm] &= \flagFourLabEleven[8mm] + \flagFourLabTwelveb[8mm]\\
\intertext{and}
    \flagFourLabTwelvea[8mm] &= \frac12\flagFiveLabOne[8mm] + \frac12\flagFiveLabTwo[8mm]
\end{align*}
\end{example}

\paragraph{Rooted homomorphisms and averaging.}

The interest of using the algebras~$\A^\sigma$ to study~$\A$ relies on three tools which
we now introduce. We first define an \emph{embedding} of a
$\labelset$-chirotope in an order type~$\ot$ to be a $\sigma$-flag with
root~$\sigma$ and unlabelling~$\ot$. We use \emph{random embeddings}
with the following distribution in mind: fix some point set realizing~$\ot$,
consider the set~$I$ of injections $f\colon\labelset \to P$ such
that~$(P,\labelset,f)$ is a $\sigma$-flag, assume that $I\neq\varnothing$,
choose some injection~$f_r$
from~$I$ uniformly at random, and consider the flag of~$(P,\labelset,f_r)$.
We call this the \emph{labelling} distribution on the
embeddings of~$\sigma$ in~$\ot$.

Next, we associate to any convergent sequence of order types~${(\ot_n)}_{n\in\N}$
with limit~$\ell$, and for every $\labelset$-chirotope~$\sigma$ such that $\ell(\sigma^\varnothing)>0$,
a probability distribution on~$\Hom^+(\A^\sigma,\R)$. For every~$n\in\N$,
the labelling distribution on embeddings of~$\sigma$ in~$\ot_n$ defines
a probability distribution~${\bf P}_{\bf n}^\sigma$ on mappings from~$\A^{\sigma}$
to~$\R$; specifically, for each embedding~$\theta_n$ of~$\sigma$ in~$\ot_n$ we
consider the map
\[ f_{\theta_n}: \left\{\begin{array}{rcl}\A^\sigma & \to & [0,1]\\ \tau & \mapsto
& p(\tau,\theta_n)\end{array}\right.\]
and assign to it the same probability, under~${\mathbf{P}}_{\bf n}^\sigma$, as the
probability of~$\theta_n$ under the labelling distribution. As~$\ell(\sigma^\varnothing)$
is positive, the fact that $p(\ot,\omega_n)$ converges as~$n \to \infty$ for every~$\ot\in\sot$
implies the weak convergence of the sequence~${({\mathbf{P}}_{\bf n}^\sigma)}_{n \in \N}$ to a Borel
probability measure~${\mathbf{P}}_{\ell}^\sigma$
on~$\Hom^+(\A^\sigma,\R)$~\cite[Theorems 3.12 and 3.13]{Razborov:2007}.  Moreover,
as $\ell(\sigma^\emptyset)$ is positive, the homomorphism induced by~$\ell$
determines the probability distribution~${\mathbf{P}}_{\ell}^\sigma$~\cite[Theorem
3.5]{Razborov:2007}.

We finally define, for every $\labelset$-chirotope~$\sigma$, an \emph{averaging}
(or downward) operator~$\llbracket\cdot\rrbracket_{\sigma}\colon \A^{\sigma} \to
\A$ as the linear operator defined on the elements of~$\tau\in \schi^\sigma$ by
$\llbracket{\tau}\rrbracket_{\sigma} = p_\tau^\sigma \cdot \tau^\emptyset$,
where~$p_\tau^\sigma$ is the probability that a random embedding of~$\sigma$
to~$\tau^\emptyset$ (for the labelling distribution) equals~$\tau$.

\begin{example}
  Here are a few examples of~$\sigma$-flags, where~$\sigma = 123$ is
  the CCW chirotope of size~$3$:
\begin{center} $\left\llbracket \flagThreeLab[8mm] \right\rrbracket_{123}
      = \frac12 \flagThree[6mm]$ \hfill $\left\llbracket \flagFourLabOne[8mm]
      \right\rrbracket_{123} = \frac16  \flagFourOne[6mm]$ \hfill $\left\llbracket
      \flagFourLabFive[8mm] \right\rrbracket_{123} = \frac18 \flagFourTwo[6mm]$
\end{center}
\end{example}

For every $\labelset$-chirotope~$\sigma$ and every limit of order types~$\ell$, we
have the following important identity~\cite[Lemma 3.11]{Razborov:2007}:
\begin{equation} \label{eq:flag:averaging} \forall \tau\in\A^\sigma,\quad
\ell\pth{\unlab{\tau}{\sigma}} = \ell\pth{\unlab\sigma\sigma} \int_{\phi^\sigma
    \in \Hom^+(\A^\sigma,\R)} \phi^\sigma(\tau) \diff{\mathbf{P}}_\ell^\sigma,
\end{equation}
which represents the fact that one can sample~$\unlab\tau\sigma$ by first picking a
copy of~$\sigma$ at random, and then, conditioning
on the choice of~$\sigma$, extend it to a copy of~$\tau$.
Equation~\eqref{eq:flag:averaging} in particular implies
that $\ell\pth{\left\llbracket{C^\sigma}\right\rrbracket_{\sigma}}\ge 0$
for any~$C^\sigma \in \A^\sigma$ such that $\phi^\sigma(C^\sigma)\ge 0$ almost
surely for $\phi^\sigma \in \Hom^+(\A^\sigma,\R)$, relatively to~${\bf
P}_\ell^\sigma$. It follows that for every limit of order types~$\ell$ and
every $\labelset$-chirotope~$\sigma$,
\begin{equation} \label{eq:flag:cauchyschwarz} \forall C^\sigma \in \A^\sigma,
\quad \ell\pth{\left\llbracket{\pth{C^\sigma}^2}\right\rrbracket_{\sigma}} \ge 0.
\end{equation}

\subsection{The semidefinite method for order types}

The operator $\llbracket \cdot \rrbracket_\sigma$ is linear, so for
every~$\phi \in \Hom^+(\A,\R)$, every~$A^\sigma_1,A^\sigma_2,\dotsc, 
A^\sigma_I \in \A^\sigma$, and every non-negative reals~$z_1,z_2,\dotsc,z_I$, we have
\[ \phi\left(\left\llbracket \sum_{i=1}^I {z_i
      \cdot{\left(A_i^\sigma\right)}^2}
  \right\rrbracket_{\sigma}\right)\ge 0.\]
Every real (symmetric) positive semidefinite matrix~$M$ of size~$n\times n$
can be written as $\sum_{i=1}^{n}\lambda_iu_i^Tu_i$ where~$\lambda_1,\dotsc,\lambda_n$
are non-negative real numbers and~$u_1,\dotsc,u_n$ orthonormal vectors of~$\mathbf{R}^n$.
It follows that for every finite set of flags~$S \subseteq \sot^\sigma$ and for every real
(symmetric) positive semidefinite matrix~$M$ of size~$|S| \times |S|$,
we have $\phi\left(\left\llbracket v_S^T M v_S
  \right\rrbracket_{\sigma}\right)\ge 0$, where~$v_S$ is the vector
  in~$\pth{\A^\sigma}^{|S|}$ whose $i$th coordinate equals the $i$th element
  of~$S$ (for some given order). This recasts the search for a
good ``positive'' quadratic combination as a semidefinite programming
problem.

Let~$N$ be an integer, $f = \sum_{\ot \in \sot_N} f_\ot \ot$ some
target function, and~$\sigma_1,\dotsc,\sigma_k$ a finite list of
chirotopes so that $|\sigma_i| \equiv N \mod 2$. For each $i
\in [k]$, let $v_i$ be the
$|\schi^{\sigma_i}_{(N+|\sigma_i|)/2}|$-dimensional vector with $i$th
coordinate equal to the $i$th element of~$\schi^{\sigma_i}_{(N+|\sigma_i|)/2}$. We look for a real~$b$ as large
as possible subject to the constraint that there are~$k$ real
(symmetric) positive semidefinite matrices~$M_1, M_2, \dotsc, M_k$,
where~$M_i$ has size~$|v_i| \times |v_i|$, so that
\begin{equation}
\label{eq:sdp}
 \forall \ot \in \sot_N, \quad f_\ot \ge a_\ot\qquad \text{where}
\qquad \sum_{\ot \in \sot_N} a_\ot \ot = \sum_{i\in[k]}
\left\llbracket v_i^T M_i v_i \right\rrbracket_{\sigma_i} + b
\sum_{\ot \in \sot_N} \ot.
\end{equation}

The values of the real numbers~$a_\ot$ are determined by~$b$, the entries of the
matrices~$M_1, M_2, \dotsc, M_k$, the splitting probabilities~$p(\tau',\tau'';\tau)$, where~$\tau', \tau'' \in
\schi^{\sigma_i}_{(N+|\sigma_i|)/2}$ and~$\tau \in
\schi^{\sigma_i}_N$, and the probabilities~$p_\tau^{\sigma_i}$, where~$\tau \in
\sot^{\sigma_i}_N$. Moreover, finding the maximum value of~$b$ and the entries of
the matrices~$M_i$ can be formulated as a semidefinite program.

\paragraph{Effective semidefinite programming for flags of order types.}

In order to use a semidefinite programming software for finding a
solution of programs in the form of~\eqref{eq:sdp}, it is enough to
generate the sets~$\sot_N$ and~$\schi^{\sigma_i}_N$, the split
probabilities~$p(\tau',\tau'';\tau)$, where~$\tau', \tau'' \in
\schi^{\sigma_i}_{(N+|\sigma_i|)/2}$ and~$\tau \in
\schi^{\sigma_i}_N$, and the probabilities~$p_\tau^{\sigma_i}$, where~$\tau \in
\sot^{\sigma_i}_N$.

We generated the sets and the values by brute force up to~$N=8$.  The
only non-trivial algorithmic step is deciding whether two order types,
represented by point sets, are equal. This can be done by computing
some canonical ordering of the points that turn two point sets with
the same order type into point sequences with the same chirotope. A
solution taking $\bigo{n^2}$ time was proposed by Aloupis et
al.~\cite{Oti}; the method that we implemented takes time~$\bigo{n^2\log n}$
and seems to be folklore (we learned it from Pocchiola and Pilaud).
For solving the semidefinite program itself, we used a library called
CSDP~\cite{Borchers:1999}.  The input data for CSDP was generated
using the mathematical software SAGE~\cite{sw:sage}.

\paragraph{Setting up the semidefinite programs.}
In the rest of this section we work with~$N=8$ and use chirotopes
labelled~$\sigma_1,\sigma_2, \dotsc,\sigma_{24}$ with~$\sigma_1$ being the
empty chirotope, $\sigma_2$ the only chirotope of size
\begin{wrapfigure}{l}{4cm}
\smallskip
\noindent
\begin{minipage}{3.5cm}
\typeFourOne[10mm]
\hspace{0.4cm}
\typeFourTwo[10mm]
\end{minipage}
\end{wrapfigure}
two, $\sigma_3$
and~$\sigma_4$ the two chirotopes of size~$4$ depicted on the left,
and~$\sigma_5,\dotsc,\sigma_{24}$ a fixed set of~$20$ chirotopes of size~$6$
so that $\sot_6 = \left\{\sigma_5^\emptyset,\dotsc,\sigma_{24}^\emptyset\right\}$.
Note that since $|\sot_6|=20$, what follows does not depend on the
choices made in labelling~$\sigma_5,\dotsc,\sigma_{24}$. The
vectors~$v_1,\dotsc,v_{24}$ described in the previous
paragraph for this choice of~$N$ and~$\sigma_i$'s have respective lengths~$2$,
$44$, $468$, $393$, $122$, $112$, $114$, $101$, $101$, $103$, $106$,
$103$, $103$, $120$, $102$, $108$, $94$, $90$, $91$, $91$, $95$, $95$,
$92$ and~$104$.

\paragraph{Computations proving \cref{p:c5c6} and~\cref{p:imbalance}.} 

We solved two semidefinite programs with the above choice of parameters
for~$f= \sum_{\ot \in \sot_8} p(\diamond_5,\ot)$ and~$f= \sum_{\ot \in
  \sot_8} p(\diamond_6,\ot)$ and obtained real symmetric positive
semidefinite matrices~$M_1,\dotsc,M_{24}$ and~$M_1',\dotsc,M_{24}'$ with
rational entries so that
\[ \sum_{\ot \in \sot_8} p(\diamond_5,\ot) \ot \ge \sum_{i\in[24]}
\llbracket v_i^T M_i v_i\rrbracket_{\sigma_i} + \frac{15715211616602583691}{258254417031933722624}
\sum_{\ot \in \sot_8} \ot,\]
and
\[\sum_{\ot \in \sot_8} p(\diamond_6,\ot) \ot \ge \sum_{i\in[24]}
\llbracket v_i^T M_i' v_i\rrbracket_{\sigma_i} + \frac{67557324685725989}{36893488147419103232}
\sum_{\ot \in \sot_8} \ot.\]
The lower bounds on~$c_5$ and~$c_6$ then follow from \cref{e:cond}.

Assume (without loss of generality) that $\sot_6 =
\{\ot_{6,1},\ot_{6,2},\dotsc,\ot_{6,20}\}$. Solving two semidefinite
programs, we obtained real symmetric positive semidefinite
matrices~$M_1,\dotsc,M_{24}$ and~$M_1',\dotsc,M_{24}'$ as well as non-negative
rational values~$d_1,\dotsc,d_{20}$ and~$d_1',\dotsc,d_{20}'$ so that
\begin{align*}
\sum_{j \in [20]} d_j \left(\ot_{6,j} - \frac1{32} \sum_{\ot \in \sot_8} \ot\right) + \sum_{i\in[24]} \llbracket v_i^T
    M_i v_i\rrbracket_{\sigma_i} &< 0\\
\shortintertext{and}
\sum_{j \in [20]} d_j' \left(- \ot_{6,j} + \frac1{18} \sum_{\ot \in \sot_8} \ot\right) + \sum_{i\in[24]} \llbracket
    v_i^T M_i' v_i\rrbracket_{\sigma_i} &< 0.
\end{align*}
  They respectively imply that there is no~$\ell \in \Hom^+(\A, \R)$ such that,
  $\ell(\ot) \ge 1/32$ for every~$\ot \in \sot_6$, or
  such that $\ell(\ot) \le 1/18$ for every~$\ot \in \sot_6$. Altogether
  this proves \cref{p:imbalance} with an imbalance bound of~$32/18 > 1.77$.
  The better bound of \cref{p:imbalance} is obtained by a refinement of this
  approach where the order types with minimum and maximum probability are
  prescribed; this requires solving over~$700$ semidefinite programs.

  The numerical values of the entries of all the matrices~$M_1,\dotsc,M_{24}$
  and coefficients~$d_1,\dotsc,d_{20}$ mentioned above can be downloaded from
  the web page \url{http://honza.ucw.cz/proj/ordertypes/}. In fact, the
  matrices~$M_1, \dotsc, M_{24}$ are not stored directly, but as an appropriate
  non-negative sum of squares, which makes the verification of positive
  semidefiniteness trivial. To make an independent verification of our
  computations easier, we created sage scripts called
  \texttt{verify\_prop*.sage}, available from the same web page.

\section{Representation of limits by measures}\label{s:measure}

\Cref{l:lmu} asserts that every probability (or finite Borel
measure) over~$\R^2$ that charges no line defines a limit of order
types. Going in the other direction, we say that a measure~$\mu$
\emph{realizes} a limit of order types~$\ell$ if $\ell_\mu = \ell$. We
examine here two questions: does every limit of order types enjoy such
a realization and, for those that do, what does the set of measures
realizing them look like? We answer the first question negatively in
\cref{s:nonrep}. We then give partial answers to the second
question in \cref{s:rigid} and~\cref{s:nonrigid}. Every measure
of~$\R^2$ or on~$\s^2$ that we consider is defined on the Borel $\sigma$-algebra.

\subsection{Spherical geometry}\label{sec:spherical}

If~$\mu$ is a measure that charges no line and~$f\colon \supp \mu \to
\R^2$ is an injective map that preserves orientations, then $\mu' = \mu
\circ f^{-1}$ is another measure that realizes the same limit
as~$\mu$. A map that preserves orientations must preserve alignments,
and therefore coincides locally with a projective map. Note, however,
that if we fix two points~$p,q \in \R^2$ and take a third point~$r$
``to infinity'' in directions~$\pm\vec{u}$, the orientation of the
triple~$(p,q,r)$ is different for~$+\vec{u}$ and for~$-\vec{u}$. The
appropriate geometric setting is therefore not projective, but
spherical.

\paragraph{The spherical model.}

For convenience, we write~$\{z>0\}$ for~$\sst{(x,y,z)\in\mathbf{R}^3}{z>0}$ and,
similarly, $\{z<0\}$ and~$\{z=0\}$ stand for~$\sst{(x,y,z)\in\mathbf{R}^3}{z<0}$ and
for~$\sst{(x,y,z)\in\mathbf{R}^3}{z=0}$, respectively.
Consider the standard identification of~$\R^2$ with the open upper
half of the unit sphere in $\R^3$ given by the bijection:
\[ \iota\colon \left\{\begin{array}{rcl}
\R^2 & \to & \s^2 \cap \{z>0\} \\ \pth{\begin{matrix}x\\y\end{matrix}} & \mapsto & \frac{1}{\sqrt{x^2+y^2+1}} \pth{\begin{matrix}x\\y\\1\end{matrix}}. \end{array}\right.\]
The map~$\iota$ sends the lines of~$\R^2$ to the intersection of
the half-space~$z>0$ with the great circles of~$\s^2$, that is circles cut out by
planes through the center of that sphere, other than the circle contained
in the plane~$z=0$. We can define the orientation of a triple~$(p,q,r)$ of
points of~$\s^2$ as
the sign of~$\det(p,q,r)$, so that it coincides, through~$\iota$, with the
planar notion. Naturally,~$\iota$ transports planar measures that charge no
line into spherical measures, supported on~$\s^2 \cap \{z>0\}$, that charge
no great circle.

\paragraph{Spherical transforms.}

Let~$X,Y \subset \R^2$. A function~$f\colon X \to Y$ is a \emph{spherical
  transform} if there exist a direct affine transform~$g$ and a
rotation~$h \in SO(\R^3)$ such that $h(\iota(X)) \subseteq \{z>0\}$
    and~$f = {(g \circ \iota^{-1} \circ h \circ \iota)}_{|X}$. In words,
spherical transforms are compositions of direct affine
transforms with a lifting of~$\R^2$ to the upper half of~$\s^2$, a
rotation that keeps the image of the lifting within that upper
half-sphere, then a backprojection from that upper sphere to~$\R^2$. Notice that~$f$
is defined over all of~$\R^2$ if~$h$ preserves the great circle~$\s^2
\cap \{z=0\}$, and on the open halfplane~$\iota^{-1} \pth{\{z>0\} \cap
h^{-1}(\{z>0\})}$ otherwise.

\begin{lemma}\label{l:st1}
  If~$\mu$ is a measure in~$\R^2$ that charges no line and~$f$ is a
  spherical transform defined over~$\supp \mu$, then~$\ell_\mu =
  \ell_{\mu \circ f^{-1}}$.
\end{lemma}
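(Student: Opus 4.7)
The plan is to reduce the identity $\ell_\mu = \ell_{\mu \circ f^{-1}}$ to two observations: (i) $f$ preserves the orientation of every triple in its domain, and (ii) the preimage by $f$ of any line is contained in a line. Together with a change of variables in the $n$-fold sampling process defining $\ell_\mu(\omega)$, these will be enough.

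For~(i), I would decompose $f = g \circ \iota^{-1} \circ h \circ \iota$ and verify orientation preservation factor by factor. A direct computation shows that for any triple $p_1,p_2,p_3 \in \R^2$, writing $p_i=(x_i,y_i)$,
\[
\det\bigl(\iota(p_1),\iota(p_2),\iota(p_3)\bigr) = \left(\prod_{i=1}^3 \frac{1}{\sqrt{|p_i|^2+1}}\right) \det\begin{pmatrix} x_1 & x_2 & x_3 \\ y_1 & y_2 & y_3 \\ 1 & 1 & 1 \end{pmatrix},
\]
whose sign coincides with the planar orientation of $(p_1,p_2,p_3)$; an analogous identity for $\iota^{-1}$ holds on $\s^2 \cap \{z>0\}$, again with a positive prefactor coming from the positivity of the third coordinates. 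Since $h \in SO(\R^3)$ preserves determinants on $\R^3$ and $g$ is a direct affine transform (hence has positive Jacobian determinant), every factor of $f$ preserves orientation, and so does $f$.

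For~(ii), $\iota$ sends each line of $\R^2$ into a great circle of $\s^2$ (in fact into its intersection with $\{z>0\}$), rotations of $\s^2$ send great circles to great circles, and $\iota^{-1}$ sends portions of great circles back to lines of $\R^2$; composing with the affine map $g$, the preimage $f^{-1}(L)$ of any line $L \subset \R^2$ is contained in a line. Consequently $(\mu \circ f^{-1})(L) = \mu(f^{-1}(L)) = 0$, so \cref{l:lmu} ensures that $\ell_{\mu \circ f^{-1}}$ is itself a limit of order types.

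Finally, fix an order type $\omega$ of size $n$. Drawing points $x_1,\dots,x_n$ independently from the normalization of $\mu \circ f^{-1}$ is equidistributed with drawing $y_1,\dots,y_n$ independently from the normalization of $\mu$ and setting $x_i = f(y_i)$. Since $\mu$ charges no line, almost surely no triple among the $y_i$'s is aligned, and the orientation preservation established above guarantees that $(x_1,\dots,x_n)$ and $(y_1,\dots,y_n)$ have the same order type almost surely. Hence $\ell_{\mu \circ f^{-1}}(\omega) = \ell_\mu(\omega)$ for every $\omega$. The main obstacle is merely the careful bookkeeping for the spherical step; once the determinant identity above is written out, everything else reduces to a direct application of the definitions.
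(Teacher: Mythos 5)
Your proof is correct and follows essentially the same route as the paper: decompose $f = g \circ \iota^{-1} \circ h \circ \iota$, then verify orientation preservation factor by factor using sign-of-determinant identities. You supply a few steps the paper leaves implicit — the explicit $3\times 3$ determinant formula for $\iota$, the verification that $\mu\circ f^{-1}$ still charges no line, and the change-of-variables equidistribution argument — but these are routine and the core argument is identical.
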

\begin{proof}
 The statement follows from the observation that~$f$ preserves
 orientations over~$\supp \mu$. To see this, let~$p,q,r \in \supp \mu$
    and let us decompose $f = {(g \circ \iota^{-1} \circ h \circ
    \iota)}_{|\supp \mu}$ where~$g$ is a direct affine transform and~$h \in
    SO(\R^3)$. Translations preserve orientation. Writing orientations as
 determinants reveals that direct linear transforms also preserve
 orientations. Altogether, the direct affine transform~$g$ thus also
 preserves orientations. It follows that

  \begin{align*}
    [f(p),f(q),f(r)] & =  [g \circ \iota^{-1}
      \circ h \circ \iota(p), g \circ \iota^{-1}
      \circ h \circ \iota(q), g \circ \iota^{-1}
      \circ h \circ \iota(r)]\\
     & =   [\iota^{-1}
       \circ h \circ \iota(p), \iota^{-1}
       \circ h \circ \iota(q), \iota^{-1}
       \circ h \circ \iota(r)]\\
     & =  \sign \det \pth{\begin{matrix} h \circ \iota(p) & h \circ \iota(q) & h \circ \iota(r) \end{matrix}} \\
     & =  \sign \pth{\det (h) \det \pth{\begin{matrix} \iota(p) & \iota(q) & \iota(r) \end{matrix}}}\\
     & =  \sign \pth{\det (h)} [p,q,r],
   \end{align*}
and~$f$ preserves orientations because~$h$ is direct.
\end{proof}

\begin{lemma}\label{l:st2}
 Let~$C$ be a closed convex set and~$L$ a line in~$\R^2$. If
 $d(C,L)>0$ and~$C$ contains no ray parallel to~$L$, then there exists
 a spherical transform~$f$ defined over~$C$ such that $f(C)$ is
 compact.
\end{lemma}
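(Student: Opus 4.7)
The plan is to find a rotation $h \in SO(\R^3)$ such that the compact set $\overline{\iota(C)} \subseteq \s^2$ lies uniformly above the equator once $h$ is applied; taking $g$ to be the identity, the spherical transform $f = \iota^{-1} \circ h \circ \iota$ will then map $C$ into a closed disk of $\R^2$.

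The first step is to describe $\overline{\iota(C)}$ in $\s^2$. Since $\iota$ is a homeomorphism from $\R^2$ onto $\s^2 \cap \{z > 0\}$ and $C$ is closed, the accumulation points of $\iota(C)$ in $\s^2$ all lie on the equator $\{z = 0\}$. An equatorial point $(u, 0)$ with $u \in \s^1$ belongs to $\overline{\iota(C)} \setminus \iota(C)$ precisely when some sequence $q_n \in C$ satisfies $|q_n| \to \infty$ and $q_n/|q_n| \to u$; by the convexity and closedness of $C$, this is in turn equivalent to $C$ containing a ray of direction $u$. Letting $R \subseteq \s^1$ denote the (compact) set of such unit directions of recession of $C$, we get $\overline{\iota(C)} = \iota(C) \cup (R \times \{0\})$.

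Let $n_L$ be the unit normal to $L$ pointing into the half-plane containing $C$ and fix $q_0 \in L$, so $C \subseteq \{q : \langle n_L, q - q_0 \rangle \geq d(C, L)\}$. Any $u \in R$ must satisfy $\langle n_L, u \rangle \geq 0$ (otherwise the ray would leave the half-plane), and the assumption that $C$ contains no ray parallel to $L$ rules out equality; by compactness of $R$, there is thus $\alpha > 0$ with $\langle n_L, u \rangle \geq \alpha$ on $R$. Choose $w := (\epsilon n_L, \sqrt{1 - \epsilon^2}) \in \s^2$ with $\epsilon > 0$ small enough that $\epsilon \langle n_L, q_0 \rangle + \epsilon\,d(C,L) + \sqrt{1 - \epsilon^2} > 0$ (which holds for all sufficiently small $\epsilon$, since the expression tends to $1$). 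A direct calculation gives
\[
\langle w, \iota(q) \rangle = \frac{\epsilon\,\langle n_L, q \rangle + \sqrt{1-\epsilon^2}}{\sqrt{|q|^2 + 1}} > 0 \text{ for every } q \in C,
\]
while $\langle w, (u, 0) \rangle = \epsilon\,\langle n_L, u \rangle \geq \epsilon\alpha > 0$ for every $u \in R$. So $p \mapsto \langle w, p \rangle$ is continuous and positive on the compact set $\overline{\iota(C)}$, hence bounded below by some $\delta > 0$. Picking $h \in SO(\R^3)$ with $h(w) = (0, 0, 1)$ puts $h(\overline{\iota(C)}) \subseteq \s^2 \cap \{z \geq \delta\}$, whence $f(C) \subseteq \iota^{-1}(\s^2 \cap \{z \geq \delta\})$ is contained in a closed disk of $\R^2$.

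The main conceptual step is the identification of the equatorial accumulation set of $\iota(C)$ with the recession directions of $C$, together with the observation that the two pieces of the hypothesis—$d(C, L) > 0$ and the absence of rays parallel to $L$—are exactly what is needed to force these directions into the open semicircle $\{u \in \s^1 : \langle n_L, u \rangle > 0\}$. Once this is in hand, a slight tilt of the north pole toward $n_L$ produces the required uniform separation from the equator, and everything else reduces to the compactness of $\overline{\iota(C)}$.
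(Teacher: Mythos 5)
Your proof is correct and takes a genuinely different route from the paper's. The paper works on the open halfplane $X$ containing $C$ and bounded by $L$, chooses $h$ so that the great circle $\Gamma$ over $\iota(L)$ becomes the equator, and proves boundedness of $f(C)$ by contradiction: an escaping sequence $(p_n)\subset C$ would make $h\circ\iota(p_n)$ accumulate on the equator, hence $\iota(p_n)$ accumulate on $\Gamma$, and the limit either lies on $\iota(L)$ (forcing $d(C,L)=0$) or is a direction vector of $L$ (forcing a ray in $C$ parallel to $L$). You instead identify the equatorial accumulation set of $\iota(C)$ with the set $R$ of unit recession directions of $C$, observe that the two hypotheses precisely push $R$ into the open semicircle $\{\langle n_L,\cdot\rangle>0\}$, and take $w$ to be a small tilt of the north pole toward $n_L$ so that $\langle w,\cdot\rangle$ is positive, hence bounded away from zero, on the compact set $\cll\iota(C)$. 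Your version is more constructive (the rotation is written down explicitly), handles the two hypotheses in one stroke rather than by a case split on the limit point, and makes the role of the recession cone transparent. One small caveat, which applies equally to the paper's own argument: both proofs establish that $f(C)$ is \emph{bounded} (contained in a closed disk), not that it is closed --- when $C$ is unbounded, the images $\iota^{-1}(h(u,0))$ of the recession directions $u\in R$ are limit points of $f(C)$ that have no $f$-preimage in $C$, so $f(C)$ is only relatively compact. This is harmless for the sole application of the lemma (Corollary~\ref{c:comp}), where what is actually needed is compactness of the support of the pushed-forward measure, i.e.\ of the closure of $f(\supp\mu)$, and for that boundedness of $f(C)$ suffices.
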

\begin{proof}
  Let~$X$ be the open halfpane bounded by~$L$ that contains~$C$.
  The image~$\iota(L)$ coincides with the intersection of a great
  circle~$\Gamma$ with~$\{z>0\}$. Moreover, $\iota(X)$ is contained in
  one of the hemispheres bounded by~$\Gamma$. Let~$h \in SO(\R^3)$ be a
  rotation that maps~$\Gamma$ to~$\s^2 \cap \{z=0\}$ and maps the interior
  of~$\iota(X)$ to (a subset of) the region~$\{z > 0\}$. We set~$f =
    {(\iota^{-1} \circ h \circ \iota)}_{|X}$.

  \bigskip
  
  We argue that if~$f(C)$ is unbounded, then~$d(C,L)=0$ or~$C$
    contains a ray parallel to~$L$. Let~${(p_n)}_{n \in \N} \subset C$
    such that ${(f(p_n))}_{n \in \N}$ is unbounded. Since~$\s^2$ is
  compact, up to taking a subsequence we can assume that~$\{h
  \circ\iota(p_n)\}$ converges. Its limit~$u$ must belong to~$\{z=0\}$,
  as otherwise $\iota^{-1}(u) \in \R^2$, which would
  contradict the assumption that $\{f(p_n)\}$ is unbounded. This means
  that $(\iota(p_n))$ converges to a limit~$u' =h^{-1}(u)\in
    \Gamma$. If~$u' \in \iota(L)$, then~$d(C,L)=0$. If~$u'\notin\iota(L)$,
    then $u' \in\Gamma \cap \{z=0\}$ and we can write~$u' = (u_x:u_y:0)$ where $\tilde u' =
  (u_x,u_y)$ is a unit direction vector for~$L$ to which the vectors
    ${\left(\frac{p_1p_n}{\|p_1p_n\|}\right)}_{n \in \N}$ converge. For
  any positive real number~$t>0$ and for any large enough integer~$n$ we have~$\|p_1p_n\| >t$, so the
  segment~$p_1p_n$ contains a point~$q_n$ with $\|p_1q_n\| = t$. The
  sequence $\{q_n\}$ is contained in~$C$ by convexity, and converges
  to~$p_1 + t \tilde u$. Since~$C$ is closed, this limit is also in
  $C$. The ray~$p_1+\R^+\tilde u'$ is thus in~$C$, which contradicts
  the assumption that~$C$ contains no ray parallel to~$L$.
  We deduce that~$f(C)$ is bounded and thus, altogether, the
  assumptions on~$L$ and~$C$ guarantee that~$f(C)$ is compact.
\end{proof}

\subsection{A limit that is not representable}\label{s:nonrep}

For convenience, let us repeat the definition of~$\ell_\odot$ from the
introduction. For~$t \in (0,1)$ let~$\odot_t$ be the probability
distribution over~$\R^2$ supported on two concentric circles, with
respective radii~$1$ and~$t$ where each of the two circles has
$\odot_t$-measure~$1/2$, distributed proportionally to the length on
that circle. We define~$\ell_{\odot_t}$ to be the limit of order types
associated to~$\odot_t$. We define~$\ell_\odot$ to be the limit of an
arbitrary convergent sub-sequence of~${(\ell_{\odot_{1/n}})}_{n \in \N}$.

We use the following two facts about measures, limits of order
types, and sequences of point sets. Let~$\mu$ be a measure of the plane,
and~$P_n$ a set of~$n$ points of the plane sampled i.i.d.\ according to~$\mu$.
The \emph{empirical measure} associated to~$P_n$ is the measure~$\mu_{P_n}$ defined
on all $\mu$-measurable subsets~$A$ of~$\mathbf{R}^2$ by
\[
    \mu_{P_n}(A)=\frac1n|P_n\cap A|.
\]

\begin{lemma}\label{l:Pnmu}
  Let~$\mu$ be a measure that does not charge lines and for~$n \ge 1$,
  let~$P_n$ be a set of points sampled independently from~$\mu$,
    with $|P_n| \underset{n \to \infty}{\to} \infty$.
  The empirical measure of~$P_n$ almost-surely weakly converges to~$\mu$.
\end{lemma}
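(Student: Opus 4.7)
The plan is to apply the strong law of large numbers (SLLN) to a countable collection of test functions, an argument sometimes attributed to Varadarajan. After rescaling we may assume that $\mu$ is a probability measure. Fix a countable family $\mathcal{F}=\{f_k\}_{k\in\N}$ of bounded continuous functions on $\R^2$ that is \emph{convergence-determining}, meaning that a sequence of Borel probability measures $\nu_n$ converges weakly to $\mu$ if and only if $\int f_k \diff \nu_n \to \int f_k \diff \mu$ for every $k$. Such a family exists because the weak topology on the space of Borel probability measures on the Polish space $\R^2$ is metrizable and separable; concretely one can take a countable uniformly dense subset of the bounded Lipschitz functions on $\R^2$.

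Model $P_n$ as the first $|P_n|$ terms of a single i.i.d.\ sequence $X_1,X_2,\dotsc$ of $\mu$-distributed points. For each fixed $k$ the real-valued random variables $f_k(X_1),f_k(X_2),\dotsc$ are i.i.d., bounded, with mean $\int f_k \diff \mu$, so the SLLN yields
\[
\int f_k \diff \mu_{P_n} \;=\; \frac{1}{|P_n|}\sum_{i=1}^{|P_n|} f_k(X_i) \;\xrightarrow[n\to\infty]{}\; \int f_k \diff \mu
\]
almost surely (using that $|P_n|\to\infty$ together with the fact that any deterministic subsequence of an almost surely convergent sequence has the same almost sure limit). Intersecting the corresponding probability-one events over $k\in\N$ produces a single event of probability one on which this convergence holds simultaneously for every $k$, and hence on which $\mu_{P_n}\to\mu$ weakly by the choice of $\mathcal{F}$.

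The main delicate point is ensuring that $\mathcal{F}$ is truly convergence-determining and not merely distribution-determining: for probability measures on the non-compact space $\R^2$, mass may escape to infinity, so convergence of integrals against every compactly supported continuous function alone does not imply weak convergence. If one prefers to handle tightness separately, for each $m\in\N$ inner regularity of $\mu$ provides a compact set $K_m\subset\R^2$ with $\mu(K_m^c)<1/m$; approximating $\mathbf{1}_{K_m^c}$ from above by a bounded continuous function $\varphi_m$ with $\int \varphi_m \diff \mu \le 2/m$ and applying the SLLN to each $\varphi_m$ shows that almost surely $\limsup_n \mu_{P_n}(K_m^c)\le 2/m$ for every $m$. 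This is exactly the tightness required to upgrade convergence on compactly supported test functions to full weak convergence \emph{via} the portmanteau theorem, giving an alternative route to the same conclusion.
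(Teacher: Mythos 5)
The paper gives no proof of this lemma; it is invoked as a known fact, essentially Varadarajan's theorem on almost-sure weak convergence of empirical measures on Polish spaces, so there is no argument of the authors to compare against. Your proof, SLLN applied to a countable convergence-determining family of bounded continuous test functions, is the standard one and is correct in substance.

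One parenthetical needs fixing: the bounded Lipschitz functions on $\R^2$ are not $\|\cdot\|_\infty$-separable (the truncated distance functions $x\mapsto\min(1,d(x,S))$ for $S\subseteq 2\Z^2$ are uncountably many and pairwise at uniform distance~$1$), so no countable uniformly dense subset of them exists. A correct concrete choice is the family of compactly supported tent functions $\max\{0,\,1-k\,\|x-x_i\|\}$ for a countable dense $\{x_i\}\subset\R^2$ and $k\in\N$; these are convergence-determining here because vague convergence of probability measures to a probability measure is automatically weak convergence, which is precisely the upgrade your tightness paragraph proves, so either route works once that parenthetical is rephrased. Finally, since the lemma does not pin down the joint law of $(P_n)_n$, your choice to realize the $P_n$ as prefixes of a single i.i.d.\ sequence deserves a word: it is the reading under which the statement holds for arbitrary growth of $|P_n|$, whereas for fully independent $P_n$ with $|P_n|$ growing too slowly the second Borel--Cantelli lemma would actually defeat almost-sure convergence. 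This is harmless for the paper, as its application in the proof of \cref{t:norep} takes $|P_n|=n^2$, for which Hoeffding plus Borel--Cantelli covers the independent case as well.
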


\noindent
The next lemma follows from ideas similar to those used in the proof of Proposition~\ref{p:distrib}.

\begin{lemma}\label{l:Pnlim}
    Let~$\mu$ be a measure that does not charge lines. If ${(P_n)}_{n\in\mathbf{N}}$ is a
  sequence of point sets with $|P_n| = n^2$ and whose empirical measures converge to~$\mu$ as~$n$ goes to infinity,
  then the order type of~$P_n$ converges to~$\ell_\mu$ as~$n$ goes to infinity.
\end{lemma}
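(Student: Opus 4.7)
The plan is to prove pointwise convergence $p(\omega, P_n) \to \ell_\mu(\omega)$ for every fixed order type $\omega$, in the same spirit as the proof of \cref{p:distrib}. Fix $\omega$ of size $k$, and let $\tilde{p}(\omega, P_n)$ denote the probability that $k$ points sampled \emph{with replacement} from the (normalised) empirical measure $\mu_{P_n}$ realize $\omega$, where tuples with repeated entries are declared not to realize $\omega$. The first step is to absorb the with/without-replacement discrepancy: a union bound gives
\[
|p(\omega, P_n) - \tilde{p}(\omega, P_n)| \le \binom{k}{2}/n^2 \to 0,
\]
exactly as in the estimate $|\alpha_n - \beta_n| \le |\omega'||\omega''|/|\omega_n|$ of~\eqref{eq:ab}.

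Next, I would recast both surviving quantities as integrals against product measures. Letting $f_\omega\colon (\R^2)^k \to \{0,1\}$ be the indicator of the Borel set of $k$-tuples realizing $\omega$, we have $\tilde{p}(\omega, P_n) = \int f_\omega \, d\mu_{P_n}^{\otimes k}$ and $\ell_\mu(\omega) = \int f_\omega \, d\mu^{\otimes k}$. The assumed weak convergence $\mu_{P_n} \to \mu$ passes to product measures, so $\mu_{P_n}^{\otimes k} \to \mu^{\otimes k}$ weakly on $(\R^2)^k$.

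I would conclude via the Portmanteau theorem in the version applicable to bounded Borel functions whose discontinuity locus is negligible for the limit measure. The indicator $f_\omega$ is locally constant at every $k$-tuple $(x_1,\dotsc,x_k)$ whose entries are pairwise distinct and contain no three collinear points, so its set of discontinuities is contained in the union, over index triples $\{i,j,l\}$, of the collinearity loci $\{x_i, x_j, x_l \text{ collinear}\}$. A Fubini argument, conditioning on any two of the coordinates, shows that each such locus is $\mu^{\otimes k}$-null because $\mu$ charges no line. Portmanteau then yields $\int f_\omega \, d\mu_{P_n}^{\otimes k} \to \int f_\omega \, d\mu^{\otimes k}$, which combined with the first step gives $p(\omega, P_n) \to \ell_\mu(\omega)$.

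The only delicate point is the Portmanteau step: $f_\omega$ is genuinely discontinuous on the collinearity locus, and the whole argument hinges on the hypothesis that $\mu$ charges no line in order to ensure this locus is $\mu^{\otimes k}$-negligible; the rest reduces to standard measure-theoretic facts.
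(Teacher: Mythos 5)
Your proposal is correct. The paper does not actually write out a proof of \cref{l:Pnlim} — it only remarks that the lemma ``follows from ideas similar to those used in the proof of \cref{p:distrib}'' — and your argument is a complete and valid instantiation of that sketch: the with/without-replacement estimate is exactly the idea behind~\eqref{eq:ab}, and the remaining step (weak convergence of the product measures plus the Portmanteau/mapping theorem for the indicator $f_\omega$, whose discontinuity locus lies in the collinearity/coincidence set, which is $\mu^{\otimes k}$-null since a measure charging no line is in particular atomless) is the standard way to finish.
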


\subsubsection{Reduction to compact support}

Define the \emph{peeling depth} of
an order type~$\omega$ as the largest integer~$k$ such that $P_k \neq
\emptyset$ in a sequence~${\{P_i\}}_{i\in\N}$ where~$P_1$ realizes~$\omega$ and
$P_{i+1} = P_i \setminus \conv(P_i)$ for each~$i \ge 1$. For
instance, $k$ points in convex position have peeling depth~$1$, and
any~$k$ points chosen from the support of~$\odot_t$ have peeling depth
at most~$2$.

\begin{lemma}\label{l:pdc}
  Fix~$k \in \N$ and let~$\ell$ be a limit of order types such that
  $\ell(\omega) = 0$ for any order type~$\omega$ of peeling depth greater
  than~$k$. If there exists a probability measure~$\mu$ realizing~$\ell$,
      then there exists one with compact support.
\end{lemma}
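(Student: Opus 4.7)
The plan is to reduce to the compact case via a spherical transform, leveraging Lemmas \ref{l:st1} and \ref{l:st2}. Since $\ell_{\mu\circ f^{-1}} = \ell_\mu$ for any spherical transform $f$ defined on $\supp \mu$, it suffices to produce such an $f$ sending $\supp \mu$ (equivalently, its closed convex hull $C := \overline{\conv(\supp \mu)}$) into a compact set. By Lemma \ref{l:st2}, this is guaranteed as soon as one exhibits a line $L$ with $d(C, L) > 0$ such that $C$ contains no ray parallel to $L$.

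The heart of the proof is thus to demonstrate the existence of such a line, and the key intermediate assertion I would establish first is that the recession cone $K$ of $C$ is \emph{pointed}, i.e.\ that $C$ contains no full line. This is the step I expect to be the main obstacle. My approach is by contrapositive: assuming $K$ contains some line $\R v$, I would build in $\supp \mu$, with positive probability under i.i.d.\ sampling from $\mu/\mu(\R^2)$, a sample of $k+1$ nested convex layers, yielding an order type of peeling depth $>k$; this contradicts $\ell_\mu(\omega)=p(\omega,\mu)=0$ for every $\omega$ of peeling depth exceeding $k$. Concretely, since $\mu$ charges no line while $\R v \subseteq C$, the support either straddles $\R v$ or accumulates on it as one goes to infinity in the directions $\pm v$; in both situations one locates support points that approximate the vertices of a suitable ``ladder'' of nested convex polygons, and $\mu$ assigns positive mass to every neighborhood of such support points, so the corresponding sample event has positive probability.

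Once $K$ is known to be pointed, its intersection with the unit circle is a closed arc of angular length strictly less than $\pi$, so $K \cup (-K)$ covers strictly less than the full circle. Any direction $v$ in the complement satisfies $\pm v \notin K$, so $C$ contains no ray parallel to a line of direction $v$. Moreover, for $w$ in the interior of the polar cone $K^\circ$, the functional $p \mapsto \langle p, w\rangle$ is bounded above on $C$ (this is the standard fact that the barrier cone of a closed convex set contains the polar of its recession cone); choosing $v$ so that $w := v^\perp$ lies in this interior, a line $L$ of direction $v$ pushed far enough in the $w$-direction achieves $d(C, L) > 0$. Applying Lemma \ref{l:st2} to $(C, L)$ then produces a spherical transform $f$ with $f(C)$ compact, so by Lemma \ref{l:st1} the measure $\mu' := \mu \circ f^{-1}$ is a compactly supported realization of $\ell$.
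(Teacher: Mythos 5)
Your outline matches the paper's: take $C = \overline{\conv\supp\mu}$, show it is trapped on one side of some line, and compactify by a spherical transform via Lemmas~\ref{l:st1} and~\ref{l:st2}. The second half of your argument --- deducing from pointedness of the recession cone $K$ that a suitable line $L$ exists --- is correct and, if anything, cleaner than the paper's: picking $w$ in $\mathrm{int}\,K^\circ$ already forces $v = w^\perp$ to avoid $\pm K$, since $\langle w,v\rangle = 0$ is impossible for $v \in K\setminus\{0\}$ when $w$ lies in the open polar, so the two conditions you need on $L$ are automatically compatible; the paper's phrasing in terms of two non-parallel supporting halfplanes is a little delicate when $C$ is a strip, and your recession-cone formulation handles that case uniformly.

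The proof nevertheless has a genuine gap exactly where you flag it. The claim that $K$ is pointed, equivalently that $C$ contains no full line, \emph{is} the substance of the lemma, and what you offer is a programme rather than an argument. ``One locates support points that approximate the vertices of a suitable ladder of nested convex polygons'' does not say how to locate them, why the hypothesis that $\mu$ charges no line suffices, or why the resulting configuration has peeling depth exceeding $k$; the dichotomy ``straddles $\mathbf{R}v$ or accumulates on it'' is neither made precise nor used. The paper fills this hole with a concrete mechanism: by Steinitz's theorem, every point of $\mathrm{int}(\conv\supp\mu)$ lies in the interior of the convex hull of at most four points of $\supp\mu$; iterating this $k+1$ times from a starting support point (with a perturbation step, using $\mu(L)=0$, to keep the chosen points off any boundary line of $C$) produces a finite subset of $\supp\mu$ of peeling depth at least $k+1$, and the observation that any order type realized inside $\supp\mu$ has positive $\ell_\mu$-probability then yields the contradiction. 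Some such explicit construction is required before your first step can be accepted.
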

\begin{proof}
  Let~$\mu$ be a probability measure that realizes~$\ell$ and let~$C =
  \conv \supp \mu$. If an order type~$\omega$ can be realized by
  points in the support of~$\mu$, then $\ell_\mu(\omega) > 0$. Indeed,
  for a positive and sufficiently small~$\varepsilon$, any perturbation of the points
  of amplitude at most~$\varepsilon$ also realizes~$\omega$. It follows
  that the support of~$\mu$ cannot contain any finite subset with
  peeling depth~$k+1$.

We first argue that $C \neq \R^2$. We proceed by contradiction, so
suppose $C = \R^2$. A theorem of Steinitz~\cite{Helly-interior}
asserts that for any subset~$A \subseteq \R^2$, any point in the
interior of~$\conv A$ belongs to the interior of the convex hull of at
most four points of~$A$. For any point~$p \in \R^2$, there is thus a
quadruple~$Q(p)\subset \supp \mu$ such that $p$ belongs to the
interior of~$\conv Q(p)$. We now define~$Y_1 = \{p_1\}$ where~$p_1$ is
an arbitrary point of~$\supp \mu$, and~$Y_{i+1} = \cup_{p \in Y_i}
Q(p)$ for each~$i\ge 1$. The set $\cup_{i=1}^{k+1} Y_i$ is contained
in the support of~$\mu$ and has peeling depth at least~$k+1$, a
contradiction. Consequently, $C$ cannot be~$\R^2$.

We can also rule out the case where~$C$ is a halfplane. Suppose the
contrary and let~$L$ be the line bounding this halfplane. We can
follow the argument showing that $C \neq \R^2$, except if at some
stage, some point of~$Y_i$ belongs to~$L$. Assume that this is the case
for~$p \in Y_i$, and note that we can assume that $i \ge 2$. There
exists~$p' \in Y_{i-1}$ such that~$p \in Q(p')$; remark that the
peeling depth of~$\cup_{j=1}^i Y_j$ remains at least~$i$ if we
replace~$p$ in~$Y_i$ by any point~$q$ in~$\supp \mu$ such that $p'$
belongs to the interior of~$\conv Q(p') \setminus \{p\}\cup
\{q\}$. There exists~$\varepsilon_1>0$ such that for any~$z \in
B(p,\varepsilon_1)$, the point~$p'$ belongs to the interior of~$\conv
Q(p') \setminus \{p\}\cup \{z\}$. Set~$\tau =
\mu(B(p,\varepsilon_1))$. Since~$\mu(L)=0$, there exists~$\delta>0$ such
that the set of points within distance~$\delta$ of~$L$ has measure at
most~$\varepsilon/2$. The part of~$B(p,\varepsilon_1)$ at distance at
least~$\delta$ from~$L$ therefore has measure at least~$\varepsilon_1/2$.
A dichotomy argument produces a point~$q$ in that part that is in the
support of~$\mu$. We can replace~$p$ by~$q$ in~$Y_i$ and continue with
the construction.  It follows that $C$ cannot be a halfplane.

  For any point~$p \in \partial C$ there is a line supporting~$C$ and
  containing~$p$. Thus, since~$C$ is neither~$\R^2$ nor a halfplane,
  $C$ is contained in the intersection of two non-parallel, closed,
  halfplanes~$H_1$ and~$H_2$. Let~$L$ be a line disjoint from~$H_1 \cap
  H_2$ that is parallel to neither~$\partial H_1$ nor~$\partial
  H_2$. Note that \cref{l:st2} applies, so there is a spherical
  transform~$f$ defined over~$\supp \mu$. \Cref{l:st1} ensures
  that the pushback of~$\mu$ by~$f$ is a measure with compact support
  that defines the same limit of order types as~$\mu$.
\end{proof}

\begin{coro}\label{c:comp}
  If~$\ell_\odot$ is realizable by a measure, then it is realizable by one with compact support.
\end{coro}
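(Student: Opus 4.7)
The plan is to verify that $\ell_\odot$ satisfies the hypothesis of \cref{l:pdc} for some fixed $k$ and then invoke that lemma directly. Concretely, I will show that $\ell_\odot(\omega)=0$ for every order type $\omega$ of peeling depth at least~$3$, and then apply \cref{l:pdc} with $k=2$.

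First I would fix $t\in(0,1)$ and analyse any finite point set $P$ sampled from $\odot_t$. By construction, $P$ splits into an ``outer'' subset $P_1\subset C_1$ and an ``inner'' subset $P_t\subset C_t$, where $C_1$ and $C_t$ are the two concentric circles of radii~$1$ and~$t$. Since $C_t$ lies strictly inside $\conv(C_1)$, every point of $P_1$ is a vertex of $\conv P$; thus a single peeling step removes exactly $P_1$. The remaining set $P_t$ lies on the smaller circle and is therefore in convex position, so a second peeling step removes all of it. Consequently any order type realized by a finite sample from $\odot_t$ has peeling depth at most~$2$, which gives $p(\omega,\odot_t)=0$ for every $\omega$ with peeling depth at least~$3$, and hence $\ell_{\odot_t}(\omega)=0$ for every such $\omega$.

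Taking limits along the subsequence defining $\ell_\odot$, I get $\ell_\odot(\omega)=0$ for every order type of peeling depth at least~$3$. The hypothesis of \cref{l:pdc} with $k=2$ is then satisfied, so if there exists any probability measure $\mu$ with $\ell_\mu=\ell_\odot$, \cref{l:pdc} produces a probability measure with compact support that also realizes~$\ell_\odot$.

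The argument is essentially a transparent verification once the peeling-depth bound for samples from $\odot_t$ is established; no serious obstacle is expected. The only subtle point is making sure the peeling-depth property is preserved under the limit, but this follows because ``peeling depth $\geq 3$'' is a condition depending on finitely many points and hence on a single order type, so $\ell_\odot(\omega)$ is literally the limit of $\ell_{\odot_{1/n}}(\omega)=0$.
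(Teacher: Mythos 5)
Your proof is correct and takes essentially the same approach as the paper: establish that every order type realized by a sample from $\odot_t$ has peeling depth at most $2$, conclude $\ell_\odot$ vanishes on all order types of peeling depth $\geq 3$, and invoke \cref{l:pdc}. One minor imprecision: a single peeling step removes \emph{at least} $P_1$ (it may also remove extreme points of $P$ that lie on the inner circle, e.g.\ when the outer sample is sparse on one side), not ``exactly $P_1$''; but since what remains after one step is still a subset of the inner circle, hence in convex position, the peeling-depth bound of $2$ is unaffected.
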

\begin{proof}
  For any~$t \in (0,1)$, if an order type~$\omega$ has peeling depth at
  least~$3$, then $\ell_{\odot_t}(\omega)=0$. It follows that~$\ell_\odot$ vanishes
  on every order type of peeling depth three or more and therefore \cref{l:pdc} yields
    the conclusion.
\end{proof}

\subsubsection{Excluding realizations with compact support}

Assume by contradiction that there exists a measure~$\mu$ that
realizes~$\ell_\odot$; in particular, $\mu$ charges no line. Let~$P_n$
be a set of~$N=n^2$ random points sampled independently from~$\mu$. By
\cref{l:Pnmu}, the empirical measures of~${(P_n)}_n$ weakly converge to~$\mu$ and by \cref{l:Pnlim},
their order types converge to~$\ell$. We write~$C_n = P_n \cap \partial \conv P_n$ for the extreme
points of~$P_n$ and~$I_n = P_n \setminus C_n$ for the remaining points.

\begin{lemma}\label{l:manytriangles}
  With high probability, $0.49N \le |C_n| \le 0.51N$ and at least~$f(N)$
  edges of~$\conv P_n$ form, with another vertex of~$C_n$, a
  triangle that contains $I_n$, where $\lim_{N \to \infty} f(N) =
  +\infty$.
\end{lemma}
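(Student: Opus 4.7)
The plan is to combine the convergences \cref{l:Pnmu} ($\mu_{P_n}\to\mu$ weakly) and \cref{l:Pnlim} (order types of $P_n$ converge to $\ell_\odot$) with structural rigidity forced by $\ell_\mu=\ell_\odot$. The key observation is that every finite subset of $\supp\odot_t$ has peeling depth at most~$2$ (each point lies on one of two concentric circles), and this property passes to the limit, so $\ell_\odot$ vanishes on every order type of peeling depth $\geq 3$. Hence no finite subset of $\supp\mu$ can have peeling depth $\ge 3$, for otherwise sampling from small $\mu$-positive neighborhoods of such a subset would realize a forbidden order type with positive probability, contradicting $\ell_\mu=\ell_\odot$. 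Setting $C_0:=\conv\supp\mu$ and $C_1:=\conv(\supp\mu\setminus\partial C_0)$, this forces $\supp\mu\subseteq\partial C_0\cup\partial C_1$ with $C_1\subseteq C_0$. The mass split $\mu(\partial C_0)=1/2$ follows by comparing the large-$k$ asymptotics of the expected fraction of extreme points in a $k$-sample: under $\mu$ it tends to $\mu(\partial C_0)$ (outer points are always extreme while inner ones eventually get surrounded), whereas under $\ell_\odot$ it tends to $1/2$ (for samples from $\odot_t$, a $\mathrm{Binomial}(k,1/2)$ number of outer points coincides with the extremes as $t\to 0$).

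Given this decomposition, Part~1 follows from Chernoff--Hoeffding bounds on the i.i.d.\ Bernoulli$(1/2)$ events $\{p_i\in\partial C_0\}$, giving $|P_n\cap\partial C_0|\in[0.495N,0.505N]$ with probability $1-o(1)$. Points of $P_n$ on $\partial C_0$ are automatically extreme, while points on $\partial C_1\setminus\partial C_0$ lie strictly inside $C_0$ at distance at least some $\delta>0$ from $\partial C_0$, so for $N$ large they are surrounded by triangles of outer points and hence non-extreme w.h.p. Therefore $C_n=P_n\cap\partial C_0$ up to a vanishing error, which yields $0.49N\le|C_n|\le 0.51N$.

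For Part~2, the weak convergence $\mu_{P_n}|_{\partial C_0}\to 2\mu|_{\partial C_0}$ implies that the vertices of $\conv P_n$ become dense on $\partial C_0$ as $N\to\infty$, and similarly $\conv I_n\to C_1$. I would then count edges $e=v_iv_{i+1}$ of $\conv P_n$ for which some vertex $v_j\in C_n$ produces a triangle $v_iv_{i+1}v_j$ containing $C_1$ (and hence $I_n$, for $N$ large). Using density of $C_n$ on $\partial C_0$, a positive fraction of unordered triples of $C_n$-vertices produce triangles containing $C_1$; a combinatorial identification of those whose two of three vertices are consecutive along $\partial\conv P_n$ gives $\Omega(|C_n|)$ such edges, furnishing the required $f(N)\to\infty$.

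The principal obstacle lies in Part~2: ensuring $f(N)\to\infty$ edges yield triangles containing the entire $I_n$. Since $I_n$ densifies on $\partial C_1$, each good triangle must contain $C_1$, and the number of such triangles depends delicately on the relative geometry of $C_0$ and $C_1$; a naive argument counting only ``short'' edges or only antipodal partners will not suffice. Pushing this counting through is where the structural rigidity forced by $\ell_\mu=\ell_\odot$ is used to its fullest, and the resulting lower bound is what ultimately drives the contradiction establishing \cref{t:norep}.
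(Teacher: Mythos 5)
Your approach is genuinely different from the paper's, and it has a real gap at the crucial step.

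The paper avoids proving any structural rigidity of the unknown measure~$\mu$. Instead it introduces an equivalent sampling model: it samples $P_n^{1/2}$ from~$\odot_{1/2}$, scales the inner-circle points by~$2t$ to get~$P_n^t\sim\odot_t$, and observes that almost surely there is a (random)~$t^*>0$ below which the order type of~$P_n^t$ stabilizes; the order type of~$P_n^{t^*}$ is then distributed exactly like that of~$P_n$. All subsequent reasoning happens in this concrete two-circle model, where the outer/inner split is Binomial$(N,1/2)$ (Hoeffding gives part~1) and the quantity $s_i=|i-j(i)|$ has an explicit combinatorial description as a partial sum of a random circular word on $\{0,1\}^{2k}$ with the constraint $w_{i+k}=1-w_i$. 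Hoeffding plus a union bound then concentrates every~$s_i$ within $N^{2/3}$ of~$|C_n|/2$, which forces $j(i)$ to take $\Omega(N^{1/3})$ distinct values.

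You instead try to deduce structural rigidity of~$\mu$ itself (support in $\partial C_0\cup\partial C_1$ with $\mu(\partial C_0)=1/2$) and then argue directly with~$P_n$. Even granting the decomposition, your Part~2 as written is not a proof. The sentence ``a combinatorial identification of those whose two of three vertices are consecutive along $\partial\conv P_n$ gives $\Omega(|C_n|)$ such edges'' is a restatement of the goal, not an argument. It is entirely consistent with ``a positive fraction of triples of $C_n$-vertices form a triangle containing $C_1$'' that all such triples use vertices far from one another, so that only $O(1)$ of them have two consecutive vertices; you need some quantitative mechanism forcing the map from vertices to ``opposite'' edges to spread out, and that is precisely what the paper's $s_i$-concentration provides. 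Moreover, the claim that a positive fraction of triples of points on $\partial C_0$ contain~$C_1$ depends on the relative geometry of $C_0$ and $C_1$, which you have not pinned down (you only know $\ell_\mu=\ell_\odot$, not that $C_0,C_1$ are concentric circles). You correctly flag Part~2 as ``the principal obstacle,'' but the remaining work there is essentially the whole content of the lemma. The paper's device of transferring everything to the $P_n^{t^*}$ model is what makes the estimate tractable, and sidesteps both the structural rigidity of~$\mu$ (which your proposal also leaves unproven) and the unknown geometry of~$C_0,C_1$.
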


\noindent
Before we prove \cref{l:manytriangles}, let us see how to use it
by establishing \cref{t:norep}.

\begin{proof}[Proof of \cref{t:norep}]
  Suppose, contrary to the statement, that there exists a measure~$\mu$ that
  realizes~$\ell_\odot$. In particular,~$\mu$ charges no line. By
  \cref{c:comp}, we can assume~$\mu$ to have compact support.
  Let~$\alpha$ be the length of the boundary of~$\conv \supp
  \mu$. Note that~$\alpha<\infty$ since~$\mu$ has compact
  support. Since~$\conv P_n$ is contained in~$\conv \supp \mu$, the
  boundary of~$\conv P_n$ also has length at most~$\alpha$ (this
  follows for instance from the Cauchy-Crofton formula~\cite[Chapter~3,
    $\mathsection~2$]{santalo1953introduction}). \Cref{l:manytriangles} implies that
  there is an edge of~$\conv P_n$ of length at most~$\varepsilon =
  \bigo{\frac{\alpha}{f(N)}}$ that forms, together with another vertex
  of~$C_n$, a triangle containing~$I_n$. Let~$L_n$ be the line
  supporting one of the edges of this triangle that is not
  on~$\partial \conv P_n$. With high probability, at
  least~$0.49|P_n|$ points of~$P_n$ lie within distance at
  most~$\varepsilon$ of~$L_n$. The set of lines intersecting~$\conv \supp
    \mu$ is compact, so the sequence~${(L_n)}_n$ contains a subsequence~${(L_{i(n)})}_n$
  converging to a line~$L$.  Since the empirical measures
    of~${(P_{i(n)})}_n$ converge to~$\mu$, we deduce that $\mu(L) \ge 0.49$, a
  contradiction.
\end{proof}

It remains to prove \cref{l:manytriangles}.

\begin{proof}[Proof of \cref{l:manytriangles}]
  Let us give another model of random point sets whose order-type is
  distributed like~$P_n$. Let~$\Circ_t$ be the circle of radius~$t$
  in~$\R^2$ centered at the origin. We let~$P_n^{1/2}$ be a set
  of~$N=n^2$ random points sampled independently
  from~$\odot_{\frac12}$ and for~$t\in\left(0,\frac12\right)$ we
  let~$P_n^t$ be the point set obtained from~$P_n^{1/2}$ by scaling
  the points on~$\Circ_{1/2}$ from the origin by a
  factor~$2t$. Observe that~$P_n^t$ is distributed like~$N$ points
  sampled independently from~$\odot_t$. Moreover, almost surely, there
  exists (a random variable) $t^* >0$ such that the order types
  of~$P_n^t$ are identical for~$t\in\left(0,t^*\right)$. As~$t\to 0$,
    the sequence~${\left(\ell_{\odot_t}\right)}_t$ converges
  to~$\ell_{\odot} = \ell_\mu$, so the order type of~$P_n^{t^*}$ is
  distributed like $P_n$.

  \bigskip
  
  Let us define~$C_n^{t^*} = P_n^{t^*} \cap \partial \conv P_n^{t^*}$
  and~$I_n^{t^*} = P_n^{t^*} \setminus C_n^{t^*}$. We assert that with
  high probability
    \begin{enumerate}
        \item[(a)] $0.49N \le |P_n^{t^*} \cap \Circ_1| \le 0.51N$; and 
        \item[(b)] $C_n^{t^*} = P_n^{t^*} \cap \Circ_1$.
    \end{enumerate} 
    Since the order types of~$P_n$ and~$P_n^{t^*}$ are identically distributed,
    this claim implies in particular the first part of the statement.

  Assertion~(a) follows from Hoeffding's inequality because the
  repartition of the points of~$P_n^{t^*}$ between~$\Circ_1$ and~$\Circ_{t^*}$ follows a binomial law with parameter~$1/2$. To prove
  Assertion~(b), let us bound the probability that there exists a point
  from~$\Circ_{t^*}$ on~$\partial \conv P_n^{t^*}$. If such a point
  exists, then there is a line through that point that cuts an arc of~$\Circ_1$
      with no point of~$P_n^{t^*}$. Any line through a point of~$\Circ_{t^*}$ cuts
      an arc of~$\Circ_1$ with length at least a
  fraction~$\alpha(t^*)$ of the length of~$\Circ_1$.
    We can now cover~$\Circ_1$ by smaller intervals of
  length~$\alpha(t^*)/2$, and we observe that $P_n^{t^*}$ must miss at
  least one of these smaller intervals. For~$t^*$ small enough,~$8$
  smaller intervals suffice, and~$C_n^{t^*} = P_n^{t^*} \cap \Circ_1$
    with probability least~$1-8 \pth{\frac{15}{16}}^{N}$. This proves our
  assertion.

  \medskip

  Let us now turn our attention to the second part of the
  statement of the lemma. We write~$C_n^{t^*} = \{p_1,p_2, \dotso\}$ where the
  points are labelled in clockwise order along~$\partial \conv P_n^{t^*}$,
  starting in an arbitrary place. We label the points cyclically,
  setting~$p_{i+|C_n^{t^*}|} = p_i$. Almost surely, for every~$i$ the
  antipodal of~$p_i$ is not in~$C_n^{t^*}$ so there exists a unique
  edge~$p_jp_{j+1}$ such that~$p_ip_jp_{j+1}$ contains~$I_n^{t^*}$. We
  set~$j(i)=j$. Our task is to bound from below the number of distinct
  edges~$p_{j(i)}p_{j(i)+1}$.

  Let us associate to every point~$p_i$ the random variables~$s_i =
 |i-j(i)|$. We assert that with high probability
\begin{equation}\label{eq:conc} \frac{|C_n^{t^*}|}2 - N^{\frac23} \le \min_i s_i \le \max_i s_i  \le \frac{|C_n^{t^*}|}2 +
  N^{\frac23}.\end{equation}
    Observe that~\eqref{eq:conc} implies that~$j(i)$ is different for any two
    indices~$i \in \{0,2.1N^{2/3}, 4.1N^{2/3}, \dotsc, (k+0.1)N^{2/3}\}$ where~$k\in\mathbf{N}$
    such that $(k+0.1)N^{2/3}\le0.49N<(k+1.1)N^{2/3}$. It follows
  that with high probability~$j(i)$ takes at least~$\Omega(N^{1/3})$
  different values, proving the second statement.

  It remains to establish~\eqref{eq:conc}. The random
  variable~$s_i$ counts the number of points on one of the halfcircles
  of~$\Circ_1$ bounded by~$p_i$ and its antipodal. Consider a
  different labeling~$C_n^{t^*} = \{q_1,q_2, \dotso\}$ where~$q_1$ is
  an arbitrary point of~$C_n^{t^*}$ and where the lines through the
  origin and~$q_1, q_2, \dotso$ are in clockwise order. Each~$s_i$
  counts the number of~$q_j \neq p_i$ on the clockwise half-circle
  from~$p_i$ to its antipodal. Observe that the position of the lines
  spanned by the points~$q_i$ and their antipodals are irrelevant; in fact,
  fixing the lines arbitrarily and picking each~$q_i$ uniformly among
  the two candidate points on its assigned line leads to the same
  distribution of the variables $s_i$. A purely combinatorial,
  equivalent, description is thus the following. Let~$W = w_1w_2\dotso
  w_{2k}$ be a random circular word on~${\{0,1\}}^{2k}$, where~$k =
  |C_n^{t^*}|$, $w_{i+k} = 1-w_i$ and the letters~$w_1, w_2, \dotsc,
  w_k$ are chosen independently and uniformly in~$\{0,1\}$. The random
  variable $s_i$ has the same distribution as~$w_{i} + w_{i+1} +
  \dotsb + w_{i+k-1}$. Hoeffding's inequality and a union bound then
  yield that with high probability,~$\max_i s_i$ is at most~$\frac{k}2 +
  \littleo{k}$ (and, by symmetry, that~$\min_i s_i$ is at least~$\frac{k}2 -
  \littleo{k}$).
\end{proof}

\subsection{The rigidity theorem}\label{s:rigid}

In this section, we prove Theorem~\ref{thm:proj}, which states that probability
distributions giving rise to the same limit of order types are projectively
equivalent, provided that their support have non-empty interiors.  We proceed
by adapting the notion of kernels to three-variable functions.  This notion
generalizes the notion of geometric limit firstly by allowing probability
measures of any topological space~$J$ instead of probability measures on~$\R^2$
and secondly by using any real-valued function of~$J^3$ instead of the
chirotope of the plane. Our main tool\footnote{This result and its proof are
direct adaptations of similar results on two-dimensional kernels and graphons;
the presentation reaches the proof of Proposition~\ref{prop:big} as directly as
possible, and we refer to the monograph of Lovász~\cite{Lov12} for (a lot) more
details on two-dimensional kernels.} on kernels is \cref{cor:pure}, which
asserts that two kernels have same sampling distribution if and only if they
are isomorphic. We introduce the notions related to kernels and state
\cref{cor:pure} in Section~\ref{s:overviewrigid}, but defer all proofs to
Section~\ref{sec:proof-kernels}, after we have examined kernels defined from
geometric probabilities and order types and deduced~\cref{thm:proj}
(Section~\ref{s:dWgeometric}).

\subsubsection{Rigidity for kernels (overview)}\label{s:overviewrigid}

A \emph{kernel} is a triple~$(J,\mu,W)$ where~$(J,\mu)$ is a
probability space and~$W$ is a measurable map from~$J^3$
to~$\mathbf{R}$ such that $\sup_{J^3}|W| < \infty$. For the sake of
readability,~$W$ can stand for a whole kernel~$(J,\mu,W)$ when there
is no ambiguity about the set and the measure involved. We are, in
particular, interested in kernels based on chirotopes, that is, where
$J=\R^2$, $\mu$ is a measure that charges no line, and~$W$ is a map
to~$\{-1,0,1\}$ given by the orientation.

Our first ingredient is a rigidity theorem for kernels
(Theorem~\ref{cor:pure}), which asserts that if two kernels have the same
density functions (the analogue, for general kernels, of the limit of order
types~$\ell_\mu$ associated with a measure~$\mu$), then they are essentially
equal. We present here the main notions and results, and postpone the proofs to
Section~\ref{sec:proof-kernels}.

\paragraph{Density function.} 

A \emph{step function} over a probability space~$(J,\mu)$ is a
kernel~$(J,\mu,U)$ for which there is a partition~$(P_1,\dotsc,P_m)$
of~$J$ such that the function~$U$ is constant on~$P_i\times P_j\times
P_k$ for every~$(i,j,k)\in{[m]}^3$.  Since the set~$U^{-1}(\{x\})$ is
measurable for every~$x\in\R$, we may always assume that~$P_i$ is
measurable for~$i\in[m]$.

Let~$(J,\mu,W)$ be a kernel and~$n$ a positive integer. Any
$n$-tuple~$S=(x_1,\dotsc,x_n) \in J^n$ defines a function
\[ \mathbf{H}(W,S)\colon \left\{\begin{array}{rcl}{[n]}^3 & \to& \mathbf{R} \\ (i,j,k) & \mapsto & W(x_i,x_j,x_k).\end{array}\right.\]
    We observe that if~$\mu_n^u$ is the counting measure on~$[n]$,
    then~$([n],\mu_n^u,\mathbf{H}(W,S))$ is a step function for the partition
    of~$[n]$ into singletons.\label{page-step} 
We let~$\mathbf{H}(W,n)$ be the random step
function~$\mathbf{H}(W,S)$, where~$S$ is a $\mu^n$-random tuple. 
We define~$\mathcal{H}_n$ to be~$\{H\colon{[n]}^3\to\mathbf{R}\}$.
For every~$H\in\mathcal{H}_n$, the
\emph{density~$t(H,W)$} of~$H$ in~$W$ is the probability that
$\mathbf{H}(W,n) = H$.
We set~$\mathcal{H}=\bigcup_{n\ge1}\mathcal{H}_n$.

\paragraph{Norms and distance.}

A kernel~$(J, \mu, W)$ has associated~$L_1$- and~$L_\infty$-norms
respectively defined by
\[
\norm{W}_1 = \int_{J^3} \abs{W(x,y,z)}\diff\mu(x)\diff\mu(y)\diff\mu(z)
\qquad \text{and} \qquad
\norm{W}_\infty=\sup_{J^3}\abs{W}.
\]
The~$L_1$- and~$L_\infty$-norms define distances only between kernels
with the same underlying probability space. To define a distance
between any two kernels $(J_1,\mu_1,W_1)$ and~$(J_2,\mu_2,W_2)$, we
first map them both to a common probability space by
measure-preserving\footnote{If~$(J_1,\mu_1)$ and~$(J_2,\mu_2)$ are two
  probability spaces, a map~$\rho\colon J_1\to J_2$ is \emph{measure
    preserving} if for every $\mu_2$-measurable set~$A'\subseteq J_2$,
  the set $A=\rho^{-1}(A')$ is $\mu_1$-measurable and
  $\mu_1(A)=\mu_2(A')$.}  maps (like the Gromov-Hausdorff distance
between metric spaces). Specifically, we define
\[
      \dist(W_1,W_2)= \inf \norm{W_1^{\phi_1} - W_2^{\phi_2}}_1
\]
where the infimum is taken over all choices of a probability
space~$(J,\mu)$ and a pair of measure-preserving maps~$\phi_i \colon J
\to J_i$ for~$i\in\{1,2\}$, and where~$W^{\phi}$ is defined by
$W^{\phi}(x,y,z)=W(\phi(x),\phi(y),\phi(z))$.
This infimum does not change if we assume that~$\mu$ is a coupling
measure on~$J=J_1 \times J_2$ and that~$\phi_i$ is the natural
projection of~$J$ on~$J_i$ for~$i\in\{1,2\}$.  Further, a theorem by
Janson~\cite[Theorem~2]{Jan09} ensures that, up to a
measure-preserving map, we can also assume that~$J_i$ is~$[0,1]$
and~$\mu_i$ is the Lebesgue measure for~$i\in\{1,2\}$.

\paragraph{Rigidity of kernels.}

We now turn our attention to rigidity results for kernels. Unless
otherwise specified, all topological hypothesis are to be understood
in~${\R}^2$ endowed with the usual topology; this is in particular the
case when we consider a measure with ``non-empty interior''.
We start with the following.

\begin{theorem}\label{thm:dens-dist}
  If~$(J_1,\mu_1,W_1)$ and~$(J_2,\mu_2,W_2)$ are two kernels such that
    $t(H,W_1)=t(H,W_2)$ for every function~$H\in\mathcal{H}$, then $\dist(W_1,W_2)=0$.
\end{theorem}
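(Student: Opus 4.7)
The plan is to adapt the standard graphon rigidity argument to this three-dimensional kernel setting: the proof factors into two ingredients, namely a \emph{sampling lemma} asserting that the random step function $\mathbf{H}(W,n)$ approaches $W$ in $\dist$ as $n\to\infty$, and a \emph{coupling} argument that extracts the equality of density functions.

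The first step is to establish that for every kernel $(J,\mu,W)$,
\[
\Ex\left[\dist\left(W,\mathbf{H}(W,n)\right)\right]\xrightarrow[n\to\infty]{}0,
\]
where $\mathbf{H}(W,n)$ is viewed as a kernel on $([n],\mu_n^u)$. I would first fix $\varepsilon>0$ and approximate $W$ in $L^1(\mu^3)$ by a step function $W_P$ associated with a finite measurable partition $P=(P_1,\dotsc,P_m)$ of $J$; this is possible because simple functions are dense in $L^1$, and the bound $\norm{W}_\infty<\infty$ controls all error terms. Given an i.i.d.\ sample $S=(x_1,\dotsc,x_n)$ from $\mu$, the partition $P$ pulls back to a partition of $[n]$, and via a measure-preserving map between $([n],\mu_n^u)$ and $(J,\mu)$ that sends the empirical block of size $n_j$ to $P_j$, the step function $\mathbf{H}(W_P,S)$ matches $W_P$ on all ``product'' blocks up to an error controlled by the discrepancies $|n_j/n-\mu(P_j)|$. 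Hoeffding's inequality shows these discrepancies are $O(1/\sqrt{n})$ with high probability, so $\Ex[\dist(W_P,\mathbf{H}(W_P,n))]\to 0$. Combining this with $\norm{W-W_P}_1<\varepsilon$ and the corresponding bound on the $L^1$ distance in expectation between $\mathbf{H}(W,n)$ and $\mathbf{H}(W_P,n)$, the triangle inequality for $\dist$ yields the sampling lemma.

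The second step is a short coupling argument. The assumption $t(H,W_1)=t(H,W_2)$ for all $H\in\mathcal{H}$ is exactly the statement that $\mathbf{H}(W_1,n)$ and $\mathbf{H}(W_2,n)$ have the same distribution on $\mathcal{H}_n$ for every $n$. Consequently, they admit a coupling under which the two random step functions are almost surely equal as kernels on $([n],\mu_n^u)$, and so $\dist(\mathbf{H}(W_1,n),\mathbf{H}(W_2,n))=0$ almost surely under this coupling. The triangle inequality for $\dist$ gives
\[
\dist(W_1,W_2)\le\Ex\left[\dist(W_1,\mathbf{H}(W_1,n))\right]+\Ex\left[\dist(W_2,\mathbf{H}(W_2,n))\right],
\]
and sending $n\to\infty$ and invoking the sampling lemma in each summand yields $\dist(W_1,W_2)=0$.

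The main obstacle is the sampling lemma itself. The coupling step reduces to a one-line observation once the density function $t(\cdot,W)$ is recognized as the law of $\mathbf{H}(W,n)$, but the sampling lemma requires a genuine convergence argument across different underlying probability spaces, which is why the definition of $\dist$ as an infimum over couplings is essential. The $L^\infty$-boundedness hypothesis on the kernels is crucial to control the approximation errors uniformly; without it, both the step-function approximation of $W$ and the empirical approximation of $W_P$ would produce unbounded contributions to the triangle inequality.
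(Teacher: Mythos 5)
Your proposal is correct and follows essentially the same route as the paper: approximate by step functions, control the empirical discrepancies via Hoeffding to get the sampling lemma, and then couple $\mathbf{H}(W_1,n)$ and $\mathbf{H}(W_2,n)$ (which share a law by hypothesis) so that the triangle inequality for $\dist$ closes the argument. The only places where the paper is more explicit than your sketch are the construction of the measure-preserving maps in the sampling lemma (it passes to an atom-free refinement and appeals to Sikorski's theorem to partition the leftover mass) and the passage from the step-function case to general $W$ (which uses the coupling $\mathbf{H}(W,S)-\mathbf{H}(W_P,S)=\mathbf{H}(W-W_P,S)$ on a common sample $S$ together with the expected $L^1$-norm bound), but both of these are exactly what you indicated you would do.
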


Theorem~\ref{thm:dens-dist} makes no assumption on the measure, but the
equality of~$W_1$ and~$W_2$ is only in measure, up to mapping to a common
probability space. Stronger rigidity properties are possible under some
regularity assumption on the kernels.  Define two points~$x$ and~$x'$ in~$J$ to
be \emph{twins for~$(J,\mu,W)$} if $W(x,y,z)=W(x',y,z)$ for $\mu^2$-almost
every pair~$(y,z)\in J^2$. The kernel~$(J,\mu,W)$ is \emph{twin-free} if it
admits no twins.

\begin{proposition}\label{prop:big}
  Let~$(J_1,\mu_1,W_1)$ and~$(J_2,\mu_2,W_2)$ be two twin-free
    kernels.  If $t(H,W_1)=t(H,W_2)$ for every function~$H\in\mathcal{H}$, then there
  exist two sets $N_1\subseteq J_1$ and~$N_2\subseteq J_2$ with
  $\mu_1(N_1)=0=\mu_2(N_2)$ and a bijection~$\rho\colon
  J_1\setminus N_1\to J_2\setminus N_2$ such that~$\rho$ and~$\rho^{-1}$
  are measure preserving, and~$W_1$ and~$W_2^{\rho}$ are equal
  $\mu_1^3$-almost everywhere.
\end{proposition}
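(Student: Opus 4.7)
The plan is to use \cref{thm:dens-dist} to reduce matters to a distance-zero pair of kernels, and then use the twin-free hypothesis to upgrade the resulting ``soft'' identification to a genuine measure-preserving bijection. Applying \cref{thm:dens-dist} directly yields $\dist(W_1,W_2)=0$, which, using the reformulation of the infimum with couplings stated in the text, provides a sequence of coupling measures ${(\gamma_n)}_{n\in\N}$ of $\mu_1$ and $\mu_2$ on $J_1\times J_2$, with canonical projections $\pi_i\colon J_1\times J_2\to J_i$, such that
\[
    \int_{(J_1\times J_2)^3} |W_1(x_1,y_1,z_1)-W_2(x_2,y_2,z_2)|\diff\gamma_n^{3}((x_1,x_2),(y_1,y_2),(z_1,z_2))\to 0.
\]
Since the set of couplings of two probability measures is tight, Prokhorov's theorem ensures that a subsequence of ${(\gamma_n)}_n$ converges weakly to some coupling $\gamma$.

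The main obstacle is to upgrade the $L^1$-convergence above to the pointwise identity
\[
    W_1(x_1,y_1,z_1)=W_2(x_2,y_2,z_2) \quad \text{for $\gamma^3$-almost every $((x_1,x_2),(y_1,y_2),(z_1,z_2))$.}
\]
Because $W_1$ and $W_2$ are only measurable, one cannot pass directly to the limit in the integrand under weak convergence of $\gamma_n$. The workaround I would follow is the standard one: approximate each $W_i$ in the $L^1$-norm by step functions adapted to a fine measurable partition of $J_i$; on step functions the integral above reduces to a finite linear combination of products of measures of rectangles, which is continuous in the coupling for the weak topology (after a routine argument to ensure the rectangles can be chosen with $\gamma$-null boundary). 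Combining this with the vanishing of the original integrals and letting the partitions refine yields the claimed identity.

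Once this $\gamma^3$-almost-everywhere identity is secured, the twin-free hypothesis finishes the argument. Disintegrate $\gamma$ along $\pi_1$ as $\gamma=\int_{J_1}\delta_{x_1}\otimes\gamma_{x_1}\diff\mu_1(x_1)$, where $\gamma_{x_1}$ is a probability measure on $J_2$. If, on a set of $x_1$'s of positive $\mu_1$-measure, the conditional $\gamma_{x_1}$ charged two distinct points $x_2,x_2'\in J_2$, then for each such pair the identity would give $W_2(x_2,y_2,z_2)=W_1(x_1,y_1,z_1)=W_2(x_2',y_2,z_2)$ for $\mu_2^2$-almost every $(y_2,z_2)$, contradicting the twin-freeness of $W_2$. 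Hence, for $\mu_1$-almost every $x_1$, the measure $\gamma_{x_1}$ is a Dirac mass at some point $\rho(x_1)\in J_2$, so $\gamma$ is concentrated on the graph of a measurable map $\rho$. Running the symmetric argument on the disintegration along $\pi_2$ using the twin-freeness of $W_1$ shows that $\rho$ is essentially injective. After removing the $\mu_1$- and $\mu_2$-null sets $N_1\subseteq J_1$ and $N_2\subseteq J_2$ on which $\rho$ fails to be defined or injective, one obtains a measurable bijection $\rho\colon J_1\setminus N_1\to J_2\setminus N_2$; since $\gamma$ has marginals $\mu_1$ and $\mu_2$ and is concentrated on the graph of $\rho$, both $\rho$ and $\rho^{-1}$ are measure-preserving, and the $\gamma^3$-almost-everywhere identity translates precisely to $W_1=W_2^{\rho}$ $\mu_1^3$-almost everywhere.
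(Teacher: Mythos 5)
Your high-level strategy coincides with the paper's: pass through \cref{thm:dens-dist} to get $\dist(W_1,W_2)=0$, show the infimum in $\dist$ is attained by an actual coupling, and use twin-freeness to collapse that coupling to (the graph of) a measure-preserving bijection. Within this shared skeleton you make two implementation choices that differ from the text. First, to attain the infimum you appeal directly to tightness/Prokhorov and then a step-function approximation to get lower semicontinuity, whereas the paper isolates this as \cref{lem:phi}, first replacing $(J_i,\mu_i)$ by $([0,1],\text{Leb})$ via Janson's theorem and then citing Lov\'asz's argument. This reduction step is not cosmetic: in the paper's generality $(J_i,\mu_i)$ is an abstract probability space with no topology, so ``tightness,'' ``weak convergence,'' ``$\gamma$-null boundary'' and ``disintegration of $\gamma$'' are not available until you embed into a standard Borel space --- that is the one genuine gap in your sketch, and Janson's theorem is precisely what plugs it. Second, you extract $\rho$ by disintegrating $\gamma$ and showing the conditionals are Dirac, while the paper argues set-theoretically that $\phi_2(\phi_1^{-1}(\{x\}))$ is a singleton; the two routes are equivalent, though your phrasing ``if $\gamma_{x_1}$ charged two distinct points'' is slightly loose, since $\gamma_{x_1}$ may have no atoms --- the clean statement is that the set of $x_2$ for which $W_2(x_2,\cdot,\cdot)$ agrees $\mu_2^2$-a.e.\ with the push-forward of $W_1(x_1,\cdot,\cdot)$ has $\gamma_{x_1}$-measure one yet, by twin-freeness, at most one element. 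The disintegration phrasing is arguably more transparent than the paper's $\phi_1/\phi_2$ bookkeeping, at the price of needing the standard-Borel reduction up front.
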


For even more regular kernels, there are in fact distances on~$J_1$ and~$J_2$
such that a measure-preserving isometry exists. Specifically, the
\emph{neighborhood pseudo-distance} of a kernel~$(J,\mu,W)$ is the
function~$d_W\colon J \times J\to\mathbf{R}^+$ given by

\begin{equation}\label{eq:dW}
  \begin{aligned}
    \forall (x,x')\in J^2,\qquad d_W(x,x') &= \norm{W(x,\cdot,\cdot) -
      W(x',\cdot,\cdot)}_1\\ &=
    \int_{J^2}\abs{W(x,y,z)-W(x',y,z)}\diff\mu(y)\diff\mu(z).
  \end{aligned}
\end{equation}

\noindent
It is straightforward to check that~$d_W$ is reflexive and satisfies the
triangular inequality, so it is a distance if and only if it is separated,
\emph{i.e.}, if $(J,W,\mu)$ is twin-free.

A kernel~$(J,\mu,W)$ is \emph{pure} if (i) the map~$d_W$ is a distance on~$J$,
(ii) the metric space~$(J,d_W)$ is complete and separable, and (iii) the
measure~$\mu$ has \emph{full support}, that is, every non-empty open set
of~$(J,d_W)$ has positive $\mu$-measure.

\begin{theorem}\label{cor:pure}
  Let~$(J_1,\mu_1,W_1)$ and~$(J_2,\mu_2,W_2)$ be two pure kernels. If
    $t(H,W_1)=t(H,W_2)$ for every function~$H\in\mathcal{H}$, then there exists an
  isometry $\rho \colon(J_1, d_{W_1}) \to(J_2, d_{W_2})$ such that~$\rho$
    and~$\rho^{-1}$ are measure preserving, and~$W_1$ and~$W_2^{\rho}$ are
    equal~$\mu_1^3$-almost everywhere.
\end{theorem}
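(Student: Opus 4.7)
The plan is to derive \cref{cor:pure} from \cref{prop:big} by promoting the measure-preserving bijection between full-measure subsets into a genuine isometry defined on the whole spaces. All three defining features of purity enter: full support makes the full-measure subset topologically dense, completeness allows an isometry on a dense set to be extended uniquely, and separability keeps Borel measure-theoretic reasoning routine.

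First I invoke \cref{prop:big} to obtain null sets $N_1\subseteq J_1$ and $N_2\subseteq J_2$ together with a measure-preserving bijection $\rho\colon J_1\setminus N_1\to J_2\setminus N_2$ whose inverse is measure-preserving and which satisfies $W_1(x,y,z)=W_2(\rho(x),\rho(y),\rho(z))$ for $\mu_1^3$-almost every triple. From now on the task is purely metric. A Fubini argument applied to the $\mu_1^3$-null exceptional set where $W_1\ne W_2^\rho$ yields a subset $A\subseteq J_1\setminus N_1$ of full $\mu_1$-measure such that for every $x,x'\in A$ the equality $W_1(\cdot,y,z)=W_2(\rho(\cdot),\rho(y),\rho(z))$ holds with both choices of first argument for $\mu_1^2$-almost every pair $(y,z)$. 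Combining these statements with the measure preservation of $\rho$ gives
\[
d_{W_1}(x,x')=\int_{J_2^2}\abs{W_2(\rho(x),y',z')-W_2(\rho(x'),y',z')}\diff\mu_2(y')\diff\mu_2(z')=d_{W_2}(\rho(x),\rho(x')),
\]
so $\rho|_A$ is an isometry from $(A,d_{W_1})$ onto $(\rho(A),d_{W_2})$.

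Next I extend $\rho|_A$ by continuity. Because $\mu_1$ has full support in $(J_1,d_{W_1})$, the full-measure set $A$ is dense in $J_1$; since measure preservation forces $\mu_2(\rho(A))=1$, the set $\rho(A)$ is likewise dense in $(J_2,d_{W_2})$. An isometry between dense subsets of two complete metric spaces extends uniquely to a bijective isometry $\bar\rho\colon J_1\to J_2$: injectivity is automatic, and surjectivity follows by lifting any Cauchy sequence in $\rho(A)$ approximating $y\in J_2$ to a Cauchy sequence in $A$ through $(\rho|_A)^{-1}$, and then invoking completeness of $(J_1,d_{W_1})$.

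Finally I verify that $\bar\rho$ is measure preserving and that $W_1$ and $W_2^{\bar\rho}$ agree $\mu_1^3$-almost everywhere. Both properties follow because $\bar\rho$ coincides with $\rho$ on the full-measure set $A$: for any Borel set $B\subseteq J_2$, the decomposition $\bar\rho^{-1}(B)=(\bar\rho^{-1}(B)\cap A)\cup(\bar\rho^{-1}(B)\setminus A)$ and the $\mu_2$-conullness of $\rho(A)$ give $\mu_1(\bar\rho^{-1}(B))=\mu_1(\rho^{-1}(B\cap\rho(A)))=\mu_2(B\cap\rho(A))=\mu_2(B)$, and the same argument applied to $\bar\rho^{-1}$ handles the other direction; the kernel equation transfers from $\rho$ to $\bar\rho$ on the $\mu_1^3$-conull set $A^3$. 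The main obstacle will be the Fubini bookkeeping leading to the displayed equation: one must arrange that a single set $A$ simultaneously supports the isometric and the measure-preserving conditions, rather than juggling several full-measure subsets and then intersecting them. Once $A$ is chosen carefully, the remaining work is a routine application of density, completeness and Borel measure theory, and the genuine content of the rigidity statement is already captured by \cref{prop:big}.
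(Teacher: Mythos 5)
Your proof is correct and follows essentially the same route as the paper: invoke \cref{prop:big} to get a measure-preserving bijection $\rho$ between conull subsets, use a Fubini argument to trim down to a conull set $A$ on which the $d_{W_1}$-to-$d_{W_2}$ isometry identity holds, then extend by density (from full support) and completeness. The only cosmetic difference is that you construct the inverse of the extended isometry directly by lifting Cauchy sequences through $(\rho|_A)^{-1}$, whereas the paper obtains it by rerunning the same argument with the roles of $(J_1,\mu_1,W_1)$ and $(J_2,\mu_2,W_2)$ swapped; both devices are standard and equally valid.
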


\subsubsection{Kernels based on chirotopes}\label{s:dWgeometric}

We now deduce Theorem~\ref{thm:proj} from Theorem~\ref{cor:pure} by studying
kernels coming from chirotopes. We let~$\chi$ be the map that sends a triple of
points in~${\R}^2$ to their orientation. We consider kernels of the form $(\supp
\mu, \mu, \chi_{|\supp \mu})$, and let~$d_{\chi,\mu}$ be the associated
neighborhood pseudo distance.\footnote{We depart here from the notation
introduced in Equation~\eqref{eq:dW} because we manipulate several kernels
based on the function~$\chi$, so we need to lift the ambiguity.}

\paragraph{Purity.}

Given two distinct points~$a$ and~$b$ in~${\mathbf{R}}^2$, we
let~$(ab)$ be the line through~$a$ and~$b$.

\begin{lemma}\label{lem:notwin}
    Let~$\mu$ be a measure on~${\R}^2$.  If~$\supp\mu$ has non-empty
  interior, then the kernel~$(\supp\mu,\mu,\chi)$ is twin-free.
\end{lemma}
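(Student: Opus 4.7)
My plan is to prove the lemma by contradiction: I will assume that there exist distinct twin points~$x, x' \in \supp\mu$, write~$\ell$ for the line through them, and produce a set of pairs~$(y,z) \in (\supp\mu)^2$ of positive~$\mu^2$-measure on which $\chi(x,y,z) \neq \chi(x',y,z)$.

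The first step I would carry out is a purely geometric one. For any~$y \in \R^2 \setminus \ell$, the lines~$(xy)$ and~$(x'y)$ are distinct and meet only at~$y$, so their complement is a union of four open wedges with apex~$y$. Because $\chi(x,y,\cdot)$ changes sign across~$(xy)$ while $\chi(x',y,\cdot)$ changes sign across~$(x'y)$, the map $z\mapsto(\chi(x,y,z),\chi(x',y,z))$ takes each of the four values of $\{-1,+1\}^2$ on exactly one of these wedges, so the set
\[ R(y) = \{z \in \R^2 : \chi(x,y,z) \neq \chi(x',y,z)\} \]
is the union of two wedges opposite to each other across~$y$. In particular $R(y)$ is open and contains two open sectors touching the apex~$y$.

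The second step is to combine this with the hypotheses. Since $\supp\mu$ has non-empty interior, it contains an open ball~$B$, and since $\mu$ charges no line we have $\mu(B \setminus \ell) = \mu(B) > 0$ (using the standard fact that every non-empty open set that meets $\supp\mu$ has positive $\mu$-measure). For each $y \in B \setminus \ell$, the ball~$B$ is an open neighborhood of the apex~$y$, so $R(y) \cap B$ is a non-empty open subset of $B \subseteq \supp\mu$ and therefore has positive $\mu$-measure. Applying Fubini to the Borel-measurable set
\[ A = \{(y,z) \in B^2 : \chi(x,y,z) \neq \chi(x',y,z)\} \]
I would conclude
\[ \mu^2(A) \ \ge\ \int_{B \setminus \ell} \mu(R(y) \cap B)\diff\mu(y) \ >\ 0, \]
contradicting that $x$ and $x'$ are twins.

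The only point that needs a bit of care is the geometric description of~$R(y)$ as a pair of opposite open wedges, which hinges on the distinctness of the lines $(xy)$ and $(x'y)$ and hence on $x \neq x'$. Everything else is a clean combination of the definitions of topological support and of twin points with Fubini's theorem; in particular, no hypothesis beyond what is stated is used.
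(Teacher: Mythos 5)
Your proof is correct, but it follows a genuinely different route from the paper's. You fix a putative pair of twins $(x,x')$ and do a fiber-by-fiber analysis: for each $y$ off the line $\ell=(xx')$, the disagreement set $R(y)=\{z:\chi(x,y,z)\neq\chi(x',y,z)\}$ is a pair of opposite open wedges with apex $y$, hence meets every neighborhood of $y$, in particular any open ball $B\subseteq\supp\mu$ containing $y$; integrating over $y\in B\setminus\ell$ via Tonelli then yields a positive $\mu^2$-measure of disagreement. The paper argues in the other direction: it fixes two distinct points $a,b\in\supp\mu$, finds a line $L$ through $B$ that separates $a$ from $b$, and perturbs two points of $L\cap B$ into small balls $B(x,\epsilon)$ and $B(y,\epsilon)$ inside $B$, so that every line through one point of each ball still separates $a$ from $b$; the disagreement set then already contains a product of two open balls. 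The paper's perturbation argument is shorter and avoids Tonelli; your wedge picture takes a bit more care to set up cleanly but gives a more complete description of where the disagreement lives. One small correction is in order: you write that ``no hypothesis beyond what is stated is used,'' yet in step two you appeal to ``$\mu$ charges no line'' (a standing assumption in the surrounding discussion, but not one appearing in the statement of the lemma) to assert $\mu(B\setminus\ell)=\mu(B)$. Luckily this is superfluous: $B\setminus\ell$ is a non-empty open subset of $\supp\mu$, so it has positive $\mu$-measure directly from the definition of the support, which is all your integration step actually needs.
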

\begin{proof}
   Let~$B$ be an open ball contained in~$\supp \mu$, and consider two
   distinct points~$a$ and~$b$ in~$\supp \mu$. There exists a line
   separating~$a$ from~$b$ and intersecting~$B$ (for otherwise~$B$
   would have to be contained in the line~$(ab)$).  Let~$x$ and~$y$ be
   two distinct points in~$(ab)\cap B$. By continuity of the
   determinant expressing orientations, there exists a positive
   real~$\epsilon$ such that any line secant to both~$B(x,\epsilon)$
   and~$B(y,\epsilon)$ also separates~$a$ from~$b$.  This implies that
   for any~$x' \in B(x,\epsilon)$ and~$y' \in B(y,\epsilon)$, we have
   $\chi(x',y',a) \neq \chi(x',y',b)$. Since $\mu^2\pth{B(x,\epsilon)
     \times B(y,\epsilon)}$ is positive, it follows that~$a$ and~$b$
   are not twins.
\end{proof}

\paragraph{Topologies.}

By Lemma~\ref{lem:notwin}, if the support of a measure~$\mu$
  has non-empty interior, then the
  kernel~$(\supp\mu,\mu,\chi)$ is twin-free and~$d_{\chi,\mu}$ is a
  distance. To show that this kernel is pure, it remains to control
  the topology induced by~$d_{\chi,\mu}$ on~$\supp \mu$.

\begin{lemma}\label{lem:dw:eq}
    Let~$\mu$ be a compactly supported measure that charges no line.
  If~$\supp\mu$ has non-empty interior, then the topologies induced
  on~$\supp\mu$ by~$d_{\chi,\mu}$ and~$\|.\|_2$ are equivalent.
\end{lemma}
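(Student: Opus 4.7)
The plan is to establish each of the two topological comparisons separately; concretely, I would show that (i) $\|\cdot\|_2$-convergence implies $d_{\chi,\mu}$-convergence, and (ii) $d_{\chi,\mu}$-convergence implies $\|\cdot\|_2$-convergence, using compactness of the support for the latter.

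For (i), take a sequence $(x_n)_n$ in $\supp \mu$ converging in $\|\cdot\|_2$ to some $x \in \supp \mu$. Fix any pair $(y,z) \in (\supp\mu)^2$ such that the three points $x,y,z$ are not collinear; since the orientation $\chi$ is the sign of a determinant depending continuously on $x$, and the nonzero value of this determinant is preserved under small perturbations, one has $\chi(x_n,y,z) = \chi(x,y,z)$ for all sufficiently large $n$. The key observation is that, by the hypothesis that~$\mu$ charges no line, for $\mu$-almost every~$y$ the set of $z$ collinear with $x$ and $y$ lies on a line and so has $\mu$-measure zero; Fubini then shows that the collinear triples $(y,z)$ form a $\mu^2$-null subset of $(\supp\mu)^2$. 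Since $|\chi|\le 1$ and $\mu$ is finite, dominated convergence applied to $|\chi(x_n,\cdot,\cdot) - \chi(x,\cdot,\cdot)|$ yields $d_{\chi,\mu}(x_n,x)\to 0$.

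For (ii), I would argue by contradiction using compactness: suppose $(x_n)_n$ in $\supp\mu$ converges to $x$ in $d_{\chi,\mu}$ but not in $\|\cdot\|_2$. Then there exists a subsequence $(x_{n_k})_k$ staying at Euclidean distance at least some $\varepsilon>0$ from $x$. Because $\supp\mu$ is compact (as $\mu$ is compactly supported), a further subsequence, still denoted $(x_{n_k})_k$, converges in $\|\cdot\|_2$ to some $x'\in\supp\mu$ with $\|x-x'\|_2\ge\varepsilon$, so $x\neq x'$. By part (i), $d_{\chi,\mu}(x_{n_k},x')\to 0$; combining with $d_{\chi,\mu}(x_{n_k},x)\to 0$ and the triangle inequality gives $d_{\chi,\mu}(x,x')=0$. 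But Lemma \ref{lem:notwin} asserts that $(\supp\mu,\mu,\chi)$ is twin-free whenever $\supp\mu$ has non-empty interior, which means $d_{\chi,\mu}(x,x')>0$ for any distinct points of $\supp\mu$. This contradiction proves the claim.

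The two directions together yield equivalence of the topologies. I expect the main delicate point to be the dominated convergence step in (i): one needs precisely the hypothesis that $\mu$ charges no line so that the pointwise convergence of $\chi(x_n,\cdot,\cdot)$ to $\chi(x,\cdot,\cdot)$ holds $\mu^2$-almost everywhere; neither the non-empty interior hypothesis nor compactness of the support enters here, they serve respectively in the twin-free input for (ii) and in extracting a convergent sub-subsequence in the $\|\cdot\|_2$ topology.
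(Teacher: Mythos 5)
Your proof follows essentially the same route as the paper's: one direction by pointwise almost-everywhere convergence of the orientation function (using that $\mu$ charges no line) plus dominated convergence, and the other direction by extracting a $\|\cdot\|_2$-convergent subsequence from compactness of $\supp\mu$ and invoking twin-freeness (\cref{lem:notwin}) to get a contradiction. The only difference is that you spell out the Fubini step for why the collinear pairs form a $\mu^2$-nullset, which the paper states without elaboration; otherwise the arguments coincide.
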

\begin{proof}
    We first remark that~$d_{\chi,\mu}$ is a distance thanks to
    Lemma~\ref{lem:notwin}. We next argue that~$d_{\chi,\mu}$ is continuous
    toward~$\|.\|_2$. To do so, we consider a sequence $(x_n)$ of points
  in $\R^2$ converging to some point $x$ in the sense that $\lim_{n\to
    \infty} \|x_n-x\|_2 = 0$, and we prove that we also have $\lim_{n
    \to \infty} d_{\chi,\mu}(x_n,x)= 0$. For every pair~$(y,z) \in
  \supp\mu^2$ such that~$x$, $y$ and~$z$ are not aligned, the set of
  points~$x' \in \mathbf{R}^2$ satisfying the equation
    $\chi(x,y,z)=\chi(x',y,z)$ is an open half-space
  containing~$x$.  Consequently,~$\chi(x_n,y,z)$ is eventually
  equal to~$\chi(x,y,z)$.  It follows that the sequence $f_n(y,z)
  = \chi(x,y,z)-\chi(x_n,y,z)$ tends to~$0$ for $\mu^2$-almost
  every pair~$(y,z) \in \supp\mu^2$.  Indeed, this last statement is
  true if~$x$, $y$ and~$z$ are not aligned and the set of
  pairs~$(y,z)$ aligned to~$x$ is a $\mu^2$-nullset because~$\mu$ does
  not charge lines.  We conclude by an application of the dominated
  convergence theorem that the sequence $d_{\chi,\mu}(x_n,x)
  =\int_{\supp\mu^2} f_n(y,z) \diff\mu(y,z)$ tends to~$0$.

   To conclude, it suffices to prove that if $d_{\chi,\mu}(x_n,x)$ tends
   to~$0$ for some sequence ${(x_n)}_{n\in\N}\in\supp\mu^{\N}$ and
   $x\in\supp\mu$ then $\|x_n-x\|_2$ tends to~$0$ when~$n$ goes to
   infinity. We use a classical compactness argument.  Assume for the
   sake of contradiction that there is a sequence
   ${(d_{\chi,\mu}(x_n,x))}_{n\in\N}$ that tends to~$0$ while
   ${(\|x_n-x\|_2)}_{n\in\N}$ does not.  Since~$\supp\mu$ is compact,
   it is possible to extract a subsequence~${(x_{\phi(n)})}_{n\in\N}$
   that converges with respect to~$\|.\|_2$ to a limit~$y\in\supp\mu$
   different from~$x$. Then, as seen above,
   $d_{\chi,\mu}(x_{\phi(n)},y)\to 0$ and~$d_{\chi,\mu}(x,y)=0$.
Since~$d_{\chi,\mu}$ is a distance, it follows that~$x=y$, which yields a
contradiction.
\end{proof}

\begin{coro}\label{cor:chi-pur}
    If~$\mu$ is a measure that charges no line and such
    that~$\supp\mu$ is compact and has non-empty interior in~${\R}^2$, then the
  kernel~$(\supp\mu,\mu,\chi)$ is pure.
\end{coro}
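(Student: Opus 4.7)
The plan is to verify the three defining properties of purity, using the two preceding lemmas as the main workhorses. Property (i), that $d_{\chi,\mu}$ is a distance on $\supp\mu$, is immediate from \cref{lem:notwin}: since $\supp\mu$ has non-empty interior, the kernel is twin-free, which is exactly the condition for the pseudo-distance $d_{\chi,\mu}$ to separate points.

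For property (ii), I would invoke \cref{lem:dw:eq}, which says the topologies on $\supp\mu$ induced by $d_{\chi,\mu}$ and by $\|\cdot\|_2$ coincide. Since~$\supp\mu$ is compact in $\mathbf{R}^2$ by hypothesis, the transferred topology makes $(\supp\mu, d_{\chi,\mu})$ compact as well. Any compact metric space is automatically complete and separable, so (ii) follows at once.

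For property (iii), I need to show that every non-empty open set $U$ of $(\supp\mu, d_{\chi,\mu})$ satisfies $\mu(U)>0$. Again using \cref{lem:dw:eq}, such a $U$ is also open in $\supp\mu$ for the topology induced by $\|\cdot\|_2$, so there is an open set $V\subseteq\mathbf{R}^2$ with $U = V\cap\supp\mu$. Because $U$ is non-empty, $V$ meets $\supp\mu$; by the very definition of the support of a Borel measure on $\mathbf{R}^2$, this forces $\mu(V)>0$. Since $\mu$ is concentrated on $\supp\mu$, we have $\mu(V)=\mu(V\cap\supp\mu)=\mu(U)$, and the full-support condition is established.

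There is no genuine obstacle here: the corollary is essentially a repackaging of \cref{lem:notwin} and \cref{lem:dw:eq} together with the elementary facts that compact metric spaces are complete and separable, and that a Borel measure on $\mathbf{R}^2$ gives positive mass to every open set intersecting its support. The only point that requires a moment of care is matching the notion of ``open set of $(J,d_W)$'' in the abstract definition of purity with the relatively open subsets of $\supp\mu$ for the Euclidean topology, which is exactly what \cref{lem:dw:eq} provides.
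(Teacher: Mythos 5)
Your proof is correct and follows essentially the same route as the paper: twin-freeness from Lemma~\ref{lem:notwin} gives that $d_{\chi,\mu}$ is a distance, Lemma~\ref{lem:dw:eq} transfers the topology, and completeness/separability/full support follow. (If anything, your invocation of compactness to get completeness is slightly more careful than the paper's phrasing, since completeness is not a topological invariant while compactness is; both are equally valid.)
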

\begin{proof}
By Lemma~\ref{lem:notwin}, we already know that~$(\supp\mu,\mu,\chi)$ is
    twin-free, and therefore that~$d_{\chi,\mu}$ is a distance. By
    Lemma~\ref{lem:dw:eq}, this distance defines the same topology on~$\supp
    \mu$ as the Euclidean distance, so~$(\supp \mu,d_{\chi,\mu})$ is complete
    and separable. Moreover, any open ball for~$d_{\chi,\mu}$ contains an open
ball for~$\|\cdot \|_2$ so~$\mu$ has full support.
\end{proof}

\paragraph{Alignments.}

To study how the map $\rho \colon\supp \mu_1 \to \supp \mu_2$ provided by
Theorem~\ref{cor:pure} transports alignments, we reformulate alignments in
topological terms.  Let us define the sets
\[ T^+ = \{(a,b,c) \in {(\R^2)}^3 \colon \chi(a,b,c)=1 \} \quad \text{and}
\quad T^- = \{(a,b,c) \in {(\R^2)}^3 \colon \chi(a,b,c)=-1 \}.\]
We define~$\cll(X)$ and~$\bll X$ to be the topological closure and the
boundary of a set~$X$ for the topology induced by the Euclidean distance,
respectively. From the expression of the orientation of a triple of points as a
determinant, it comes
  \begin{equation}\label{eq:aligned} \begin{aligned} \{(a,b,c) \in {(\R^2)}^3
  \colon \chi(a,b,c)=0 \} & = \bll T^+ = \bll T^-\\ & = \cll(T^+)\setminus T^+
  = \cll(T^-)\setminus T^- = \cll(T^+) \cap \cll(T^-).  \end{aligned}
  \end{equation}
We use a local version of this characterization of alignment, where we
restrict~$T^+$ or~$T^-$ to the support of a measure~$\mu$. Note that ${(\supp
\mu)}^3$ may contain isolated aligned triples or other pathologies, so we have
to take some care; we again rely on the assumption that $\supp \mu$ has
non-empty interior.

\begin{lemma}\label{lem:local-aligned}
  Let~$X$ be a subset of~$\R^2$ that contains an open ball~$B$. A
  triple $t=(x,y,z)\in X^3$ with~$x \in B$ is aligned if and only if
  it is in~$\cll(T^+ \cap X^3) \cap \cll(T^- \cap X^3)$.
\end{lemma}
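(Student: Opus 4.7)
The backward direction is immediate from Equation~\eqref{eq:aligned}: since $T^\pm\cap X^3\subseteq T^\pm$, any triple in $\cll(T^+\cap X^3)\cap\cll(T^-\cap X^3)$ lies in $\cll(T^+)\cap\cll(T^-)$, which coincides with the set of aligned triples of~${(\R^2)}^3$. The forward direction is harder, and my plan is to use the open ball~$B$ as a reservoir of perturbations of~$x$, together with a case analysis based on coincidences among $x,y,z$.

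The principal case is $y\neq z$. Since $(x,y,z)$ is aligned, the line $L=(yz)$ passes through~$x$; and since~$B$ is an open ball with $x\in L\cap B$, both open half-planes bounded by~$L$ contain points of~$B$ arbitrarily close to~$x$. I would therefore pick sequences $x_n^\pm\in B$ converging to~$x$ with $\chi(x_n^\pm,y,z)=\pm 1$, and using $B\subseteq X$ together with $y,z\in X$, the triples $(x_n^\pm,y,z)$ lie in $T^\pm\cap X^3$ and converge to~$(x,y,z)$, which yields the desired approximation. The sub-cases $x\in\{y,z\}$ or $x=y=z$ reduce to this one, because then~$y$ or~$z$ already lies in~$B$, so that small triangles inside~$B$ of prescribed orientation provide the approximations.

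The main obstacle is the degenerate case $y=z\neq x$, where perturbing~$x$ alone never changes the orientation, which is always zero. My plan is to also perturb the third coordinate: set $y_n=y$ and pick $z_n\in X\setminus\{y\}$ with $z_n\to y$, and then choose $x_n\in B$ close to~$x$ so that $\chi(x_n,y,z_n)=\pm 1$. Controlling the sign amounts to arranging that the rotating lines~$(yz_n)$ place~$x$ on both sides, which forces the $z_n$ to approach~$y$ from both open half-planes bounded by the line~$(xy)$. This directional abundance is where the argument needs extra input beyond~$X\supseteq B$: in the intended applications, where $X=\supp\mu$ for a measure~$\mu$ charging no line, it follows from~$\mu$ assigning positive mass to every neighbourhood of~$y$ together with the $\mu$-nullity of~$(xy)$, and I would complete the case using this measure-theoretic input.
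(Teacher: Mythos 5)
Your argument for the backward direction and for the main case $y\neq z$ is exactly the paper's: perturb $x$ across the line $(yz)$ inside $B$. The paper's proof says no more than that and, in particular, tacitly assumes $y\neq z$ by invoking ``the line $(yz)$''; you are right to worry about what happens otherwise. Your handling of $x\in\{y,z\}$ and $x=y=z$ is sound (indeed, for $x\in\{y,z\}$ with $y\neq z$ the paper's own perturbation of $x$ already works, no reduction needed).

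You correctly single out $y=z\neq x$ as the problematic case, and the lemma as stated \emph{is} false there for general $X$: take $X=B\cup\{y\}$ with $y\notin\cll B$. Then every triple of $X^3$ near $(x,y,y)$ has the form $(x',y,y)$ and is aligned, so $(x,y,y)$ is not in $\cll(T^+\cap X^3)$. However, your proposed measure-theoretic fix does not quite close this gap. You want $z_n\in\supp\mu\setminus\{y\}$ approaching $y$ from both sides of $(xy)$; but $\mu$ charging no line together with $y\in\supp\mu$ only rules out $\supp\mu$ being locally contained in $(xy)$ --- it does not rule out $\supp\mu$ lying locally in a half-plane or convex corner for which $(xy)$ is a supporting line at $y$, in which case all nearby points of the support sit on one side. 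The clean repair is simpler and needs no ``both sides'': once you have a single $z'\in X$ near $y$ with $z'\notin(xy)$, the two triples $(x,y,z')$ and $(x,z',y)$ both lie in $X^3$, both converge to $(x,y,y)$, and have opposite nonzero orientations, so you land in $T^+\cap X^3$ and $T^-\cap X^3$ simultaneously. For $X=\supp\mu$ with $\mu$ charging no line such a $z'$ always exists, since otherwise $\mu$ would give positive mass to $(xy)$. In any case, the paper applies the lemma only to triples $(x_i,y,z_i)$ with $x_i\neq z_i$ and $y$ chosen distinct from both, so the degenerate case never arises in the rigidity argument; the lemma's statement is slightly stronger than what the authors actually prove or need, and your instinct to flag this was correct, but the omission is harmless for the paper's purposes.
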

\begin{proof}
  If~$t$ is aligned, then there exist~$x'$ and~$x''$ both in~$B$
  arbitrarily close to~$x$, and separated by the
  line~$(yz)$. It follow that exactly one of $(x',y,z)$ and
  $(x'',y,z)$ belongs to $T^+ \cap X^3$, and the other belongs to $T^-
  \cap X^3$. This implies that $t \in \cll(T^+\cap X^3) \cap
  \cll(T^-\cap X^3)$. The other direction follows from the fact that
    \[ \cll(T^+ \cap X^3) \cap \cll(T^- \cap X^3) \subseteq \cll(T^+) \cap \cll(T^-)\]
   and Equation~\eqref{eq:aligned}.
\end{proof}

\paragraph{Proof of Theorem~\ref{thm:proj}.} 

We can now prove that~if $\mu_1$ and~$\mu_2$ are two compactly
supported measures of~$\mathbf{R}^2$ that charge no line, whose
supports have non-empty interiors, and such that
$\ell_{\mu_1}=\ell_{\mu_2}$, then there exists a projective
transformation~$f$ such that $\mu_2\circ f=\mu_1$.

\begin{proof}[Proof of  Theorem~\ref{thm:proj}]
By Corollary~\ref{cor:chi-pur}, the kernels~$(\supp\mu_i,\mu_i,\chi)$ are pure.
    Thus, by Theorem~\ref{cor:pure}, there exists a measure-preserving
    isometry~$\rho \colon(\supp \mu_1, d_{\chi,\mu_1}) \to(\supp \mu_2,
    d_{\chi,\mu_2})$ such that~$\chi$ and~$\chi^{\rho}$ are equal
    $\mu_1^3$-almost everywhere. Moreover, Lemma~\ref{lem:dw:eq} ensures
    that~$\rho$ is an homeomorphism from~$(\supp \mu_1,\|\cdot\|_2)$ to~$(\supp
    \mu_2,\|\cdot\|_2)$. It remains to analyze how~$\rho$ transports
    alignments.

First, let us remark that~$\rho^3(T^+ \cap \supp \mu_1^3)$ and~$T^-\cap \supp
    \mu_2^3$ are disjoint. To prove this, we note that the intersection of
    these two sets is open in~$\supp \mu_2^3$ and has $\mu_2^3$-measure~$0$:
    indeed,
    we see that~$\chi$ and~$\chi^{\rho}$ disagree on every element of~$(T^+\cap \supp
    \mu_1^3) \cap {(\rho^{-1})}^3(T^- \cap\supp \mu_2^3)$, and
    therefore
    \[\mu_1^3((T^+ \cap\supp \mu_1^3) \cap {(\rho^{-1})}^3(T^- \cap\supp \mu_2^3)) = 0,\]
    which implies that~$\mu_2^3( \rho^3(T^+ \cap\supp \mu_1^3)
    \cap (T^- \cap\supp \mu_2^3) ) = 0$.  The conclusion follows
    because~$\mu_2^3$ has full support on~$\supp\mu_2^3$. A symmetric argument
    yields that~$\rho^3(T^- \cap \supp \mu_1^3)$ and~$T^+\cap \supp \mu_2^3$
    are also disjoint.

Now, consider an aligned triple~$t= (x,y,z)\in\supp\mu_1^3$ where~$x$ belongs
    to some open ball~$B_1$ contained in~$\supp \mu_1$. From
    Lemma~\ref{lem:local-aligned}, it comes that $t\in\cll (T^- \cap \supp
    \mu_1^3)\cap\cll(T^+ \cap \supp \mu_1^3)$. Thus, $\rho(t) \in \cll
    (\rho^3(T^-) \cap \supp \mu_2^3)\cap\cll(\rho^3(T^+) \cap \supp \mu_2^3)$.
    The disjointedness properties established in the previous paragraph imply
    that
  \[ \rho^3\pth{T^- \cap \supp \mu_{1}^3} \subseteq \R^2 \setminus T^+ =
    \cll \pth{T^-} \quad \hbox{and} \quad \rho^3\pth{T^+ \cap \supp
    \mu_{1}^3} \subseteq \R^2 \setminus T^- = \cll \pth{T^+}, \]
  so~$\rho(t) \in \cll\pth{T^-} \cap \cll\pth{T^+}$. Since~$\rho(x)$ belongs
    to~$\rho(B_1)$, which is an open ball contained in~$\supp \mu_2$,
    Lemma~\ref{lem:local-aligned} ensures that~$\rho(t)$ is aligned.

So~$\rho$ preserves alignments on every
    triple~$(x,y,z)\in\supp\mu_1^3$ with~$x\in B_1$. Recall that~$B_2
  = \rho(B_1)$ is an open, convex set contained in~$\supp\mu_2$. We
  show that~$\rho$ preserves alignments in general by a density
  argument. Let~$Q=abcd$ be a convex quadrilateral contained
  in~$B_1$ and such that $(ab) \cap (cd)$ is a unique

  \smallskip\noindent
  \begin{minipage}{9cm}
    point $e$, contained in~$B_1$, and~$(ad) \cap (bc)$ is a
      unique point~$f$, contained in~$B_1$. We now subdivide~$Q$, as
      depicted on the right, by defining~$x = (ac)\cap(bd)$, then
      introducing the intersections of~$(ex)$ with~$(ad)$ and~$(bc)$,
      and the intersections of~$(fx)$ with~$(ab)$ and~$(cd)$. This
      produces four new convex quadrilaterals tiling~$Q$. Remark that
      for any of these new quadrilaterals, the lines supporting
      opposite sides also intersect in~$e$ and~$f$. We apply this
      subdivision procedure recursively to any new quadrilateral
      produced, and let~$A$ be the (infinite) set of vertices of
      all the quadrilaterals obtained this way. The set~$A$ is dense
      in our initial quadrilateral~$Q$; indeed, the projective
    transformation that maps~$a$ to~$(0,0)$, $b$ to~$(1,0)$, $c$
    to~$(1,1)$ and~$d$ to~$(0,1)$ maps $A$ to the grid
    $\sst{(\frac{i}{2^k},\frac{j}{2^k})}{\text{$k\in\mathbf{N}$ and
        $0\le i,j\le2^k$}}$. (Note that under this transform, $e$
    and~$f$ are mapped to the line at infinity.)
  \end{minipage}
  \hfill
  \begin{minipage}{6cm}
    \begin{center}
      \begin{tikzpicture}[scale=1.3]
        \tikzstyle{node}=[circle,draw,fill=black,scale=0.4]
        \tikzstyle{newnode}=[node,fill=blue,color=blue]
        \begin{scope}[yscale=.4,xscale=.4]
          \node (e) at (-7,-1) [node,label=180:$e$] {};
          \node (f) at (1,-5) [node,label=0:$f$] {};
          \node (a) at (3,3) [node,label=45:$a$] {};
          \node (b) at (-2,1) [node,label=135:$b$] {};
          \node (c) at (-1,-1) [node,label=-100:$c$] {};
          \node (d) at (2,-1) [node,label=-45:$d$] {};
          \node (x) at (0,0) [newnode,label=45:$x$] {};
          \node (x1) at (2.333,.333) [newnode]{};
          \node (x2) at (.2,-1) [newnode]{};
          \node (x3) at (-1.4,-.2) [newnode]{};
          \node (x4) at (-.333,1.666) [newnode]{};
          \begin{scope}[on background layer]
            \draw[thick] (a) -- (b) -- (c) -- (d) -- (a);
            \draw[dashed] (c) -- (e) -- (b);
            \draw[dashed] (c) -- (f) -- (d);
            \draw (x4) -- (x2);
            \draw[dashed] (f) -- (x2);
            \draw[dashed] (x3) -- (e);
            \draw (x3) -- (x1);
          \end{scope}
        \end{scope}
      \end{tikzpicture}
    \end{center}
  \end{minipage}

  Now, observe that~$(\rho(a)\rho(b))$ and~$(\rho(c)\rho(d))$
  intersect in a unique point,
  namely~$\rho(e)\in\supp\mu_2$
  (note that~$\rho(\supp\mu_1)$
  cannot be contained in a line because~$\rho$ is measure preserving and~$\mu_2$
  does not charge lines). Similarly,~$(\rho(a)\rho(d))$
  and~$(\rho(b)\rho(c))$ intersect in a unique point, which
  is~$\rho(f)\in\supp\mu_2$.  We perform a similar recursive
  construction in~$B_2$, starting from the
  quadrilateral~$\rho(a)\rho(b)\rho(c)\rho(d)$, and let~$A'$ be
  the set of vertices obtained.

  Let~$p$ be the unique projective transformation that maps~$a$
    to~$\rho(a)$, $b$ to~$\rho(b)$, $c$ to~$\rho(c)$ and~$d$
    to~$\rho(d)$. In particular,~$p$ preserves the colinearity of
  points in~$\mathbf{R}^2$.  This implies that if $x\in A$, then
  $\rho(x)=p(x)$ as all points in~$A$ and in~$A'$ are defined from~$Q$
  and from~$(\rho(a),\rho(b),\rho(c),\rho(d))=(p(a),p(b),p(c),p(d))$
  using only colinearity properties.  Hence $p_{|Q}=\rho_{|Q}$ by
  continuity.

  It remains to show that $p(y)=\rho(y)$ for every~$y\in\supp\mu_1$.
  Let~$y\in\supp\mu_1$ and consider two lines~$L_1$ and~$L_2$ that
  both intersect the interior of~$Q$ and such that $L_1\cap
  L_2=\{y\}$. For~$i\in\{1,2\}$, let~$x_i$ and~$z_i$ be two distinct
  points in~$L_i\cap Q$; in particular $p(x_i)=\rho(x_i)$
  and~$p(z_i)=\rho(z_i)$. By definition,~$p$ preserves colinearity and
  since $x_1, x_2 \in B_1$, both triples~$\rho^3(x_i,y,z_i)$ are
  also aligned. It follows that
\[ \rho(y) = (\rho(x_1)\rho(z_1)) \cap (\rho(x_2)\rho(z_2)) = (p(x_1)p(z_1)) \cap (p(x_2)p(z_2)) = p(y),\]
which completes the proof.
\end{proof}

\subsubsection{Rigidity for kernels (proofs)}\label{sec:proof-kernels}

We now prove the rigidity results stated in
Section~\ref{s:overviewrigid}.

\paragraph{Step functions.}

We first argue that any kernel can be approximated by a step function.

\begin{lemma}\label{lem:stepfunction}
Let~$(J,\mu,W)$ be a kernel. For every
positive real number~$\varepsilon$, there exists a step function~$V$ on~$(J,\mu)$ such
that~$\norm{W-V}_1 \leq \varepsilon$.
\end{lemma}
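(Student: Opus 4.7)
\medskip

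The plan is to approximate $W$ in $L^1$ norm by a step function using the standard three-step process from measure theory: reduce to indicator functions, approximate each measurable set in $J^3$ by finite unions of measurable rectangles $A\times B\times C$, and then refine all rectangle sides into a single partition of $J$.

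First, I would reduce to the case of indicator functions. Set $M=\norm{W}_\infty<\infty$, and for a large integer~$N$ (to be chosen in terms of $\varepsilon$), partition $[-M,M]$ into $N$ intervals $I_1,\dotsc,I_N$ of length~$2M/N$. Picking a representative $c_k\in I_k$ and setting $E_k=W^{-1}(I_k)\subseteq J^3$, the simple function $W_0=\sum_{k=1}^N c_k\mathbf{1}_{E_k}$ satisfies $\norm{W-W_0}_1\le 2M/N$, which is at most $\varepsilon/2$ for $N$ large enough. It therefore suffices to approximate each $\mathbf{1}_{E_k}$ in $L^1$ by a step function with error at most $\varepsilon/(2NM)$, and then take a linear combination.

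Next, for a fixed measurable set $E\subseteq J^3$, I would use that the product $\sigma$-algebra on~$J^3$ is generated by the algebra of finite disjoint unions of measurable rectangles $A\times B\times C$. By the Carath\'eodory extension theorem (or directly by approximation of $(\mu\otimes\mu\otimes\mu)$-measurable sets from the outside), for any $\eta>0$ there exists a finite disjoint union $F=\bigsqcup_{\ell=1}^L A_\ell\times B_\ell\times C_\ell$ of measurable rectangles such that $(\mu\otimes\mu\otimes\mu)(E\triangle F)\le\eta$, which gives $\norm{\mathbf{1}_E-\mathbf{1}_F}_1\le\eta$. Applying this to each $E_k$ with $\eta=\varepsilon/(4NM)$, I obtain a global approximation $W_1=\sum_{k=1}^N c_k\mathbf{1}_{F_k}$ with $\norm{W_0-W_1}_1\le\varepsilon/4$; crucially, $W_1$ is a finite linear combination of indicators of rectangles.

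Finally, I would collect all the sets $A_\ell$, $B_\ell$, $C_\ell$ appearing in the rectangles defining $W_1$ into a single finite family $\mathcal{F}$ of measurable subsets of~$J$, and let $(P_1,\dotsc,P_m)$ be the common refinement of $\mathcal{F}$, i.e.\ the atoms of the finite $\sigma$-algebra generated by~$\mathcal{F}$. Each $A_\ell$, $B_\ell$, $C_\ell$ is then a disjoint union of some of the $P_i$, hence every rectangle $A_\ell\times B_\ell\times C_\ell$ is a disjoint union of products $P_i\times P_j\times P_k$, and $W_1$ is constant on each such product. Therefore $W_1$ is a step function on $(J,\mu)$ with partition $(P_1,\dotsc,P_m)$, and the triangle inequality gives $\norm{W-W_1}_1\le\varepsilon/2+\varepsilon/4<\varepsilon$. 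Setting $V=W_1$ completes the proof.

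The only non-routine step is the rectangle approximation in the second paragraph; everything else is bookkeeping. That step is a standard consequence of the construction of the product measure, but it is the substantive measure-theoretic input, and in the statement it implicitly relies on $\mu\otimes\mu\otimes\mu$ being the completion of the product of $\mu$ with itself three times so that $W$ is measurable with respect to this product $\sigma$-algebra (which is built into the definition of a kernel given on page~\pageref{page-step}).
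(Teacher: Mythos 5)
Your proof is correct and follows essentially the same three-step route as the paper's: a uniform approximation of $W$ by a simple function, an approximation of each level set by finite unions of measurable boxes $A\times B\times C$ (the paper packages this as \cref{lem:basic} in the appendix, you invoke Carath\'eodory/outer regularity directly), and the observation that the resulting function is a step function. The only presentational difference is that you spell out the common-refinement step at the end, whereas the paper compresses it into the remark that a linear combination of step functions is again a step function.
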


\begin{proof}
It is well known in measure theory that for every function~$W$ and every
  positive~$\varepsilon$, there exists a function~$V\colon J^3\to
    \mathbf{R}$ that is measurable, has finite image (\emph{i.e.},~$V(J^3)$ is
  finite) and satisfies $\norm{W-V}_\infty\leq\varepsilon/2$. Let us
  write $V=\sum_{i=1}^ka_i\mathbf{1}_{A_i}$, where~$a_i$ is a nonzero real
  number and $\mathbf{1}_{A_i}$ the indicator function of a measurable
  set~$A_i$ for each~$i\in[k]$.

  Let~$\mathcal{B}$ be the set of boxes of the form~$P_1\times
  P_2\times P_3$, where~$P_1$, $P_2$ and~$P_3$ are measurable subsets
  of~$J$ and let~$X$ be the set of finite unions of elements
  of~$\mathcal{B}$. For every~$i\in[k]$, there is a set~$B_i \in X$
  such that $\mu(A_i\triangle B_i)\leq \frac{\varepsilon}{2 k
    \abs{a_i}}$. (A proof of this fact is presented in Appendix, \cref{lem:basic}.) The
  function~$U=\sum_{i=1}^ka_i\mathbf{1}_{B_i}$ is a step function as a
  linear combination of step functions.  Moreover,
  \[\norm{U-V}_1\leq \sum_{i=1}^k\abs{a_i}\cdot\norm{\mathbf{1}_{A_i}-\mathbf{1}_{B_i}}_1
  = \sum_{i=1}^k\abs{a_i}\cdot \mu(A_i \triangle B_i)\leq \frac{\varepsilon}{2}.\]
  Consequently, $\norm{W-U}_1\leq\norm{W-V}_1+\norm{V-U}_1\leq\varepsilon$,
  which finishes the proof.
\end{proof}

\paragraph{Induced step functions.}

Let~$\mu_n^u$ be the counting measure on~$[n]$. As reported earlier (in Subsection~\ref{s:overviewrigid}),
we know that~$([n],\mu_n^u,\mathbf{H}(W,S))$ is a step function for the partition
of~$[n]$ into singletons. We now bound the distance between this step
function and~$(J,\mu,W)$. Thanks to \cref{lem:stepfunction},
it is sufficient to deal with the case where~$(J,\mu,W)$ is also a step function.
We use the following (standard) terminology.
An \emph{atom} of a measure~$\mu$ is a $\mu$-measurable set~$A$
with $\mu(A)>0$ and such that every $\mu$-measurable subset of~$A$
has measure either~$\mu(A)$ or~$0$.
A measure is \emph{atom-free} if it admits no atom.

\begin{lemma}\label{lem:dWHWS}
Let~$(J,\mu,W)$ be a step function and let~$(P_1,\dotsc,P_m)$ be a partition
    of~$J$ such that~$W$ is constant on~$P_i\times P_j\times P_k$ for
    any~$(i,j,k)\in{[m]}^3$. For every integer~$n$ and every
    tuple~$S=(x_1,\dotsc,x_n) \in J^n$,
    \[ \dist(W,\mathbf{H}(W,S)) \leq 6\sum_{i=1}^m\abs{\mu(P_i)-\frac{\abs{\{j \in
    [n] \colon x_j \in P_i\}}}{n}}\cdot \norm{W}_\infty.  \]
\end{lemma}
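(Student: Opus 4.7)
The plan is to construct an explicit coupling between $(J,\mu)$ and $([n],\mu_n^u)$ and to estimate the resulting $L_1$-distance directly. By the remark following the definition of~$d$, the infimum defining $\dist(W,\mathbf{H}(W,S))$ may be restricted to coupling measures $\nu$ on $J\times[n]$ with the projections $\phi_1,\phi_2$ as measure-preserving maps, so it suffices to exhibit one such~$\nu$ whose $L_1$-cost is small.

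Write $p_i=\mu(P_i)$ and $q_i=|\{j\in[n]\colon x_j\in P_i\}|/n$, so that $(p_i)$ and $(q_i)$ are two probability distributions on~$[m]$. I would take $\nu$ to be a \emph{maximal coupling}: for each $i$ with $\min(p_i,q_i)>0$, place on the block $P_i\times\{j\colon x_j\in P_i\}$ a scalar multiple of the product of $\mu|_{P_i}$ with the uniform counting measure on $\{j\colon x_j\in P_i\}$, normalized to have total mass $\min(p_i,q_i)$; then distribute the remaining $1-\sum_i\min(p_i,q_i)=\frac12\sum_i|p_i-q_i|$ of mass on off-diagonal blocks so as to restore the marginals~$\mu$ and~$\mu_n^u$. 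The mismatch set
\[
  D = \{(x,j)\in J\times[n]\colon x\in P_i,\ x_j\in P_{i'},\ i\neq i'\}
\]
then satisfies $\nu(D)=\frac12\sum_i|p_i-q_i|$.

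With this coupling in place, the key observation is that because $W$ is constant on each product $P_i\times P_j\times P_k$, the integrand
\[
  |W(\phi_1(u_1),\phi_1(u_2),\phi_1(u_3))-\mathbf{H}(W,S)(\phi_2(u_1),\phi_2(u_2),\phi_2(u_3))|
\]
vanishes on every triple $(u_1,u_2,u_3)\in(J\times[n])^3$ for which no $u_r$ lies in $D$, and is otherwise bounded by $2\|W\|_\infty$. A union bound over the three coordinates yields $\nu^3\pth{\{(u_1,u_2,u_3)\colon u_r\in D \text{ for some }r\}}\leq3\nu(D)$, so
\[
  \dist(W,\mathbf{H}(W,S))\leq\norm{W^{\phi_1}-\mathbf{H}(W,S)^{\phi_2}}_1\leq 2\|W\|_\infty\cdot 3\nu(D)\leq 6\|W\|_\infty\sum_i|p_i-q_i|,
\]
which is the announced inequality.

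The only technical point to handle carefully is the construction of $\nu$ with marginals exactly~$\mu$ and~$\mu_n^u$: one must split each mass $\min(p_i,q_i)$ across the correct diagonal block and fill in the $\frac12\sum_i|p_i-q_i|$ of off-diagonal mass so as to correct both marginals, which is a standard finite transportation argument done block by block. Once $\nu$ is built, the rest of the proof is the one-line union bound above, so no essential obstacle is anticipated.
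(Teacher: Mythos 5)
Your proof is correct and takes a genuinely different route from the paper's. The paper first passes to an auxiliary \emph{atom-free} probability space $(J_0,\mu_0)$, invokes Sikorski's theorem (\cref{lem-subsetofmeasurex}) to partition a residual set into pieces of prescribed measure, and only then builds both measure-preserving maps $\phi_1$ and $\phi_2$ explicitly; you instead construct a maximal coupling of $\mu$ and $\mu_n^u$ directly on $J\times[n]$, placing mass $\min(p_i,q_i)$ on each diagonal block $P_i\times Q_i$ and coupling the residuals (which have equal total mass $\frac12\sum_i|p_i-q_i|$) on the off-diagonal blocks, say by a normalized product. The two proofs share the decisive observation---that the integrand vanishes on all triples whose three coordinates avoid the mismatch set, together with the union bound over the three coordinates yielding the factor~$3$---so the work is really in building the measure-preserving pair of maps, and your coupling avoids both the atom-free reduction and Sikorski's theorem, which is a genuine simplification. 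As a side effect, you compute $\nu(D)=\frac12\sum_i|p_i-q_i|$ exactly and thereby obtain the sharper constant $3$ in place of $6$; the paper's derivation is wasteful by the same factor $2$ when it bounds $\sum_i(\mu_n^u(Q_i)-a_i)$, which equals $\frac12\sum_i|\mu(P_i)-\mu_n^u(Q_i)|$, simply by $\sum_i|\mu(P_i)-\mu_n^u(Q_i)|$. Both bounds establish the stated lemma.
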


\begin{proof}
  We first eliminate any atom the measure~$\mu$ may have by
  defining\footnote{If~$\mu$ is atom-free then we may take
    $(J_0,\mu_0)=(J,\mu)$ and~$\phi_1$ equal to the identity;
    otherwise, we set~$J_0=J\times[0,1]$, we let~$\mu_0$ be the
    product measure of~$\mu$ and the Lebesgue measure on~$[0,1]$, and
    we let~$\phi_1$ be the projection of~$J_0$ on its first coordinate.}
  an atom-free measurable space~$(J_0,\mu_0)$ and a measure-preserving
  map $\phi_1\colon J_0 \to J$. Now, let us write
    \[ Q_i = \{j \in [n] \colon x_j \in P_i\} \qquad \hbox{and} \qquad n_i = \abs{Q_i}.\]
  For~$i\in[m]$, we set~$a_i=\min\{\mu(P_i),\mu_n^u(Q_i)\}$.  Since
  $\mu_0(\phi^{-1}_1(P_i))=\mu(P_i)\geq a_i$, there is a subset~$R_i
    \subseteq \phi^{-1}_1(P_i)$ such that~$\mu_{0}(R_i)=a_i$.  The
  set~$J_0'=J_0 \setminus\bigcup_{i=1}^mR_i$ has
  measure~$1-\sum_{i=1}^ma_i=\sum_{i=1}^m(\mu_n^u(Q_i)-a_i)$. 
  A now classical theorem of measure theory, due to Sikorski~\cite{Sik58}\footnote{This result is often, and apparently wrongly, attributed to Sierpi\'nski.}, assures that we can
  partition~$J_0'$ into~$m$ measurable subsets~$T_1,\dotsc,T_m$ such
  that~$\mu_0(T_i)=\mu_n^u(Q_i)-a_i$ for every~$i\in[m]$. (A proof
  that such a partition exists is presented in Appendix, \cref{lem-subsetofmeasurex}.)

  We now construct a measure-preserving function~$\phi_2$ such that
  $\phi_2(R_i\cup T_i)=Q_i$ for each~$i\in[m]$. Recall
    that~$(Q_1,\dotsc,Q_m)$ is a partition of~$[n]$.  We know
  that~$\mu_0(R_i\cup T_i)=\mu_n^u(Q_i)=n_i/n$ for each~$i\in[m]$, so
    there is a partition~${(B_j^i)}_{j\in Q_i}$ of~$R_i\cup T_i$
  into~$n_i$ parts each of $\mu_0$-measure~$1/n$, which are indexed by
  the elements of~$Q_i$.  If $j\in Q_i$, then we set $\phi_2(y)=j$ for
  every~$y\in B_j^i$.  Doing this for each~$i \in[m]$ naturally
  defines a function~$\phi_2$ from~$J_0$ to~$[n]$.  (Indeed, for
  every~$x\in J_0$ there is a unique pair~$(i,j)\in[m]\times[n]$ such
  that $x\in B_j^i$, and we set~$\phi_2(x)=j$.)  For convenience, we
  note that for each~$j\in[n]$, there is exactly one index~$i\in[m]$
    such that $B_j^i$ is defined: we call~$i(j)$ this index.

  The function~$\phi_2$ satisfies that $\phi_2(R_i\cup T_i)=Q_i$
  for~$i\in[m]$.  Moreover, it is measure preserving since
    $\mu_0(\phi_2^{-1}(A))=\sum_{j\in A}\mu_0\left(B_j^{i(j)}\right)=\abs{A}/n=\mu_n^u(A)$
    for every~$A\subseteq[n]$.  To deduce our assertion, it suffices to
  prove that
  \[
      \norm{W^{\phi_1}-{\mathbf{H}(W,S)}^{\phi_2}}_1
  \leq
    6\sum_{i=1}^m\abs{\mu(P_i)-\mu_n^u(Q_i)}\cdot \norm{W}_\infty.
  \]
  If~$a$ belongs to~$R_i$ for some~$i\in[m]$, then by construction
  $\phi_1(a)\in P_i$ and moreover $\phi_2(a)\in Q_i$, that
    is,~$x_{\phi_2(a)}\in P_i$.  Consequently, for every~$(i,j,k)\in{[m]}^3$ the functions~$W^{\phi_1}$
    and~${\mathbf{H}(W,S)}^{\phi_2}$ are equal (and constant)
    on~$R_i\times R_j\times R_k$.

    It follows that~$W^{\phi_1}$ and~${\mathbf{H}(W,S)}^{\phi_2}$ are
  equal everywhere except on the set~$X=(J_0'\times J_0\times J_0)\cup(J_0\times
  J_0'\times J_0) \cup(J_0\times J_0\times J_0')$, which has
  $\mu_0^3$-measure at most
  $3\mu_0(J_0')=3\sum_{i=1}^m(\mu_n^u(Q_i)-a_i) \leq
  3\sum_{i=1}^m\abs{\mu(P_i)-\mu_n^u(Q_i)}$.
Consequently,
    \begin{align*}
  \norm{W^{\phi_1}-{\mathbf{H}(W,S)}^{\phi_2}}_1
        &\leq \int_X |W^{\phi_1}-{\mathbf{H}(W,S)}^{\phi_2}|\diff\mu_0^3\\
        &\leq \int_X|W^{\phi_1}|\diff\mu_0^3 + \int_X|{\mathbf{H}(W,S)}^{\phi_2}|\diff\mu_0^3\\
        &\leq 2\times\norm{W}_{\infty}\times\mu_0^3(X)\\
        &\leq 6\norm{W}_{\infty}\sum_{i=1}^m\abs{\mu(P_i)-\mu_n^u(Q_i)}.
    \end{align*}
    
    The statement follows.
\end{proof}

\paragraph{Random step functions.}

 To control how well~$\mathbf{H}(W,n)$ approximates~$W$, we use the
 following lemma.

\begin{lemma}\label{lem:sampling:norm}
  For every kernel~$(J,\mu,W)$ and every integer~$n$,
  \[
  \abs{\Ex\left[\norm{\mathbf{H}(W,n)}_1\right] - \norm{W}_1} \leq \frac{3}{n}\norm{W}_{\infty}.
  \] 
\end{lemma}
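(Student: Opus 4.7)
The plan is to expand the expectation and compare term-by-term with $\norm{W}_1$. Writing $S=(x_1,\dotsc,x_n)$ for a $\mu^n$-random tuple, I would use the definition
\[
\norm{\mathbf{H}(W,S)}_1 = \frac{1}{n^3}\sum_{(i,j,k)\in[n]^3}\abs{W(x_i,x_j,x_k)}
\]
(remember the underlying measure on $[n]$ is the counting measure $\mu_n^u$, hence each triple of indices contributes weight $1/n^3$), so by linearity
\[
\Ex\left[\norm{\mathbf{H}(W,n)}_1\right] = \frac{1}{n^3}\sum_{(i,j,k)\in[n]^3}\Ex\left[\abs{W(x_i,x_j,x_k)}\right].
\]

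Next I would split the sum according to whether the indices are pairwise distinct. Let $I=\{(i,j,k)\in[n]^3 : |\{i,j,k\}|=3\}$, so $|I|=n(n-1)(n-2)$ and $|[n]^3\setminus I|=n^3-n(n-1)(n-2)=3n^2-2n$. When $(i,j,k)\in I$ the random variables $x_i,x_j,x_k$ are independent with law $\mu$, hence
\[
\Ex\left[\abs{W(x_i,x_j,x_k)}\right] = \int_{J^3}\abs{W}\diff\mu^3 = \norm{W}_1.
\]
For the remaining triples, each term lies in $[0,\norm{W}_\infty]$ and the same holds for $\norm{W}_1$ (as $\mu$ is a probability measure).

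Combining, and using that $\frac{1}{n^3}\sum_{(i,j,k)}\norm{W}_1 = \norm{W}_1$, I would write
\[
\Ex\left[\norm{\mathbf{H}(W,n)}_1\right] - \norm{W}_1
= \frac{1}{n^3}\sum_{(i,j,k)\notin I}\Bigl(\Ex\left[\abs{W(x_i,x_j,x_k)}\right]-\norm{W}_1\Bigr),
\]
since the contribution from $I$ vanishes. Each summand on the right has absolute value at most $\norm{W}_\infty$, so the triangle inequality yields
\[
\abs{\Ex\left[\norm{\mathbf{H}(W,n)}_1\right]-\norm{W}_1}
\le \frac{3n^2-2n}{n^3}\norm{W}_\infty \le \frac{3}{n}\norm{W}_\infty,
\]
which is the desired inequality.

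There is essentially no obstacle here: the entire content of the lemma is the observation that the only bias in the sampled norm comes from the $O(n^2)$ index triples with a repetition, and these contribute $O(1/n)\cdot\norm{W}_\infty$ to the error. The bookkeeping with $|I|=n(n-1)(n-2)$ tightly matches the constant $3$ in the bound.
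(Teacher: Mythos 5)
Your proof is correct and follows essentially the same route as the paper: expand the expectation as a sum over index triples, observe that triples with pairwise distinct indices contribute exactly $\norm{W}_1$, and bound the $O(n^2)$ remaining triples by $\norm{W}_\infty$. The only difference is cosmetic — you difference directly against $\norm{W}_1$ using the exact count $n^3-n(n-1)(n-2)=3n^2-2n$, while the paper derives a two-sided inequality from the bound $\geq n^3-3n^2$ on the number of good triples; both yield $3/n$.
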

\begin{proof}
  Let~$S=(x_1,\dotsc,x_n)$ be a $\mu^n$-random tuple of~$J^n$.
  The expected value~$E$ of~$\norm{\mathbf{H}(W,S)}_1$ is equal to
  \[
  \int_{J^n}\frac{1}{n^3}\sum_{1\leq i,j,k\leq n}\abs{W(x_i,x_j,x_k)}\diff\mu^n(S)
  = \frac{1}{n^3}\sum_{1\leq i,j,k\leq n} I_{i,j,k},
  \]
  where~$I_{i,j,k}$ is the integral~$\int_{J^n}\abs{W(x_i,x_j,x_k)}\diff\mu^n(S)$.
    First note that $0 \leq I_{i,j,k} \leq \norm{W}_\infty$ for every~$(i,j,k)\in{[n]}^3$.
  If moreover~$i$, $j$ and~$k$ are pairwise different, then~$I_{i,j,k}=\norm{W}_1$.
    The number of triples of~${[n]}^3$ with pairwise different elements is greater than~$n^3-3n^2$.
  It follows that
  $(n^3-3n^2)\norm{W}_1\leq
  n^3E\leq (n^3-3n^2)\norm{W}_1+3n^2\norm{W}_\infty$,
  and further $-\frac{3}{n}\norm{W}_1\leq E-\norm{W}_1 \leq \frac{3}{n}\norm{W}_\infty$,
  which yields the conclusion since~$\norm{W}_1\leq\norm{W}_\infty$.
\end{proof}

We can now prove that random step functions are good approximations
relatively to our distance.

\begin{lemma}\label{lem:sampling}
  Let~$(J,\mu,W)$ be a kernel.  For every positive integer~$n$, we
  consider the random kernel~$([n],\mu_n^u,\mathbf{H}(W,n))$.  The
  random sequence~${(\dist(W,\mathbf{H}(W,n)))}_n$ almost surely tends
  to~$0$ as~$n$ goes to infinity.
\end{lemma}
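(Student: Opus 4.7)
The plan is to combine the three preceding lemmas with a triangle inequality argument. Fix a kernel $(J,\mu,W)$ and $\varepsilon>0$. By \cref{lem:stepfunction}, pick a step function $V$ on $(J,\mu)$, with defining partition $(P_1,\dotsc,P_m)$, such that $\|W-V\|_1\le\varepsilon$. For a random sample $S=(x_1,\dotsc,x_n)\in J^n$, we decompose
\[
\dist(W,\mathbf{H}(W,S))\le \dist(W,V)+\dist(V,\mathbf{H}(V,S))+\dist(\mathbf{H}(V,S),\mathbf{H}(W,S)),
\]
and bound each term separately, coupling the two sampled kernels in the last term by using the same tuple $S$ on the common probability space $([n],\mu_n^u)$.

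The first term is at most $\|W-V\|_1\le\varepsilon$, directly from the definition of $\dist$ applied with the identity maps. For the second term, \cref{lem:dWHWS} gives
\[
\dist(V,\mathbf{H}(V,S))\le 6\|V\|_\infty\sum_{i=1}^m\bigl|\mu(P_i)-n_i/n\bigr|,
\]
where $n_i=|\{j:x_j\in P_i\}|$. Since $n_1/n,\dotsc,n_m/n$ are empirical frequencies of i.i.d.\ samples from $\mu$, the strong law of large numbers yields $n_i/n\to\mu(P_i)$ almost surely for every $i\in[m]$, so this term tends to $0$ almost surely. For the third term, since both kernels share the same underlying space $([n],\mu_n^u)$, the identity is a measure-preserving coupling and
\[
\dist(\mathbf{H}(V,S),\mathbf{H}(W,S))\le\|\mathbf{H}(W-V,S)\|_1=\frac{1}{n^3}\sum_{i,j,k\in[n]}|W-V|(x_i,x_j,x_k).
\]

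The crux is therefore to show that $\|\mathbf{H}(W-V,n)\|_1$ is, almost surely for large $n$, close to $\|W-V\|_1\le\varepsilon$. Its expectation is within $3\|W-V\|_\infty/n$ of $\|W-V\|_1$ by \cref{lem:sampling:norm}. To upgrade this expectation bound to an almost sure statement we invoke McDiarmid's bounded differences inequality: replacing a single $x_i$ in $S$ alters at most $3n^2$ terms of the sum, each bounded by $\|W-V\|_\infty$, so the average changes by at most $3\|W-V\|_\infty/n$. Thus
\[
\Prob\bigl(\bigl|\|\mathbf{H}(W-V,n)\|_1-\Ex\|\mathbf{H}(W-V,n)\|_1\bigr|>\varepsilon\bigr)\le 2\exp\!\left(-\tfrac{2n\varepsilon^2}{9\|W-V\|_\infty^2}\right),
\]
which is summable in $n$. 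The Borel--Cantelli lemma gives $\limsup_n\|\mathbf{H}(W-V,n)\|_1\le 2\varepsilon$ almost surely.

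Putting the pieces together, on a full-measure event $\limsup_n\dist(W,\mathbf{H}(W,n))\le 3\varepsilon$. Intersecting the full-measure events obtained for $\varepsilon=1/k$, $k\ge 1$, we conclude that $\dist(W,\mathbf{H}(W,n))\to 0$ almost surely. The main obstacle is precisely this last upgrade from $L^1$-convergence of expectations (\cref{lem:sampling:norm}) to almost sure convergence; the bounded-difference structure of the $U$-statistic $\|\mathbf{H}(W-V,n)\|_1$ makes McDiarmid's inequality applicable and supplies the needed concentration.
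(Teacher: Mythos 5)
Your proof follows the same overall strategy as the paper: approximate $W$ by a step function $V$ using \cref{lem:stepfunction}, control the step-function term via \cref{lem:dWHWS}, and assemble the pieces with a triangle inequality, coupling $\mathbf{H}(V,n)$ and $\mathbf{H}(W,n)$ on a common sample. Two points differ. For the step-function term you invoke the strong law of large numbers where the paper uses Hoeffding's inequality plus a union bound; both give almost-sure convergence, so this is cosmetic. The substantive difference is the coupling error $\dist(\mathbf{H}(V,n),\mathbf{H}(W,n))\le\norm{\mathbf{H}(W-V,n)}_1$: the paper only bounds its expectation (via \cref{lem:sampling:norm}) and concludes by estimating $\Ex(\dist(W,\mathbf{H}(W,n)))\le 3\varepsilon$, which as written yields $L^1$ (hence in-probability) convergence but does not by itself deliver the asserted almost-sure convergence. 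You close that gap by treating $\norm{\mathbf{H}(W-V,n)}_1$ as a bounded-differences statistic in the $n$ independent samples, observing the per-coordinate variation is at most $3\norm{W-V}_\infty/n$, applying McDiarmid's inequality to get exponential concentration around the mean, and invoking Borel--Cantelli to obtain $\limsup_n\norm{\mathbf{H}(W-V,n)}_1\le 2\varepsilon$ almost surely; intersecting over $\varepsilon=1/k$ finishes. Your argument is correct and, at this concentration step, more complete than the one printed in the paper.
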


\begin{proof}
  Let us first assume that~$W$ is a step function. We fix a
  partition~$(P_1,\dotsc,P_m)$ of~$J$ such that~$W$ is constant
  on~$P_i\times P_j\times P_k$ for any~$(i,j,k)\in{[m]}^3$ and apply
  \cref{lem:dWHWS} to obtain, for any $n$-tuple~$S$, the upper bound:
  \begin{equation}\label{eq:dWHWS}
      \dist(W,\mathbf{H}(W,S)) \leq 6\sum_{i=1}^m\abs{\mu(P_i)-\frac{\abs{\{j \in [n] \colon x_j \in P_i\}}}{n}}\cdot
  \norm{W}_\infty.
  \end{equation}
  Let us now consider a $\mu^n$-random tuple~$S$ and set~$Q_i =
    \{j \in [n] \colon x_j \in P_i\}$ and $n_i = \abs{Q_i}$. For
  each~$i\in[m]$, the parameter~$n_i$ follows a binomial law of
  parameter~$\mu(P_i)$.  Fix~$\varepsilon >0$. By Hoeffding's
  inequality, the probability that $\abs{\mu(P_i) - \frac{n_i}{n}}
  \geq \varepsilon$ is at most~$2e^{-2\varepsilon^2n}$.  Thus the
  union bound yields that $\dist(W,\mathbf{H}(W,S))
    \leq6m\varepsilon\norm{W}_\infty$ with probability at
  least~$1-2me^{-2\varepsilon^2n}$.  Hence,~$\dist(W,\mathbf{H}(W,n))$
  goes almost surely to~$0$ when~$n$ goes to infinity.

  Let us now consider the general case. By
  Lemma~\ref{lem:stepfunction}, there is a step function~$V$
  on~$(J,\mu)$ such that $\norm{W-V}_1\leq \varepsilon$.  For every
  positive integer~$n$, one can couple the random
  variables~$\mathbf{H}(V,n)$ and~$\mathbf{H}(W,n)$ such that
  \begin{equation}
    \Ex(\norm{\mathbf{H}(V,n)-\mathbf{H}(W,n)}_1)
    \leq\frac{3}{n}\norm{V-W}_\infty.
    \label{eq:correlate}
  \end{equation}
  Indeed, let~$S=(x_1,\dotsc,x_n)$ be a single $\mu^n$-random tuple
  of~$J^n$, and note that $\mathbf{H}(V,S)-\mathbf{H}(W,S) =
  \mathbf{H}(V-W,S)$.  Lemma~\ref{lem:sampling:norm} applied to~$V-W$
  yields Equation~\eqref{eq:correlate}. Since~$V$ is a step function,
  we know from the first part of the proof that when~$n$ goes to
    infinity, the random sequence~${(\dist(V,\mathbf{H}(V,n)))}_n$ 
  almost surely goes to~$0$. Let~$N$ be large enough to ensure that
    whenever~$n\geq N$, both $\Ex(\dist(V,\mathbf{H}(V,n)))\leq \varepsilon$ and
  $\frac{3}{n}\norm{V-W}_\infty\leq\varepsilon$ hold.
  It follows that for every~$n\geq N$,
  \begin{align*}
    \Ex(\dist(W,\mathbf{H}(W,n)))
    & \leq
    \Ex(\dist(W,V)+\dist(V,\mathbf{H}(V,n))
    +\dist(\mathbf{H}(V,n),\mathbf{H}(W,n)))\\
    & \leq
    \dist(W,V)+\Ex(\dist(V,\mathbf{H}(V,n)))
    +\Ex(\|\mathbf{H}(V,n)-\mathbf{H}(W,n)\|_1)\\
    & \leq
    \norm{W-V}_1 + \Ex(\dist(V,\mathbf{H}(V,n)))+\frac{3}{n}\norm{V-W}_\infty\\
    & \leq
    3\varepsilon.  
  \end{align*}
  The statement follows.
\end{proof}

\paragraph{Proof of Theorem~\ref{thm:dens-dist}.}

We can now prove the first rigidity result that we stated:
if~$(J_1,\mu_1,W_1)$ and~$(J_2,\mu_2,W_2)$ are two kernels such that
$t(H,W_1)=t(H,W_2)$ for every function~$H\in\mathcal{H}$, then $\dist(W_1,W_2)=0$.

\begin{proof}[Proof of Theorem~\ref{thm:dens-dist}]
  The assumption implies that for every positive integer~$n$, the two
  random variables~$\mathbf{H}(W_1,n)$ and~$\mathbf{H}(W_2,n)$ have
  the same distribution.  Consequently, if~$H_n$ is a random function
  equivalent to both~$\mathbf{H}(W_1,n)$ and~$\mathbf{H}(W_2,n)$, then
  by \cref{lem:sampling} both~$\dist(W_1,H_n)$
  and~$\dist(W_2,H_n)$ almost surely tend to~$0$.  The triangular
  inequality $\dist(W_1,W_2) \leq\dist(W_1,H_n)+\dist(H_n,W_2)$ thus
  ensures that~$\dist(W_1,W_2)$ equals~$0$.
\end{proof}

\paragraph{Kernel isomorphisms}

We next show that there is an isomorphism between (almost all) the
sets associated to these kernels that preserves (almost everywhere)
the characteristic of the kernels, that is, their measures and their
functions (\cref{prop:big}). We start with an analogue of a classical
result on kernels, which provides a weaker conclusion.

\begin{lemma}\label{lem:phi}
  If~$(J_1,\mu_1,W_1)$ and~$(J_2,\mu_2,W_2)$ are two kernels such that
  $\dist(W_1,W_2)=0$, then there exist a probability space~$(J,\mu)$
  and for each~$i\in\{1,2\}$ a measure-preserving map~$\phi_i\colon
  J\to J_i$ such that
  \begin{itemize}
  \item for every~$i\in\{1,2\}$ and every $\mu$-measurable set~$A\subseteq J$,
    the set $\phi_i(A)$ is~$\mu_i$-measurable; and
  \item $W_2^{\phi_2}(x,y,z)=W_1^{\phi_1}(x,y,z)$ for $\mu^3$-almost
    every~$(x,y,z)\in J^3$.
  \end{itemize}
\end{lemma}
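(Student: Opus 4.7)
The plan is to realize the desired coupling as the weak limit of a sequence of near-optimal couplings. First, by the theorem of Janson cited just before the definition of~$\dist$, I may assume that $J_1=J_2=[0,1]$ equipped with the Lebesgue measure~$\lambda$. Any coupling $(J,\mu,\phi_1,\phi_2)$ then pushes forward to a Borel probability measure $(\phi_1,\phi_2)_{*}\mu$ on~${[0,1]}^2$ with Lebesgue marginals, and $\norm{W_1^{\phi_1}-W_2^{\phi_2}}_1$ depends only on this push-forward. So the hypothesis $\dist(W_1,W_2)=0$ will produce a sequence~${(\mu^{(n)})}_{n\in\N}$ of Borel probability measures on~${[0,1]}^2$ with Lebesgue marginals such that $\int_{[0,1]^6}|W_1(x_1,x_2,x_3)-W_2(y_1,y_2,y_3)|\diff(\mu^{(n)})^{\otimes 3}\to 0$, where a point of~${[0,1]}^6$ is written as a triple $((x_i,y_i))_{i=1,2,3}$ of points of~${[0,1]}^2$.

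Next, since ${[0,1]}^2$ is compact, Prokhorov's theorem will let me extract a subsequence (still denoted~${(\mu^{(n)})}_n$) converging weakly to a Borel probability measure~$\mu$. Testing weak convergence against continuous test functions depending on a single coordinate will show that the marginals of~$\mu$ remain Lebesgue, so the coordinate projections $\phi_1,\phi_2\colon{[0,1]}^2\to[0,1]$ are measure-preserving. The candidate coupling will then be $J={[0,1]}^2$ equipped with~$\mu$ and these projections.

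The main obstacle will be to show that $W_1^{\phi_1}=W_2^{\phi_2}$ holds $\mu^{\otimes 3}$-almost everywhere, because weak convergence of $(\mu^{(n)})^{\otimes 3}$ to~$\mu^{\otimes 3}$ only controls integrals of continuous test functions, whereas $|W_1^{\phi_1}-W_2^{\phi_2}|$ is merely bounded and measurable. To bridge this gap, I will fix~$\varepsilon>0$ and use Lusin's theorem (together with a truncation at $\|W_i\|_\infty$) to approximate each~$W_i$ in $L^1({[0,1]}^3,\lambda^{\otimes 3})$ by a continuous function~$\widetilde W_i$ with $\|\widetilde W_i\|_\infty\le\|W_i\|_\infty$ and $\|W_i-\widetilde W_i\|_1\le\varepsilon$. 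Since both $(\mu^{(n)})^{\otimes 3}$ and~$\mu^{\otimes 3}$ have Lebesgue marginals on each triple of coordinates, the quantity $\int|W_i^{\phi_i}-\widetilde W_i^{\phi_i}|\diff(\mu^{(n)})^{\otimes 3}$, and likewise against~$\mu^{\otimes 3}$, is at most~$\varepsilon$. Two applications of the triangle inequality will reduce the problem to controlling $\int|\widetilde W_1^{\phi_1}-\widetilde W_2^{\phi_2}|\diff\mu^{\otimes 3}$, which is the limit of the analogous integrals against $(\mu^{(n)})^{\otimes 3}$ by weak convergence (the integrand is now continuous and bounded); since those in turn are bounded by $\int|W_1^{\phi_1}-W_2^{\phi_2}|\diff(\mu^{(n)})^{\otimes 3}+2\varepsilon$, which tends to~$2\varepsilon$, letting $\varepsilon\to 0$ will yield $\int|W_1^{\phi_1}-W_2^{\phi_2}|\diff\mu^{\otimes 3}=0$, as required.

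Finally, the measurability condition that $\phi_i(A)$ be $\lambda$-measurable for every $\mu$-measurable $A\subseteq J$ will follow from the continuity of the projection: the image of a Borel set under a continuous map is analytic, hence universally measurable and in particular Lebesgue measurable; a general $\mu$-measurable set decomposes into a Borel part and a subset of a Borel $\mu$-null set, reducing the general case to the Borel one.
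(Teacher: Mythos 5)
Your proposal follows essentially the same route as the paper's proof: after Janson's reduction to $J_1=J_2=[0,1]$ with Lebesgue measure, the paper also realizes the optimal coupling as a weak limit of near-optimal couplings, invoking compactness of the set of couplings in the weak topology and lower semicontinuity of $\mu\mapsto\norm{W_1^{\phi_1}-W_2^{\phi_2}}_1^{\mu}$, which it cites from Lov\'asz's book (Inequality~(8.21) there). Your Lusin-plus-weak-convergence argument is a correct, self-contained proof of exactly that semicontinuity step — the key observation that the Lusin error is controlled \emph{uniformly over couplings} because all of them have Lebesgue marginals is the right one — so the heart of your proof is sound and matches the paper's.

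One caveat concerns your final paragraph. The analytic-set argument correctly handles the image of a \emph{Borel} set $A$ under the continuous projection $\phi_i$. But the reduction of a general completion-measurable set to the Borel case does not close: writing $A=B\cup N'$ with $B$ Borel and $N'$ contained in a Borel $\mu$-null set $N$, the set $\phi_i(N')$ need not be Lebesgue measurable, since $\phi_i(N)$ can have positive Lebesgue measure (take $\mu$ concentrated on the graph of a measure-preserving bijection and $N$ its complement) and $N'$ is an arbitrary subset of it. This is harmless if ``$\mu$-measurable'' is read as ``Borel'', which is consistent with the paper's convention that all measures are defined on the Borel $\sigma$-algebra and is all that is used downstream in the proof of \cref{prop:big}; under the completion reading, the last sentence of your argument would have to be dropped rather than repaired.
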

\begin{proof}
  We only sketch the argument, as it is a straightforward extension of
  results already published, as referenced in what follows. We may
  first assume that $J_1=[0,1]=J_2$.  A proof of this fact for
  functions of two variable was given by Janson~\cite[Proof
    of~Theorem~7.1]{Jan13}.  His argument extends directly to
  functions of three variables and, for that matter, to any finite
  number of variables. (We point out that although Janson assume a
  kernel to be symmetric in its two variables, in the proof of
  Theorem~7.1 (and, more specifically, in the proof of Lemma~7.3),
  this assumption is used only to ensure that the obtained function is
  again symmetric (and thus a kernel in Janson's sense), which is not
  needed here.)

  Now a direct adaptation of the proof of Theorem~8.13 in the book by
  Lovász~\cite[p.~136]{Lov12} gives the lemma.  We just give an
  outline of the argument. We aim to show the statement of the theorem
  for~$J=J_1 \times J_2$, and~$\phi_i$ being the projection of~$J$
  on~$J_i$ for~$i\in\{1,2\}$.  We know that $\dist(W_1,W_2) =
  \inf_\mu\norm{W_1^{\phi_1}-W_2^{\phi_2}}^\mu_1$, where~$\mu$ ranges
  over all coupling measures of~$J=J_1 \times J_2$.  It suffices to
  show that this last infimum is in fact a minimum to deduce the
  statement.  As we assumed that~$J_1=[0,1]=J_2$, the space of
  coupling measures is compact in the weak topology.  Consequently, it
  is enough to show that the function~$\mu \mapsto \norm{W_1^{\phi_1}
    - W_2^{\phi_2}}^\mu_1$ is \emph{lower semicontinuous},
  \emph{i.e.}, if ${(\mu_n)}_n$ weakly converges to~$\mu$ then
\[
  \liminf_n \norm{W_1^{\phi_1} - W_2^{\phi_2}}^{\mu_n}_1 \geq
                              \norm{W_1^{\phi_1} - W_2^{\phi_2}}^\mu_1.
\]
This last inequality is Inequality~(8.21) on p.~137 of \emph{loc.~cit.} and the
proof follows as in the book.
\end{proof}

\paragraph{The twin-free case (proof of \cref{prop:big}).}

We can now prove a stronger rigidity: given two twin-free kernels
$(J_1,\mu_1,W_1)$ and~$(J_2,\mu_2,W_2)$, if $t(H,W_1)=t(H,W_2)$ for
every function~$H\in\mathcal{H}$, then there exist two sets~$N_1\subseteq J_1$
and~$N_2\subseteq J_2$ with~$\mu_1(N_1)=0=\mu_2(N_2)$ and an
invertible and measure-preserving map~$\rho\colon J_1\setminus N_1\to
J_2\setminus N_2$ such that
\begin{enumerate}
\item $\rho^{-1}$ is measure preserving; and\label{it:1}
\item $W_1$ and~$W_2^{\rho}$ are equal $\mu_1^3$-almost
  everywhere.\label{it:2}
\end{enumerate}

\begin{proof}[Proof of \cref{prop:big}]
  Theorem~\ref{thm:dens-dist} ensures that Lemma~\ref{lem:phi} applies:
  let~$(J,\mu)$ and~$\phi_1,\phi_2$ be the probability space and the
  applications given by this lemma, respectively. For~$i\in\{1,2\}$, we
  set~$\tilde{J}_i=\phi_i(J)$ and~$N_i=J_i\setminus\tilde{J}_i$. So
  $\mu_i(N_i)=0$ because $\phi_i$ is measure preserving and
  $\phi_i^{-1}(\tilde{J}_i)=J$.

  We start by showing that if two elements of~$J$ have the same image by~$\phi_1$, then
    they must have the same image by~$\phi_2$; that is, $\phi_2(\phi_1^{-1}(\{x\}))$ is a singleton
  for every~$x\in \tilde{J}_1$. Indeed, suppose that $\phi_1^{-1}(\{x\})$
  contains two distinct elements~$a$ and~$b$. Then
  $W_1^{\phi_1}(a,y,z)=W_1^{\phi_1}(b,y,z)$ for every~$(y,z)\in J^2$.
  Furthermore, we know that $W_1^{\phi_1}(a,y,z)=W_2^{\phi_2}(a,y,z)$ and
  $W_1^{\phi_1}(b,y,z)=W_2^{\phi_2}(b,y,z)$ for~$\mu^2$-almost every
  pair~$(y,z)\in J^2$.  Consequently,
  $W_2^{\phi_2}(a,y,z)=W_2^{\phi_2}(b,y,z)$ for~$\mu^2$-almost every
  pair~$(y,z)\in J^2$.  This implies that
  $W_2(\phi_2(a),y',z')=W_2(\phi_2(b),y',z')$ for~$\mu_2^2$-almost every
  pair~$(y',z')\in \tilde{J}_2^2$, because
  $\mu(J)=\mu(\phi_2^{-1}(\phi_2(J)))=\mu_2(\phi_2(J))$. (The last
  equality follows from the fact that~$\phi_2$ is measure preserving.) Since~$W_2$
  is twin-free, we deduce that $\phi_2(a)=\phi_2(b)$.  Therefore,
  we can define a map~$\rho\colon \tilde{J}_1\to \tilde{J}_2$ by
  setting~$\rho(x)$ to be the unique element of~$\phi_2(\phi_1^{-1}(\{x\}))$.

  Now, observe that there exists a subset~$N$ of~$\tilde{J}_1^3$ with
  $\mu_1^3(N)=0$ such that
\begin{itemize}
    \item $\tilde{J}_1^3\setminus N\subseteq{\phi_1(J)}^3$;
      \item $(x,y,z)\mapsto(\rho(x),\rho(y),\rho(z))$ is defined everywhere on $\tilde{J}_1^3\setminus N$; and
      \item $W_2^{\phi_2}(x,y,z)=W_1^{\phi_1}(x,y,z)$ whenever $(\phi_1(x),\phi_1(y),\phi_1(z))\notin N$.
\end{itemize}
For $(x_1,y_1,z_1)\in \tilde{J}_1^3\setminus N$, let $(x,y,z)\in J^3$ such that
$\phi_1(x)=x_1$, $\phi_1(y)=y_1$ and~$\phi_1(z)=z_1$. Then
$\rho(x_1)=\phi_2(x)$, $\rho(y_1)=\phi_2(y)$
and~$\rho(z_1)=\phi_2(z)$.  Therefore
$W_2^{\rho}(x_1,y_1,z_1)=W_2^{\phi_2}(x,y,z)=W_1^{\phi_1}(x,y,z)=W_1(x_1,y_1,z_1)$,
which yields~\ref{it:2}.

Let us show that~$\rho$ is measure preserving. We observe that if~$A\subseteq
\tilde{J}_2$, then $\rho^{-1}(A)=\phi_1(\phi_2^{-1}(A))$.  Indeed, by the
definitions
\begin{align*}
    \rho^{-1}(A)&=\sst{x\in \tilde{J}_1}{\phi_2(\phi_1^{-1}(\{x\}))\subseteq A}\\
            &=\sst{x\in \tilde{J}_1}{\phi_1^{-1}(\{x\})\subseteq\phi_2^{-1}(A)}\\
            &\subseteq \phi_1(\phi_2^{-1}(A)).
\end{align*}
Conversely, if $x\in\phi_1(\phi_2^{-1}(A))$, then there exists~$y\in J$
such that $\phi_1(y)=x$ and~$\phi_2(y)\in A$. Since
$\phi_2(\phi_1^{-1}(\{x\}))$ is a singleton, we deduce that
$\phi_2(\phi_1^{-1}(\{x\}))=\{\phi_2(y)\}$, which is contained in~$A$.
This proves the observation.
    
As a result, it suffices to prove that
$\mu_2(A)=\mu_1(\phi_1(\phi_2^{-1}(A)))$ to infer that
$\mu_1(\rho^{-1}(A))=\mu_2(A)$. As~$\phi_1$ and~$\phi_2$ are measure preserving,
it is enough to prove that $\phi_1^{-1}(\phi_1(\phi_2^{-1}(A)))=\phi_2^{-1}(A)$.
By definition, the set on the right side
is always contained in the set on the left side. For the
converse inclusion, fix $x\in\phi_1^{-1}(\phi_1(\phi_2^{-1}(A)))$ and
let us show that $\phi_2(x)\in A$.  There exists~$y\in J$ such that
$\phi_1(y)=\phi_1(x)$ and $\phi_2(y)\in A$. In particular,
$\phi_2(y)\in\phi_2(\phi_1^{-1}(\phi_1(\{x\})))$ and hence
$\phi_2(\phi_1^{-1}(\phi_1(\{x\})))=\{\phi_2(y)\}$. However,
$x\in\phi_1^{-1}(\phi_1(\{x\}))$ and thus $\phi_2(x)=\phi_2(y)\in A$.

To see that~$\rho$ is invertible, we first define $\rho'\colon \tilde{J}_2\to
\tilde{J}_1$.  To this end, one shows similarly as before that
$\phi_1(\phi_2^{-1}(\{x\}))$ is a singleton for every $x\in \tilde{J}_2$.
So $\rho'(x)$ can be defined as the unique element
of~$\phi_1(\phi_2^{-1})(\{x\})$. Now by symmetry of the roles played
by~$\phi_1$ and~$\phi_2$, one sees similarly as before that $\rho'$ is
measure preserving.  It remains to prove that $\rho'(\rho(x))=x$ for every~$x\in
\tilde{J}_1$. Fix~$x\in \tilde{J}_1$.  As
$\phi_2(\phi_1^{-1}(\{x\}))=\{\rho(x)\}$, there exists~$y\in J$ such that
$\phi_1(y)=x$ and~$\phi_2(y)=\rho(x)$. Consequently,
$x\in\phi_1(\phi_2^{-1}(\{\rho(x)\}))$, which is equal
to~$\{\rho'(\rho(x))\}$. This concludes the proof.
\end{proof}

\paragraph{The case of pure kernels (proof of \cref{cor:pure}).}

We finally establish our strongest rigidity theorem for kernels:
if~$(J_1,\mu_1,W_1)$ and~$(J_2,\mu_2,W_2)$ are two pure kernels and
$t(H,W_1)=t(H,W_2)$ for every function~$H\in\mathcal{H}$, then there exists an
isometry~$\rho \colon(J_1, d_{W_1}) \to(J_2, d_{W_2})$ such that~$W_1$
and~$W_2^{\rho}$ are equal $\mu_1^3$-almost everywhere.

\begin{proof}[Proof of \cref{cor:pure}]
  Let~$\rho \colon J_1\setminus N_1 \to J_2\setminus N_2$ be the map given by
  Proposition~\ref{prop:big} applied to~$(J_1,\mu_1,W_1)$
  and~$(J_2,\mu_2,W_2)$.  We first prove that we may restrict~$\rho$ to
  a set~$D_1$ with $\mu_1$-measure one such that if we fix any~$x \in D_1$, then
  $W_1(x,y,z)$ equals $W_2^\rho(x,y,z)$ for~$\mu_1^2$-almost every pair~$(y,z) \in
  J_1^2$. For~$x \in J_1$, we define~$I(x) \subset J_1^2$ to be the set of pairs~$(y,
  z)$ such that $W_1(x,y,z) \neq W_2^\rho(x,y,z)$. Further, let~$A$ be the set
  composed of each~$x \in J_1$ such that $\mu_1^2(I(x)) > 0$.

  We assert that $\mu_1(A)=0$. To prove this, we set~$A_\varepsilon = \sst{ x \in
  J_1}{\mu_1^2(I(x)) > \varepsilon }$ and we notice that $A = \bigcup_n
  A_{\varepsilon_n}$ where the union is taken over a decreasing
  sequence~${(\varepsilon_n)}_{n\in\mathbf{N}}$ that tends to~$0$. As the union is
  countable, it suffices to prove that $\mu_1(A_\varepsilon)=0$ for
  every~$\varepsilon > 0$ to conclude that $\mu_1(A)=0$. Fixing~$\varepsilon > 0$,
  it follows from the definitions that $W_1(x,y,z) \neq W_2^\rho(x,y,z)$ for every
  triple in~$\sst{(x,y,z)}{x \in A_\varepsilon\text{ and }(y,z) \in I(x)}$, which
  is a set of $\mu_1^3$-measure at least~$\varepsilon\cdot\mu_1(A_\varepsilon)$.
  Because of Property~\ref{it:2} of~Proposition~\ref{prop:big}, the previous
  statement implies that $\varepsilon\cdot\mu_1(A_\varepsilon) = 0$, hence
  $\mu_1(A_\varepsilon) = 0$.

  We define~$D_1 = J_1 \setminus N_1 \setminus A$ and~$D_2 = \rho(D_1) = J_2
  \setminus N_2 \setminus \rho(A)$. We know that $\mu_1(D_1) = 1$ and the
  equality $\mu_2(D_2) = 1$ follows from the fact that $\rho^{-1}$ is
  measure preserving.  The restriction~$\rho_{|D_1} \colon D_1 \to D_2$
  of~$\rho$ to~$D_1$ is an isometry between the metric spaces~$(D_1,d_{W_1})$
  and~$(D_2,d_{W_2})$.  Indeed, fixing~$(x,x') \in D_1^2$, we know from the
  construction of~$D_1$ that for~$\mu_1^2$-almost every pair~$(y,z) \in J_1^2$
  we have $W_1(x,y,z) = W_2^\rho(x,y,z)$ and $W_1(x',y,z) = W_2^\rho(x',y,z)$.
  So in particular $W_1(x,y,z) - W_1(x',y,z) = W_2^\rho(x,y,z)
  - W_2^\rho(x',y,z)$.  Consequently,
  \begin{align*}
    d_{W_1}(x,x')&=\int_{J_1^2}\abs{W_1(x,y,z) - W_1(x',y,z)} \diff\mu_1^2(y,z)\\
    &=\int_{J_1^2}\abs{W_2^\rho(x,y,z) - W_2^\rho(x',y,z)} \diff\mu_1^2(y,z)\\
    &=\int_{J_2^2}\abs{W_2(\rho(x),y',z') - W_2(\rho(x'),y',z')} \diff\mu_2^2(y',z')\\
    &=d_{W_2}(\rho(x),\rho(x')).
  \end{align*}
  This proves that $\rho_{|D_1}$ is an isometry.
  
  Now we assume that $\mu_1$ has full support and $(J_2,d_{W_2})$ is complete.
  In this case, $\rho_{|D_1}$ extends by continuity to an injective
  map~$\tilde{\rho}$ on~$J_1$. To prove this, it suffices to show that
  $\rho_{|D_1}$ is absolutely continuous and~$D_1$ is dense
  in~$(J_1,d_{W_1})$. The absolute continuity follows from the fact that
  $\rho_{|D_1}$ is an isometry. The set~$D_1$ is dense in~$J_1$ because
  every open set included in~$J_1 \setminus D_1$ is an open nullset, and hence
  is empty as~$\mu_1$ has full support.  By continuity of~$d_{W_1}$
  and~$d_{W_2}$ towards themselves the extension~$\tilde{\rho}$ is an isometry.

  To prove the second item, it suffices to apply the previous proof to the
    inverse~${(\rho_{|D_1})}^{-1}$ of~$\rho_{|D_1}$, where the roles played
  by~$(J_1,\mu_1,W_1)$ and~$(J_2,\mu_2,W_2)$ are inverted.
\end{proof}

\subsection{Flexibility of realizations and the probability that \texorpdfstring{$k$}{k} random points are in convex position}\label{s:nonrigid}

For limits of order types that can be realized by at least one measure
whose support has non-empty interior, the rigidity theorem completely
describes the space of realizations: they are the orbit of that one
realization under spherical transformations. In general, the situation
can be radically different as the following easy example shows.

\begin{example}
Let~$\ell_\diamond$ be the limit of a sequence of sets of points in
convex position. This limit assigns the probability~$1$ to each~$\diamond_k$, the order type of~$k$ points in convex position, and~$0$
to the rest. Every measure with convex support realizes~$\ell_\diamond$; this allows arbitrarily disconnected support, as
wells as support of any Hausdorff dimension between~$0$ and~$1$
(consider a Cantor set on~$[0,1]$ with that dimension and map the
interval to the circle).  
\end{example}

\noindent
The limit~$\ell_\diamond$ is exceptionally simple, it obviously maximizes~$\ell(\diamond_k)$. One may wonder
if this variety of realizations is also exceptional.
We construct a different limit with similar realization properties that plays a
role in combinatorial geometry as it gets close to minimizing~$\ell(\diamond_k)$ for~$k$ large enough.

\begin{theorem}\label{t:noregrep}
    There exists a limit~$\ell_E$ of order types such that
  for every~$t \in (0,1)$, the limit~$\ell_E$ can be
    realized by a measure with a support of Hausdorff dimension~$t$. Moreover, there is no measure~$\mu$ that realizes~$\ell_E$
    and is, on an open set of positive $\mu$-measure, absolutely
    continuous to the Lebesgue measure or to the length measure on a~$C^2$ curve of positive length.
\end{theorem}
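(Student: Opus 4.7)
The plan is to construct $\ell_E$ as the common limit of a one-parameter family of self-similar probability measures inspired by the Erd\H{o}s--Szekeres lower-bound construction, and then to exclude regular realizations by combining the resulting fast decay of $\ell_E(\diamond_k)$ with classical estimates on the probability of convex position for regular measures.

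For the existence and Hausdorff-dimension flexibility, for each $t\in(0,1)$ I would define a probability measure $\mu_t$ as the invariant measure of an iterated function system mimicking the recursive Erd\H{o}s--Szekeres cup/cap construction: two affine contractions placing small copies of $\mu_t$ on a slightly convex arc and on a slightly concave arc, with common contraction ratio $s=s(t)$ tuned so that $\dim_H\supp\mu_t=t$ through the standard similarity-dimension equation $2s^t=1$. Since $\mu_t$ charges no line, \cref{l:lmu} then guarantees that $\ell_{\mu_t}$ is a limit of order types. The key observation is that $t\mapsto\ell_{\mu_t}$ is in fact constant: the density $p(\omega,\mu_t)$ only depends on the combinatorics of which IFS branches are visited by a random $|\omega|$-tuple and on the orientation pattern produced inside each leaf cluster, neither of which depends on the contraction ratio $s$, while the $s$-dependent events (several samples landing in a single deep leaf) have vanishing combinatorial contribution. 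Carefully formalising this independence is the main technical step; it yields a single limit $\ell_E$ realized by $\mu_t$ for every $t\in(0,1)$.

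The second ingredient is the quantitative bound $\ell_E(\diamond_k)=2^{-k^2/8+O(k\log k)}$, obtained by a direct Erd\H{o}s--Szekeres-style counting on the tree of IFS branches: a convex $k$-subpattern forces the addresses of the sampled points to decompose into a controlled number of ``cup'' and ``cap'' chains whose lengths sum to $k$, and summing the $2^{-k}$ branching weights over all admissible address patterns gives the stated decay.

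Finally, to rule out regular realizations I would confront the above decay with a simple lower bound. If $\mu$ realizes $\ell_E$ and is absolutely continuous with respect to Lebesgue measure on an open set $U$ with $\mu(U)>0$, then by Lebesgue differentiation one can pick a small disk $B\subseteq U$ on which $\mu$ has density bounded between two positive constants. Conditioning on all $k$ points falling in $B$ and invoking Valtr's formula (or any classical estimate producing a lower bound of the form $c^k$ on the convex-position probability of $k$ uniform random points in a convex body) yields $\ell_\mu(\diamond_k)\geq\mu(B)^k c^k=2^{-O(k)}$, which contradicts $\ell_E(\diamond_k)=2^{-\Omega(k^2)}$ for $k$ large enough. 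When instead $\mu$ is (bounded density times) length measure on a $C^2$ curve of positive length, the assumption that $\mu$ charges no line forces nonzero curvature on a sub-arc $\gamma$ of positive length, on which the curve is strictly convex; every finite subset of $\gamma$ is then in convex position, so $\ell_\mu(\diamond_k)\geq\mu(\gamma)^k=2^{-O(k)}$, yielding the same contradiction. The main obstacle in this plan is the careful tree combinatorics needed both for the $t$-independence of $\ell_{\mu_t}$ and for pinning down the sharp $k^2/8$ exponent.
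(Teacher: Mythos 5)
Your overall strategy — a self-similar two-branch measure, dimension flexibility, the decay $\ell_E(\diamond_k)=2^{-k^2/8+O(k\log k)}$, and a contradiction with Valtr's bound in the absolutely continuous case and with a strictly convex sub-arc in the $C^2$ case — is the same as the paper's, which combines \cref{l:flat}, \cref{l:decay-LE} and \cref{l:sylvester}. However, the central step is precisely where your construction, as stated, breaks. The paper's IFS is genuinely \emph{anisotropic}: $\varphi_0,\varphi_1$ send $[0,1]^2$ to the flat rectangles $R_0=[0,a]\times[0,b]$ and $R_1=[1-a,1]\times[1-b,1]$ with $0<b<a<\tfrac12$, and the identification $\ell_{\mu^{a,b}}=\ell_E$ hinges on the flatness condition $b\leq(1-2a)(1-2b)a$, which forces every point of the attractor in $R_1$ to lie above every line through two points in $R_0$ (and symmetrically), so that the orientation of a sampled triple is a function of the addresses alone. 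A common contraction ratio $s$ in both coordinates, which is what your similarity-dimension equation $2s^t=1$ presupposes, cannot satisfy this: setting $a=b=s$ turns the condition into $(1-2s)^2\geq 1$, which has no solution in $(0,\tfrac12)$. With isotropic scaling the orientations are not determined by address combinatorics, and there is no reason for $\ell_{\mu_t}$ to be independent of $t$. You would need to let the vertical scale be much smaller than the horizontal one (after which the Hausdorff dimension of the self-affine attractor is governed by the horizontal scale $a$ alone, giving $t=\log 2/\log(1/a)$ rather than a single similarity exponent), or else prove a comparable separation lemma adapted to your cup/cap arc placement.

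A second, organizational, point: rather than arguing that $t\mapsto\ell_{\mu_t}$ is constant by comparing two measures, the paper first defines $\ell_E$ purely combinatorially, as the chirotope on $(\{0,1\}^\N,\text{coin-tossing})$ that compares longest common prefixes under the lexicographic order, and then matches each geometric measure $\mu^{a,b}$ against this single fixed reference (\cref{l:flat}). This is both cleaner and exactly what the flatness lemma is designed to do. Your decay computation (via the cup/cap recursion) and your exclusion of regular realizations (Lebesgue differentiation plus Valtr's $\frac{1}{k!^2}\binom{2k-1}{k-1}^2$ for the planar case, and the positive-curvature sub-arc giving $2^{-O(k)}$ for the $C^2$ case) otherwise agree with \cref{l:decay-LE} and \cref{l:sylvester}.
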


\noindent
\Cref{t:noregrep} will follow from \cref{l:flat} for the first statement,
and \cref{l:decay-LE} and~\cref{l:sylvester} for the second
statement.

\begin{figure}[!t]
  \begin{center}
  \def\a{0.4} \def\b{0.30} \def\c{3.6} \def\d{3.6}
  \newcommand\phizero[1]{
    \begin{scope}[xscale=\a,yscale=\b] #1 \end{scope}}
  \newcommand\phione[1]{
    \begin{scope}[xshift=\c cm,yshift=\d cm]
      \begin{scope}[yscale=-\b,xscale=-\a]
        #1 \end{scope}\end{scope}}
  \newcommand{\bigphi}[1]{\phizero{#1}\phione{#1}}
  \newcommand{\mytikz}[1]{\begin{tikzpicture}#1
      \node at (0,-.6) {};\end{tikzpicture}}
  \def\rect{\draw[fill=gray,color=gray] (0,0) rectangle (\c,\d);}
  \def\myframe{\draw[dashed] (0,0) rectangle (\c,\d);}
  \begin{tabular}{c c c}
    \mytikz{ 
      \myframe%
      \bigphi{\rect%
        \draw[<->,>=stealth,thick] (0,0) -- ({\c},0) node[midway,above] {$a$};
        \draw[<->,>=stealth,thick] (0,0) -- (0,\d) node[midway,right] {$b$};}
      \draw[<->,>=stealth,thick] (0,-0.2) -- ({\c},-0.2) node[midway,below] {$1$};
      \draw[<->,>=stealth,thick] (-0.2,0) -- (-0.2,\d) node[midway,left] {$1$};
    }&
    \mytikz{\myframe\bigphi{\bigphi{\rect}}}&
    \mytikz{\myframe\bigphi{\bigphi{\bigphi{\rect}}}}\\
    $A_1$ & $A_2$ & $A_3$
  \end{tabular}
  \end{center}
  \caption{Definition of~$\ell_E$.\label{f:lE}}
\end{figure}

\subsubsection{Definition of~\texorpdfstring{$\ell_E$}{l_E}}

It is convenient to give two presentations of~$\ell_E$, one geometric
and the other combinatorial.

\medskip

Let us start with the combinatorial definition of~$\ell_E$. Consider
the space~$E = {\{0,1\}}^\N$ equipped with the coin-tossing measure. For~$u,v \in E$,
let~$u \wedge v$ be the longest common prefix of~$u$
and~$v$ and let~$\prec_{lex}$ be the lexicographic order on~$E$. We
define $\ell_E$ as a chirotope $\chi$ on $E$. Specifically, let~$u,v,w
\in E$ and, without loss of generality, suppose that $u \prec_{lex} v
\prec_{lex} w$. We set~$\chi(u,v,w) = 1$ if~$|u\wedge v| < |v \wedge
w|$ and~$\chi(u,v,w) = -1$ otherwise. For any order type~$\omega$ of
size~$k$, we let~$\ell_E(\omega)$ be the probability that the
restriction of~$\chi$ to~$k$ random elements of~$E$ chosen
independently from the coin-tossing distribution equals, after
unlabelling,~$\omega$. The fact that~$\ell_E$ is a limit of order types
easily follows from the geometric viewpoint.

\medskip

Let us now give a geometric presentation of~$\ell_E$; refer to
      \cref{f:lE}. As is usual, let~$\{0,1\}^*$ be the collection of all finite binary words.
      Fix some parameters~$a$ and~$b$ such that $0 < b < a < \frac{1}{2}$, and
    define the rectangles~$R = {[0,1]}^2, R_0 = [0,a] \times [0,b]$ and~$R_1
      = [1-a, 1] \times [1-b,1]$. For each~$i\in\{0,1\}$, let~$\varphi_i$ be the
affine transform fixing~$(i,i)$ and mapping~$R$ to~$R_i$.
To any word~$w = i_1i_2 \dotso i_n \in
{\{0,1\}}^*$ we associate the set~$R_{w} = \varphi_{i_n} \circ \varphi_{i_{n-1}}
      \circ \dots \circ \varphi_{i_1}(R)$ and let~$\mu^{a,b}$ be the probability
measure such that $\mu^{a,b}(R_w)=\frac{1}{2^{|w|}}$ for every~$w \in
{\{0,1\}}^*$. We notice that $R_w \subset R_v$ if and only if~$v$ is a
prefix of~$w$. Letting~$A_n = \bigcup_{w \in {\{0,1\}}^n} R_w$ for~$n
\geq 1$, the support of~$\mu^{a,b}$ is~$A = \bigcap_{n\ge1} A_n$.

\begin{lemma}\label{l:flat}
    If~$(a,b)\in{\left(0,\frac12\right)}^2$ and~$b \leq (1 - 2a)(1 - 2b)a$, then $\ell_{\mu^{a,b}} = \ell_E$.
      In particular,~$\ell_E$ is a limit of order types.
\end{lemma}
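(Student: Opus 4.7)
The plan is to prove $\ell_{\mu^{a,b}}=\ell_E$ by constructing a measure-preserving address map $T\colon E\to\supp\mu^{a,b}$ and showing that, for almost every triple $(u,v,w)\in E^3$, the orientation of $\bigl(T(u),T(v),T(w)\bigr)$ is determined by $\chi(u,v,w)$. Sampling $k$ points i.i.d.\ from $\mu^{a,b}$ is then equivalent, via $T^{-1}$, to sampling $k$ points from the coin-tossing measure on $E$, so the induced distributions over order types coincide. The second assertion is then immediate from \cref{l:lmu}, since $\mu^{a,b}$ is a finite Borel measure that charges no line.

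The map $T$ is the standard IFS address map: for $u=u_1u_2\cdots\in E$, $T(u)$ is the unique point of $\bigcap_{n\ge 1}R_{u_nu_{n-1}\cdots u_1}$. The intersection is nested, since $R_{u_{n+1}u_n\cdots u_1}=\varphi_{u_1}\circ\cdots\circ\varphi_{u_n}(R_{u_{n+1}})\subseteq R_{u_n\cdots u_1}$, its diameters shrink to $0$ because $\max(a,b)<1/2$, and the defining identity $\mu^{a,b}(R_w)=2^{-|w|}$ shows that $T$ pushes coin-tossing onto $\mu^{a,b}$. Fix now distinct $u\prec_{lex}v\prec_{lex}w$ in $E$. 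A pigeonhole argument rules out $|u\wedge v|=|v\wedge w|$, so $|u\wedge w|=\min(|u\wedge v|,|v\wedge w|)=:\ell$, and the three sequences share a common prefix of length $\ell$. Since $\Phi=\varphi_{u_1}\circ\cdots\circ\varphi_{u_\ell}$ has positive determinant and $T(x)=\Phi(T(\sigma^\ell x))$ for every $x\in\{u,v,w\}$, where $\sigma$ is the left shift on $E$, the orientation of $\bigl(T(u),T(v),T(w)\bigr)$ coincides with that of the shifted triple, reducing to $\ell=0$. Two sub-cases remain: either $u_1=0$ with $v_1=w_1=1$ (Case A, $\chi=+1$) or $u_1=v_1=0$ with $w_1=1$ (Case B, $\chi=-1$). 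The central rotation $\rho\colon(x,y)\mapsto(1-x,1-y)$ is orientation-preserving, preserves $\mu^{a,b}$, and conjugates $\varphi_0$ with $\varphi_1$; it therefore exchanges Cases A and B, and it suffices to handle Case A.

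In Case A, $T(u)\in R_0$, and $v\prec_{lex}w$ combined with the IFS structure forces $T(v)\in\Psi(R_0)$ and $T(w)\in\Psi(R_1)$ for the orientation-preserving affine map $\Psi=\varphi_1\circ\varphi_{v_2}\circ\cdots\circ\varphi_{v_n}$, where $n=|v\wedge w|$. The core of the argument — and the sole place where the hypothesis $b\le(1-2a)(1-2b)a$ is used — is the claim that $\det(q-p,\,r-p)$ has a constant sign over all $p\in R_0$, $q\in\Psi(R_0)$, $r\in\Psi(R_1)$, as $\Psi$ varies. The expression $\det(q-p,r-p)$ is affine in each of the six coordinates of $p,q,r$ separately (the $p_xp_y$ cross-terms cancel in the expansion), so its extremum over the enclosing axis-parallel box is attained at an explicit vertex. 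In the base case $\Psi=\varphi_1$, identifying this vertex and simplifying yields a polynomial in $a,b$ whose nonpositivity is equivalent to the stated hypothesis — this mechanical but unpleasant polynomial manipulation is the main technical obstacle. For $n\ge 2$, the constraints on $q$ and $r$ tighten as $\Psi$ becomes deeper; an analogous optimization over the smaller boxes shows the sign is preserved, completing the argument.
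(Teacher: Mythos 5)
Your overall strategy is the same as the paper's: the same address map pushing the coin-tossing measure onto $\mu^{a,b}$, the same reduction modulo the common prefix via the orientation-preserving maps $\varphi_i$, the same use of the central symmetry to halve the case analysis, and the same final geometric separation claim. The reduction steps are all correct (in particular the pigeonhole exclusion of $|u\wedge v|=|v\wedge w|$ and the localization $T(v)\in\Psi(R_0)$, $T(w)\in\Psi(R_1)$).

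The gap is in your last paragraph, precisely where the hypothesis $b\le(1-2a)(1-2b)a$ must do its work. The claim that for $n\ge2$ ``the constraints on $q$ and $r$ tighten'' is not correct as stated: for $\Psi=\varphi_1\circ\varphi_0$, say, the box $\Psi(R_1)=\varphi_1(\varphi_0(R_1))$ is contained in $\varphi_1(R_0)$ and is disjoint from $\varphi_1(R_1)$, so this configuration is not a sub-configuration of the base case $\Psi=\varphi_1$, and ``an analogous optimization over the smaller boxes'' amounts to a fresh vertex computation for each of the infinitely many words $v_2\cdots v_n$; you need a uniform argument. (A smaller inaccuracy: the base-case vertex condition is implied by, but not equivalent to, the stated hypothesis --- the sharp base-case threshold is slightly weaker.) The clean uniform argument, which is what the paper uses, is a slope computation rather than vertex enumeration: for $x\in R_0$ and $y\in R_1$ the slope of the line $xy$ lies in $[\,1-2b,\ \tfrac{1}{1-2a}\,]$, and each $\varphi_i$ has linear part $\mathrm{diag}(a,b)$ and hence multiplies slopes by $b/a<1$. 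Therefore for any $\Psi$ of depth $n\ge1$, any $q\in\Psi(R_0)$ and $r\in\Psi(R_1)$ satisfy $0<\mathrm{slope}(q,r)\le(b/a)^n\tfrac{1}{1-2a}\le\tfrac{b}{a(1-2a)}$, while $\mathrm{slope}(p,q)\ge1-2b$ for every $p\in R_0$ and $q\in\Psi(R_0)\subseteq R_1$. The hypothesis is exactly $\tfrac{b}{a(1-2a)}\le1-2b$, and since $p_x<q_x<r_x$ this forces $\det(q-p,r-p)$ to have constant sign simultaneously for all $\Psi$. With that substitution for your multilinearity argument, the proof is complete.
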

\begin{proof}
  The measure~${\mu^{a,b}}$ is the image of the coin-tossing probability
    on~${\{0,1\}}^\N$ by the function $\Psi_{a,b}$ that assigns to~$w \in
    {\{0,1\}}^\N$ the unique point in~$\bigcap_{w_v} R_{w_v}$, where the
  intersection is taken over all prefixes~$w_v$ of~$w$.

  We shall prove that every point in~$A \cap R_1$ lies above any line
  spanned by two points in~$A \cap R_0$ provided that
  \[ b \leq (1 - 2a)(1 - 2b)a.  \]
  Since~$A$ is stable by the symmetry of center~$(\frac12,\frac12)$,
  it would then follow that every point in~$A\cap R_0$ lies below any line
  spanned by two points in~$A\cap R_1$. Let us show how this property of~$A$
    allows us to conclude the proof.
  Indeed, this property yields that~$\ell_{{\mu^{a,b}}}$ is fully determined.
  Let~$\Psi_{a,b}(u)$, $\Psi_{a,b}(v)$ and~$\Psi_{a,b}(w)$ be three pairwise distinct points
    in~$A$ with~$u,v,w\in{\{0,1\}}^\N$
  and assume that $u\prec_{\text{lex}}v\prec_{\text{lex}}w$.
  If~$|u\wedge v|<|v\wedge w|$, set~$p=u\wedge v$. 
  Since~$u\prec_{\text{lex}}v$, the word~$p.0$ is a prefix of~$u$
  and~$p.1$ is a prefix of~$v$, and therefore of~$w$.
    It follows that~$\Psi_{a,b}(u)\in R_{p.0}$ and~$\{\Psi_{a,b}(v),\Psi_{a,b}(w)\}\subset R_{p.1}$.
  Moreover, the abscissa of~$\Psi_{a,b}(v)$ is smaller than that of~$\Psi_{a,b}(w)$
  because~$v\prec_{\text{lex}}w$.
  Consequently, $\chi(\Psi_{a,b}(u),\Psi_{a,b}(v),\Psi_{a,b}(w))=1=\chi_E(u,v,w)$.
  The proof that $\chi(\Psi_{a,b}(u),\Psi_{a,b}(v),\Psi_{a,b}(w))=\chi_E(u,v,w)$
  when~$|u\wedge v|>|v\wedge w|$ is similar and we omit it.
  
  It remains to prove that~$A$ indeed fulfills the property announced.
  For two distinct points~$x,y\in A$, let~$\alpha(x,y)$ be the angle between
  the line~$h(x,y)$ and the abscissa axis (so~$\alpha(x,y)$ is defined modulo~$\pi$).
  If~$x\in R_0$ and~$y\in R_1$, then
  \[
    1-2b \leq \tan\alpha(x,y) \leq \frac{1}{1-2a},
  \]
  since the minimum is obtained when~$x_m=(0,b)$ and~$y_m=(1,1-b)$, while
  the maximum is obtained when~$x_M=(a,0)$ and~$y_M=(1-a,1)$ (see Figure~\ref{fig-Risflat}, left.)
  The application of a function~$\varphi_i$ with~$i\in\{1,2\}$ acts as follows
    \[\tan\alpha(\varphi_i(x),\varphi_i(y))=\frac{b}{a}\tan\alpha(x,y).\]
  Since~$\frac{b}{a}<1$, it further holds
  that~$\tan\alpha(\varphi_i(x),\varphi_i(y))<\tan\alpha(x,y)$.
By iterating this property, it eventually follows that~$\tan\alpha(x,y)\leq\frac1{1-2a}$
  for every~$x,y\in A$ such that~$x\neq y$ and~$x$ has smaller abscissa than~$y$.

  Let~$x,y\in A\cap R_0$ and~$z\in A\cap R_1$ such that~$x$ has smaller abscissa than~$y$.
  Note that~$z$ lies above the line~$h(x,y)$ if and only if
  $\alpha(x,y)\leq\alpha(x,z)$, where the values of the angles are taken
  in~$(-\frac\pi2,\frac\pi2)$ (see Figure~\ref{fig-Risflat}, right),
  which in turn is equivalent to $\tan\alpha(x,y)\leq\tan\alpha(x,z)$.
  Since~$\tan\alpha(x,y)\leq \frac{b}{a}\frac{1}{1-2a}$
  and~$\tan\alpha(x,z)\geq 1-2b$, it suffices that
  $\frac{1}{1-2a}\frac{b}{a}\leq 1-2b$,
    \emph{i.e.},~$b\leq(1-2a)(1-2b)a$.
  This proves the announced property, thereby completing the demonstration.
\end{proof}

\begin{figure}
    \begin{center}
    \def\a{0.4} \def\b{0.3} \def\c{3.6} \def\d{3.6} \def\sc{.9}
    \newcommand\phizero[1]{\begin{scope}[xscale=\a,yscale=\b] #1 \end{scope}}
    \newcommand\phione[1]{
      \begin{scope}[xshift=\c cm,yshift=\d cm]
        \begin{scope}[yscale=-\b,xscale=-\a]
          #1 \end{scope}\end{scope}}
    \tikzstyle{node}=[circle,draw,fill=black,scale=0.4]
    \def\rect{\draw[fill,color=gray] (0,0) rectangle (\c,\d);}
    \def\myframe{\draw[dashed] (0,0) rectangle (\c,\d);}
    \begin{tikzpicture}[scale=\sc]
        \myframe
        \phizero{\rect
          \node[node,label=180:{$(0,b)$}] (A1) at (0,\d) {};
          \node[node,label=-90:{$(a,0)$}] (B1) at (\c,0) {};
        }
        \phione{
          \rect
          \node[node,label=0:{$(1,1-b)$}] (A2) at (0,\d) {};
          \node[node,label=90:{$(1-a,1)$}] (B2) at (\c,0) {};
        }
        \node at (0,-.6) {};
        \draw[blue,fill=blue!50] (A1)--++(0:.6) arc (0:22:.6)--(A1);
        \draw[blue,fill=blue!50] (B1)--++(0:.6) arc (0:78:.6)--(B1);
        \draw[thick](A1)--(A2)(B1)--(B2);
      \end{tikzpicture}
    \begin{tikzpicture}[scale=\sc]
        \myframe
        \phizero{\rect
          \node[node,label=90:$x$] (X) at (.5,1) {};
          \node[node,label=90:$y$] (Y) at (3,2) {};
          \coordinate (X') at (-1,.4);
          \coordinate (Y') at (10,4.8);
        }
        \phione{
          \rect
          \node[node,label=90:$z$] (Z) at (.7,2) {};
        }
        \node at (0,-.6) {};
        \draw[blue,fill=blue!50] (X)--++(0:.7) arc (0:41:.7)--(X);
        \draw[black,fill=blue!50!black] (X)--++(0:.5) arc (0:17:.5)--(X);
        \draw[thick](X)--(Z)(X')--(Y');
      \end{tikzpicture}
  \end{center}
  \caption{
    Left: Minimal and maximal value for~$\alpha(x,y)$ when~$x\in R_0$ and~$y\in R_1$.
    Right: $z\in R_1$ is above~$h(x,y)$ if~$\alpha(x,y)<\alpha(x,z)$.\label{fig-Risflat}}
\end{figure}

\subsubsection{The Erd\H{o}s-Sylvester problem} 

As mentioned already in the introduction, the limit of order types~$\ell_E$ was
    constructed to attack the Erd\H{o}s-Sylvester problem of determining~$c_k$.
    In fact, both upper and lower bounds on~$c_k$ go back to the classical
    results of Erd\H{o}s and Szekeres who proved that for every integer~$k$,
    there exists~$s(k)$ such that any planar point set in general position and
    of size at least~$s(k)$ contains~$k$ points in convex position. Erd\H{o}s
    and Szekeres also gave a construction showing that~$s(k)\geq2^{k-2}+1$ and
    they conjectured this value to be tight. This bound was nearly achieved in a recent
    breakthrough of Suk~\cite{Suk-ES} who improved the upper bound
    on~$s(k)$ to~$2^{k+6k^{2/3} \log k}$.  The next proposition is folklore.

\begin{proposition}\label{p:decay-low}
      For every integer~$k\ge 4$ and every limit of order types~$\ell$,
      \[\ell(\diamond_k) \ge 2^{-k^2+\littleo{k^2}}.\]
\end{proposition}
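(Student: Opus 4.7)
The plan is to combine Suk's recent breakthrough on the Erdős--Szekeres function $s(k)$ with a standard double-counting argument. Recall that Suk's bound, quoted just before the statement, asserts $s(k) \le 2^{k+6k^{2/3}\log k}$, meaning any set of at least $s(k)$ points in general position contains $k$ points in convex position.

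First I would reduce the proposition to a uniform lower bound on $p(\diamond_k,\omega)$ valid for every sufficiently large order type $\omega$. Let $\omega$ be an order type of size $n\ge s(k)$ and fix a realization $P$ of $\omega$. By the Erdős--Szekeres property, every $s(k)$-element subset of $P$ contains at least one $k$-subset in convex position. Double counting the pairs $(Q,R)$ with $Q\subseteq R\subseteq P$, $|Q|=k$, $|R|=s(k)$, and $Q$ in convex position, one sees that the number $N_k(P)$ of $k$-subsets of $P$ in convex position satisfies
\[
N_k(P)\cdot \binom{n-k}{s(k)-k} \;\ge\; \binom{n}{s(k)},
\]
so that
\[
p(\diamond_k,\omega) \;=\; \frac{N_k(P)}{\binom{n}{k}} \;\ge\; \frac{\binom{n}{s(k)}}{\binom{n}{k}\binom{n-k}{s(k)-k}} \;=\; \frac{1}{\binom{s(k)}{k}},
\]
using the identity $\binom{n}{s(k)}\binom{s(k)}{k}=\binom{n}{k}\binom{n-k}{s(k)-k}$.

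Next I would pass to the limit. For any limit $\ell$ of order types, pick a convergent sequence $(\omega_n)$ of order types with $|\omega_n|\to\infty$ and $\lim_n p(\diamond_k,\omega_n)=\ell(\diamond_k)$. Since the bound $p(\diamond_k,\omega_n)\ge\binom{s(k)}{k}^{-1}$ holds for all but finitely many $n$, taking the limit gives $\ell(\diamond_k)\ge\binom{s(k)}{k}^{-1}$.

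Finally, I would estimate the binomial. Bounding crudely, $\binom{s(k)}{k}\le s(k)^k\le 2^{k\,(k+6k^{2/3}\log k)}=2^{k^2+6k^{5/3}\log k}=2^{k^2+o(k^2)}$, whence $\ell(\diamond_k)\ge 2^{-k^2+o(k^2)}$, as desired. No step here is a real obstacle: the only nontrivial input is Suk's bound on $s(k)$, and everything else is an averaging identity followed by the standard passage from densities in finite order types to the value of the limit. If one wanted to avoid invoking Suk and rely only on the classical Erdős--Szekeres bound $s(k)\le\binom{2k-4}{k-2}+1=2^{2k+o(k)}$, the same argument still yields $\ell(\diamond_k)\ge 2^{-2k^2+o(k^2)}$, which is weaker than the stated proposition; so the only delicate point is that one does need the recently improved (sub-exponential correction) bound on $s(k)$ to reach the exponent $-k^2$ on the nose.
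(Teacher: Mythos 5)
Your argument is correct and follows essentially the same route as the paper: both establish $p(\diamond_k,\omega)\ge\binom{s(k)}{k}^{-1}$ for every order type $\omega$ of size at least $s(k)$ (you by an explicit double count, the paper by the conditioning identity $p(\diamond_k,\omega_n)=\sum_{\omega\in\mathcal O_{s(k)}}p(\diamond_k,\omega)p(\omega,\omega_n)$, which is the same averaging identity), then pass to the limit and estimate $\binom{s(k)}{k}\le 2^{k^2+o(k^2)}$ via Suk's bound on $s(k)$. Your closing remark that Suk's bound is genuinely needed here (the classical Erdős--Szekeres bound $s(k)\le\binom{2k-4}{k-2}+1$ would only give $2^{-2k^2+o(k^2)}$) is also correct and matches the paper, which quotes Suk's result immediately before the proposition for exactly this reason.
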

\begin{proof}
    Let~${(\ot_n)}_{n \in \N}$ be a convergent sequence of order types
  with limit $\ell$. The Erd\H{o}s-Szekeres theorem ensures that $p(\diamond_k,\ot) \ge
  \frac1{\binom{s(k)}k}$ for any order type~$\ot$ of size~$s(k)$.
    Let~$k \ge 4$ and fix some~$n_0$ such that
$|\ot_n| \ge s(k)$ whenever~$n \ge n_0$. It then follows that
\[ \forall n \ge n_0, \qquad p(\diamond_k,\ot_n) = \sum_{\ot \in
  \sot_{s(k)}} p(\diamond_k,\ot)p(\ot,\ot_n) \ge \frac1{\binom{s(k)}k}
\sum_{\ot \in \sot_{s(k)}}p(\ot,\ot_n) = \frac1{\binom{s(k)}k}\]
The first identity is a standard conditional probability argument:
instead of taking a random $k$-element subset of a realization of~$\ot_n$, we
      first take an $s(k)$-element subset, consider their order type~$\ot$, then
      take a random $k$-element subset of a realization of~$\ot$ and estimate the
      probability that it has order type~$\diamond_k$
conditioned on the order type of~$\ot$. The last identity simply
expresses that the sum for all order types $\ot$ of size $s(k)$ of the
density $p(\ot,\ot_n)$ is~$1$. We derive that $p(\diamond_k,\ot_n) \ge
    \frac1{\binom{s(k)}k} \ge \frac{1}{2^{k^2+\littleo{k^2}}}$ for any~$n \ge
n_0$, so $\ell(\diamond_k) = \lim_{n \to \infty} p(\diamond_k,\ot_n)
    \ge \frac{1}{2^{k^2+\littleo{k^2}}}$.
\end{proof}

\noindent
Our example~$\ell_E$ matches the order of growth in the exponent of
\cref{p:decay-low}.  We don't know of any previous result in this direction.

\begin{proposition}\label{l:decay-LE}
    $\ell_E(\diamond_k) = 2^{-\frac{k^2}8+\bigo{k \log k}}$.
\end{proposition}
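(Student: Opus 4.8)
The plan is to work entirely with the combinatorial presentation of $\ell_E$ on $E=\{0,1\}^{\mathbf N}$, and to compute $\ell_E(\diamond_k)$ as a finite sum of probabilities indexed by ``tree shapes''. Given $k$ strings $u_1,\dots,u_k\in E$, call their \emph{shape} the isomorphism type of the finite binary tree whose leaves are $u_1,\dots,u_k$ ordered by $\prec_{lex}$ and whose internal nodes are the pairwise longest common prefixes. Since $\chi(u,v,w)$ only compares $|u\wedge v|$ with $|v\wedge w|$, the order type of $\{u_1,\dots,u_k\}$ --- in particular whether it is $\diamond_k$ --- depends only on the shape; so $\ell_E(\diamond_k)$ is the probability that $k$ fair-coin strings have one of the shapes realizing $k$ points in convex position. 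The first step is to list those shapes.

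I would identify them using the geometric realization $\mu^{a,b}$ of \cref{l:flat} (with $b\le(1-2a)(1-2b)a$). Writing $A=\varphi_0(A)\sqcup\varphi_1(A)$, \cref{l:flat} gives the flatness of the construction, and inspecting its proof gives the sharper fact that the slope of any segment joining $A\cap R_0$ to $A\cap R_1$ is strictly larger than the slope of any segment with both endpoints in $A\cap R_0$, or with both in $A\cap R_1$. For a finite $X\subseteq A$ split as $X=X_0\sqcup X_1$ according to the first bit, a short slope argument then shows that a concave (upper‑hull) chain crosses from $X_0$ to $X_1$ using at most one point of $X_0$, and symmetrically a convex (lower‑hull) chain uses at most one point of $X_1$; consequently, if $X$ is in convex position then $X_0$ equals its own lower hull (it is a \emph{cup}) and $X_1$ equals its own upper hull (it is a \emph{cap}), and conversely flatness shows a cup glued to a cap is in convex position. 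Iterating the same dichotomy inside $\varphi_0(A)\cong A$ shows that a subset of $A$ is a cup iff its shape is the unique ``left caterpillar'' $L_m$ on $m$ leaves (all right subtrees are leaves), and symmetrically for caps; hence the convex shapes on $k$ leaves are exactly the $k-1$ trees obtained by gluing such a caterpillar of size $\ell$ to one of size $k-\ell$ at the root, $1\le\ell\le k-1$.

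It remains to evaluate the probability that $k$ independent strings have one of these shapes. Conditioning on the first bit at which the $k$ strings do not all agree gives $\Pr[\text{root split}=(\ell,k-\ell)]=\binom{k}{\ell}/(2^{k}-2)$, and then the two sides are independent copies of the same problem on fewer strings, so
\[
\ell_E(\diamond_k)\;=\;\sum_{\ell=1}^{k-1}\frac{\binom{k}{\ell}}{2^{k}-2}\,\omega_\ell\,\omega_{k-\ell},
\qquad\text{where}\qquad
\omega_m\;=\;\prod_{j=3}^{m}\frac{j}{2^{j}-2},\quad \omega_1=\omega_2=1,
\]
the last product coming from the recursion $\omega_m=\tfrac{\binom{m}{m-1}}{2^{m}-2}\,\omega_{m-1}$ obtained by peeling one leaf off the caterpillar at a time. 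Finally, since $\log_2\prod_{j=3}^{m}\frac{j}{2^{j}-2}=-\tfrac{m^2}{2}+\bigo{m\log m}$ (using $m!=2^{\bigo{m\log m}}$ and that $\prod_j(1-2^{1-j})$ converges) and $2^{-k}\le\binom{k}{\ell}/(2^{k}-2)\le 1$, each summand is $2^{-\frac12(\ell^2+(k-\ell)^2)+\bigo{k\log k}}$; this is largest at $\ell=\lfloor k/2\rfloor$, and bounding the sum above by $k$ times and below by its largest term yields the stated estimate $\ell_E(\diamond_k)=2^{-k^2/8+\bigo{k\log k}}$.

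The main obstacle is the second step: turning ``convex position'' into the caterpillar description. The slope inequality extracted from the proof of \cref{l:flat} must be used carefully to exclude any mixed behaviour of the two hulls, and the recursion inside $\varphi_0(A)\cong A$ (together with the fact that $\varphi_1$ reverses orientation, which swaps the roles of cups and caps but not the counting) must be set up precisely; once the shapes are pinned down, the third and fourth steps are routine.
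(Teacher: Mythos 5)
Your reduction to tree shapes and the resulting exact recursion are, as far as I can check, correct, and they constitute a genuinely different (and in principle sharper) route than the paper's: the paper never decomposes the event ``$k$ points in convex position'' itself, but only bounds it from above by a union bound over the $\binom{k}{\lfloor k/2\rfloor}$ subsets of size $\lfloor k/2\rfloor$, reducing to the probability that one such subset is a cup or a cap. Your identification of the convex shapes (a cup-shaped caterpillar in the $0$-half glued at the root to a cap-shaped caterpillar in the $1$-half) matches the geometry of $\mu^{a,b}$, your recursion $\omega_m=\frac{m}{2^m-2}\omega_{m-1}$ coincides with the paper's recursion for the cup probability, and the root-split law $\binom{k}{\ell}/(2^k-2)$ is right; as a sanity check, your formula gives $\ell_E(\diamond_4)=\frac1{14}(4\omega_1\omega_3+6\omega_2^2+4\omega_3\omega_1)=\frac57$, which agrees with a direct inspection of the four-point configurations.

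The problem is the last line. At $\ell=\lfloor k/2\rfloor$ one has $\frac12\bigl(\ell^2+(k-\ell)^2\bigr)=\frac{k^2}{4}+O(1)$, not $\frac{k^2}{8}$, so the quantity you compute,
\[
\frac{1}{2^k-2}\sum_{\ell=1}^{k-1}\binom{k}{\ell}\,\omega_\ell\,\omega_{k-\ell},
\]
evaluates to $2^{-k^2/4+\bigo{k\log k}}$ (the summand $2^{-\frac12(\ell^2+(k-\ell)^2)+\bigo{k\log k}}$ peaks at $\ell=\lfloor k/2\rfloor$ and there are only $k-1$ terms), which does not match the statement. This is not a repairable slip inside your argument: because convexity forces \emph{both} halves of the root split to be caterpillars, your formula inherently carries the product $\omega_{k/2}^2\approx2^{-k^2/4}$, whereas the paper's union bound asks only for a \emph{single} $\lfloor k/2\rfloor$-subset to be a cup or a cap and therefore produces the single factor $\omega_{\lfloor k/2\rfloor}\approx2^{-k^2/8}$ --- and, note, only as an \emph{upper} bound; the paper's proof contains no matching lower bound. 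So either there is an error in your characterization of the convex shapes (I could not find one: a cap of three points in $R_0$ together with any point of $R_1$ is indeed not in convex position, since the middle point of the cap falls inside the triangle formed by the other three), or the exponent in the statement should read $-\frac{k^2}{4}$ rather than $-\frac{k^2}{8}$. Either way, as written your proof does not establish the proposition: it establishes a strictly stronger upper bound and a strictly weaker lower bound than claimed, and the concluding sentence asserting agreement with $2^{-k^2/8+\bigo{k\log k}}$ is an arithmetic error.
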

\begin{proof}
  Let~$\mu$ be a measure that does not charge lines. Let~$X$ and~$S$
  be two sets of random points chosen independently from~$\mu$ of respective sizes~$k$ and~$\lfloor k/2 \rfloor$.
  By the discussion above,
    \[\ell_\mu(\diamond_k) = \Prob_\mu(\text{$X$ is in convex position})\leq \binom{k}{\lfloor \frac k 2\rfloor} \Prob_\mu(\text{$S$ is a cup or a cap}).\]
  Moreover, the construction of~$\mu^{a,b}$ implies that
  \[\Prob_{\mu^{a,b}}(\text{$S$ is a cup or a cap})=2\Prob_{\mu^{a,b}}(\text{$S$ is a cup}).\]

  The condition that~$b \leq (1 - 2a)(1 - 2b)a$ ensures that every
  $s$-cup containing more than one point in~$\phi_0(R)$ contains at
  most one point in~$\phi_1(R)$. It follows that
  \[\begin{aligned}
  \Prob_{\mu^{a,b}}(\text{$S$ is a cup}) = &
        \Prob_{\mu^{a,b}}(\text{$S$ is a cup and $S \subset \phi_0(R)$}) + \Prob_{\mu^{a,b}}(\text{$S$ is a cup and $S \subset \phi_1(R)$})\\
        & + \Prob_{\mu^{a,b}}(\text{$S \cap \phi_0(R)$ is a cup and $|S \cap \phi_0(R)| = |S|-1$})
  \end{aligned}\]
  Observe that
      \[ \Prob_{\mu^{a,b}}(\text{$S$ is a cup $|\, S \subset \phi_0(R)$}) = \Prob_{\mu^{a,b}}(\text{$S$ is a cup $|\,S \subset \phi_1(R)$}) = \Prob_{\mu^{a,b}}(\text{$S$ is a cup}).\] 
  Altogether, defining~$f(s)$ to be the probability that~$s$ random
  points chosen independently from~$\mu^{a,b}$ form a cup, we have
  \[ f(s) = \frac{2}{2^{s}} f(s) + \frac {s}{2^s} f(s-1), \quad \text{that is,} \quad  f(s) = \frac{s}{2^s-2}f(s-1).\]
  Notice that $f(3) = \frac12$. This can be seen directly from
  the combinatorial description of~$\ell_E$ as follows. Consider three different
    sequences~$u,v,w \in {\{0,1\}}^n$, assuming that $u \prec_{lex} v
  \prec_{lex} w$. The orientation of~$(u,v,w)$ depends only on
  the first entry of~$v$ where $u$ and~$w$ differ, and this entry of~$v$
  is uniformly distributed in~$\{0,1\}$. Altogether,
  \[ f(s) = \prod_{i=3}^s \frac{i}{2^i-2} = 2^{- \frac{s(s+1)}2 +3} \prod_{i=3}^s\frac{i}{1-2^{-i+1}}\]
    so $f(s) = 2^{-\frac{s^2}2 + \bigo{s \log s}}$ and~$\ell_E(\diamond_k) = 2^{-\frac{k^2}8 + \bigo{k \log k}}$.
\end{proof}

\noindent
Until recently it was suspected that every realization of the order types of
Erd\H{o}s-Szekeres needed to be very spread out, in the sense that the quotient of the
diameter and the minimal distance between two points on every realization had
to be exponentially large. A construction of the Erd\H{o}s-Szekeres example in
a grid of size polynomial in~$n$ was recently achieved~\cite{ERZ}.

To some extent our next result vindicates the original intuition on
realizations of the order types of Erd\H{o}s-Szekeres. We show that the fast
decay of~$\ell_E(\diamond_k)$ exhibited in \cref{l:decay-LE} drastically
restricts its space of realizations.

\begin{lemma}\label{l:sylvester}
  Let~$\mu$ be a finite measure over~$\R^2$ for which lines are
  negligible and~$U$ an open set of positive $\mu$-measure.
  \begin{itemize}
  \item[(i)] If~$\mu$ is absolutely continuous, on~$U$, to the
    Lebesgue measure then $p(\diamond_k,\mu) \ge 4^{-k \log k + \bigo{k}}$.
  \item[(ii)] If~$\mu$ is absolutely continuous, on~$U$, to the length
      measure on a~$C^2$ curve then $p(\diamond_k,\mu) \ge 2^{-\bigo{k}}$.
  \end{itemize}
\end{lemma}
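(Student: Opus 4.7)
For (i), the approach is to compare~$\mu$ to a uniform distribution on a well-chosen ball and invoke the classical estimate (due to Valtr) that the probability of convex position for~$k$ i.i.d.\ uniform samples in a convex body is~$4^{-k\log k+\bigo{k}}$. The density~$f$ of~$\mu|_U$ relative to the Lebesgue measure is positive on a set of positive Lebesgue measure, so a two-sided truncation of~$f$ yields a Borel set~$V\subseteq U$ on which $c\le f\le C$ for constants $0<c\le C<\infty$ and with~$\mathrm{Leb}(V)>0$. Picking a Lebesgue-density point~$x_0$ of~$V$ and a ball~$B=B(x_0,r)\subset U$ in which~$V$ occupies a sufficiently large fraction of~$B$, and writing~$q=\mu(\mathbf{R}^2)$, I bound
\[
p(\diamond_k,\mu)\ge\left(\frac{c}{q}\right)^k\int_{(V\cap B)^k}\mathbf{1}[\text{convex}]\,\mathrm{d}\mathrm{Leb}^k,
\]
and lower-bound the integral by comparing it with $\int_{B^k}\mathbf{1}[\text{convex}]\,\mathrm{d}\mathrm{Leb}^k=\mathrm{Leb}(B)^k\cdot4^{-k\log k+\bigo{k}}$ (Valtr), subtracting off the contribution of configurations having a point outside~$V$. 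Absorbing the remaining constants into the $\bigo{k}$ exponent yields the claim.

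For (ii), the absolute continuity of~$\mu|_U$ with respect to the length measure on the $C^2$ curve~$\gamma$ forces $\mu|_U$ to be supported on~$\gamma$. Because $\mu$ charges no line, no straight piece of~$\gamma$ can carry positive $\mu$-measure; hence the open set $\{t:\kappa(t)\neq0\}$, where~$\kappa$ is the continuous curvature of an arclength parameterization of~$\gamma$, intersects $\mathrm{supp}(\mu|_\gamma)$ in a set of positive $\mu$-measure. Selecting a connected component of this open set and restricting to a subarc~$\gamma'$ short enough that the tangent direction rotates by less than~$\pi$ yields a strictly convex subarc on which any finite subset is automatically in convex position. The bound $p(\diamond_k,\mu)\ge\left(\mu(\gamma')/\mu(\mathbf{R}^2)\right)^k=2^{-\bigo{k}}$ follows.

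The main technical hurdle lies in (i): ensuring that the factor $\mathrm{Leb}(V\cap B)^k$ contributes only an $\bigo{k}$ term to the exponent. This requires choosing~$r$ so that the ratio $\mathrm{Leb}(V\cap B)/\mathrm{Leb}(B)$ is close enough to~$1$ relative to the Valtr probability~$P_k^B$ while keeping $\mathrm{Leb}(B)$ bounded below by a constant depending only on~$\mu$. Combining the initial two-sided truncation of~$f$ with the Lebesgue density theorem applied to~$V$ makes such a choice possible, but the deficit estimate must be tracked carefully through the comparison with the Valtr inequality.
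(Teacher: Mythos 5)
Your treatment of part~(ii) is essentially the paper's own: using that~$\mu$ charges no line, find a subarc of the~$C^2$ curve with curvature bounded away from zero and positive $\mu$-mass; every finite subset of a strictly convex arc is in convex position, so $p(\diamond_k,\mu)\ge\pth{\mu(\gamma')/\mu(\R^2)}^k=2^{-\bigo k}$. Part~(i), however, departs from the paper's route, and the departure has a genuine gap.

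The paper locates a square~$A\subseteq U$ on which the density~$f=\frac{d\mu}{d\lambda_2}$ is bounded below by~$c>0$, so that $p(\diamond_k,\mu_{|A})\ge c^k\lambda_2(A)^k\mu(A)^{-k}\,p(\diamond_k,\lambda_{2|A})$, and invokes Valtr's exact formula for the square: the comparison is with Lebesgue measure on a \emph{convex} region, and it only costs a $2^{\bigo k}$ factor. You instead truncate~$f$ to a set~$V$ with $c\le f\le C$, pick a Lebesgue density point~$x_0$ of~$V$ and a ball~$B=B(x_0,r)$ with deficit $\epsilon=\lambda_2(B\setminus V)/\lambda_2(B)$, and propose the subtraction
\[
  \int_{(V\cap B)^k}1_{\diamond_k}\diff\lambda_2^k\ \ge\
  \int_{B^k}1_{\diamond_k}\diff\lambda_2^k\ -\ k\,\lambda_2(B\setminus V)\,\lambda_2(B)^{k-1}
  \ =\ \lambda_2(B)^k\pth{P_k-k\epsilon},
\]
where $P_k=4^{-k\log k+\bigo k}$ is the Valtr probability for~$B$. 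This is vacuous unless $\epsilon<P_k/k$, but~$P_k$ decays superexponentially while~$\epsilon$ is a fixed positive number once~$r$ is fixed, so the bound dies for all large~$k$. Letting~$r=r_k$ shrink with~$k$ does not rescue it: the Lebesgue density theorem carries no quantitative rate at which $\epsilon(r)\to 0$, so there is no way to guarantee $\epsilon(r_k)<P_k/k$ without~$r_k$ shrinking so fast that the prefactor $\pth{c\,\lambda_2(B)/\mu(\R^2)}^k=\pth{c\pi r_k^2/\mu(\R^2)}^k$ is no longer $2^{-\bigo k}$. The two constraints you record in your final paragraph---$\lambda_2(B)$ bounded below by a constant and~$\epsilon$ beating a superexponentially small threshold for every~$k$---are incompatible, so the ``main technical hurdle'' is not delicate bookkeeping but a genuine obstruction. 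The structural reason is that the target event (``$k$ uniform points in~$B$ in convex position'') is itself superexponentially rare, so it cannot absorb a per-point conditioning loss of any fixed positive probability; the truncation step is fine, but what you need afterwards is a convex comparison region where~$f$ is bounded pointwise below (as the paper arranges), not a measure-theoretic approximation of one.
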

\begin{proof}
    By the Radon-Nikodym theorem~\cite{Nik30,Rad13}, if a measure~$\mu$ is absolutely
  continuous to a measure~$\lambda$ on a measurable set~$X$ then there
  exists an absolutely continuous function~$\frac{d\mu}{d\lambda}$
  such that $\mu(A)=\int_A\frac{d\mu}{d\lambda} d\lambda$ for every
  measurable set~$A\subseteq X$.

  \medskip
  
  Let~$\lambda_2$ be the Lebesgue measure over~$\R^2$. Since
  $\mu(U) > 0$ and~$\mu$ is absolutely continuous to~$\lambda_2$ on~$U$,
    the function~$\frac{d\mu}{d\lambda_2}$ is nonnegative, nonzero,
  and continuous on~$U$. In particular,~$U$ contains some square~$A$
  on which~$\frac{d\mu}{d\lambda_2}$ is bounded from below by some
  positive constant~$c$. The probability that~$k$ random points chosen
  independently from~$\mu$ are all in~$A$ is
  $\pth{\frac{\mu(A)}{\mu(\R^2)}}^k$. Conditioning on this event, we deduce that
  \[ p(\diamond_k,\mu) \ge \pth{\frac{\mu(A)}{\mu(\R^2)}}^k p(\diamond_k, \mu_{|A}).\]
  Let~$1_{\diamond_k}$ be the indicator function,
  over~$A^k$, of~$k$-tuples of points in convex position. We have
  \[\begin{aligned}
  p(\diamond_k, \mu_{|A}) = & \int_{A^{k}}1_{\diamond_k}
  d\mu(x_1)d\mu(x_2) \dotso d\mu(x_k)\\
  = & \int_{A^{k}}1_{\diamond_k} \frac{d\mu}{d\lambda_2}(x_1)\frac{d\mu}{d\lambda_2}(x_2)\dotso \frac{d\mu}{d\lambda_2}(x_k)d\lambda_2(x_1)d\lambda_2(x_2)\dotso d\lambda_2(x_k)\\
  \ge & c^k  \int_{A^{k}}1_{\diamond_k} d\lambda_2(x_1)d\lambda_2(x_2)\dotso d\lambda_2(x_k) = c^k  p(\diamond_k, \lambda_{2|A})
 \end{aligned}\]
  Valtr~\cite{Valtr} proved that $p(\diamond_k,\lambda_{2|A}) =
  \frac1{k!^2}\binom{2k-1}{k-1}^2$, so altogether
    \[ p(\diamond_k,\mu) \ge \pth{\frac{\mu(A)}{\mu(\R^2)}}^k \frac{c^k}{k!^2}\binom{2k-1}{k-1}^2 = \Omega\pth{4^{-k \log k+\bigo{k}}},\]
  which proves the first statement.

\medskip
  
For the $1$-dimensional case, let~$d\lambda_1$ be the $1$-dimensional Lebesgue
    measure and let~$\Gamma$ be a~$C^2$ curve such that~$\mu$ is absolutely
    continuous to~$\lambda_{1|\Gamma}$ on some open set~$U$. Since
    $\frac{d\mu}{d\lambda_1}_{|\Gamma}$ is continuous, nonnegative and
    nonzero, there is an open set~$U' \subseteq U$ such that
    $\frac{d\mu}{d\lambda_1}$ is positive on~$\Gamma \cap U'$.

Since~$\Gamma$ is~$C^2$, its curvature is a continuous
  function. As~$\mu$ does not charge lines, this curvature is nonzero
  and there exists an open subset~$U''$ of~$U$ such that $U'' \cap
  \Gamma$ is non-empty and~$\Gamma$ has positive curvature on~$U''$.
  Up to passing to a smaller neighborhood, we can find an arc~$\gamma$
  of our curve that has positive length and that is entirely on its
  convex hull. Hence, any~$k$ points on~$\gamma$ are in convex
  position. Moreover, $\frac{d\mu}{d\lambda_1}_{|\Gamma}$ is positive
  on~$U'$ and therefore on~$\gamma$, so $\mu(\gamma)$ is positive. It follows that 
\[ \forall k, \quad p(\diamond_k, \mu) \ge \pth{\frac{\mu(\gamma)}{\mu(\R^2)}}^k,\]
  which proves the second statement.
\end{proof}

\appendix

\section{Some elementary properties from measure theory}

In the proof of \cref{lem:stepfunction} we use the following property coming from measure theory.

\begin{lemma}\label{lem:basic}
  If~$(J,\mu)$ is a probability space and~$X$ is a non-empty family
  of measurable subsets of~$J$ such that 
  \begin{itemize}
  \item $X$ generates the $\sigma$-algebra of measurable sets; and
  \item $X$ is stable under finite unions and complementary operations,
  \end{itemize}
  then for every~$\varepsilon>0$ and every measurable set~$A$
  there is~$B\in X$ such that $\mu(A\triangle B)\leq\varepsilon$,
  where~$A\triangle B$ stands for the symmetric difference of~$A$ and~$B$,
  that is~$A\triangle B=(A\setminus B)\cup(B\setminus A)$.
\end{lemma}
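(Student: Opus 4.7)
The plan is the classical ``good sets principle'': I will introduce the collection
\[
\mathcal{G} = \sst{A\subseteq J \text{ measurable}}{\forall\varepsilon>0,\ \exists B\in X,\ \mu(A\triangle B)\le\varepsilon},
\]
show that $\mathcal{G}$ is a $\sigma$-algebra containing $X$, and conclude using the generation hypothesis.

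First, I would verify the easy closures. By choosing $B=A$, we have $X\subseteq\mathcal{G}$. Since $X$ is non-empty, pick any $C\in X$; by the complement stability of $X$ we get $J\setminus C\in X$, and by the finite-union stability $J=C\cup(J\setminus C)\in X\subseteq\mathcal{G}$. For stability of $\mathcal{G}$ under complement, if $B\in X$ approximates $A\in\mathcal{G}$ within $\varepsilon$, then $J\setminus B\in X$ and $(J\setminus A)\triangle(J\setminus B)=A\triangle B$, so $J\setminus A\in\mathcal{G}$.

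The main step is closure under countable unions, which is where the argument is not purely algebraic and relies on the fact that $\mu$ is a finite measure. Let ${(A_n)}_{n\in\N}\subseteq\mathcal{G}$ and set $A=\bigcup_{n}A_n$. Fix $\varepsilon>0$. By continuity of $\mu$ from below, choose $N$ with $\mu\!\left(A\setminus\bigcup_{n=1}^N A_n\right)\le\varepsilon/2$. For each $n\in[N]$, pick $B_n\in X$ such that $\mu(A_n\triangle B_n)\le\varepsilon/(2N)$. Since $X$ is stable under finite unions, $B=\bigcup_{n=1}^N B_n\in X$, and using the elementary inclusion $\bigl(\bigcup_{n=1}^N A_n\bigr)\triangle\bigl(\bigcup_{n=1}^N B_n\bigr)\subseteq\bigcup_{n=1}^N(A_n\triangle B_n)$ together with subadditivity, we get
\[
\mu(A\triangle B)\le \mu\!\left(A\setminus\bigcup_{n=1}^N A_n\right)+\sum_{n=1}^N\mu(A_n\triangle B_n)\le \frac{\varepsilon}{2}+\frac{\varepsilon}{2}=\varepsilon,
\]
so $A\in\mathcal{G}$.

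Thus $\mathcal{G}$ is a $\sigma$-algebra containing $X$. Since $X$ generates the $\sigma$-algebra of measurable sets, $\mathcal{G}$ equals the latter, which is exactly the claim. The main obstacle is the countable-union step; it is where finiteness of $\mu$ (ensuring $\mu(A)<\infty$ so that the continuity-from-below reduction is effective) is truly needed, while the rest reduces to rewriting symmetric differences.
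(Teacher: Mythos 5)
Your proof is correct and follows essentially the same route as the paper: the good-sets principle, with the countable-union step handled by a truncation to a finite union that relies on the finiteness of $\mu$. The only cosmetic difference is that you truncate $\bigcup_n A_n$ first (via continuity from below) and then approximate the finitely many remaining sets, whereas the paper approximates every $A_i$ within $\varepsilon/2^i$ and then truncates the resulting union of $B_i$'s; the two are interchangeable.
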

\begin{proof}  
  One can prove the statement above by showing that the family of
  sets~$A$ for which there is indeed such an element~$B\in X$
  contains~$X$ and is stable under taking complements and countable
  unions.

  The result is true if~$A \in X$, since it then suffices to
  take~$B=A$.  Assuming that~$A$ satisfies that $\mu(A\triangle
  B)\leq\varepsilon$ for some~$B \in X$, the
  complement~$\bar{A}$ of~$A$ (in~$J$) also satisfies
  $\mu(\bar{A}\triangle\bar{B})=\mu(A\triangle B)\leq\varepsilon$
  and~$\bar{B}$ belongs to~$X$ since~$X$ is stable under taking complements.
    Let~${(A_i)}_{i\in\mathbf{N}}$ be a countable family
  of measurable subsets satisfying the property and let us prove that
  the property holds for the set $A=\bigcup_{i\in\mathbf{N}}A_i$.  For
  each~$i\in\mathbf{N}$, we know that there is a set~$B_i\in X$ such
  that~$\mu(A_i\triangle B_i) \leq \varepsilon/2^i$.  Taking
  $S_k=\bigcup_{i=1}^kB_i$ for~$k\in\mathbf{N}$ and
  $S_\infty=\bigcup_{i\in\mathbf{N}}B_i$, we have $\mu(A\triangle
  S_\infty)\leq \sum_{i\in\mathbf{N}}\mu(A_i\triangle B_i)\leq
    2\varepsilon$.  Note that ${(S_k)}_{k\in\mathbf{N}}$ is an increasing
  sequence of sets of~$X$ whose union is~$S_\infty$.  Since~$\mu$ is a
  probability measure, $\mu(S_\infty)$ is finite.  It follows that the
    real number sequence~${(\mu(S_k))}_{k\in\mathbf{N}}$ tends
  to~$\mu(S_\infty)$ as~$k$ tends to infinity.  Let~$k\in\mathbf{N}$
  be an index such that $\mu(S_\infty)-\mu(S_k) \leq\varepsilon$.
  Then $\mu(S_\infty \triangle S_k)=\mu(S_\infty)-\mu(S_k)
  \leq\varepsilon$ because $S_k\subseteq S_\infty$.  It thus follows
  that $\mu(A\triangle S_k)\leq\mu(A\triangle S_\infty)+
  \mu(S_\infty\triangle S_k)\leq 3\varepsilon$.
\end{proof}

We focus on Borel measures of~$\mathbf{R}^n$. In this case, the atoms
are precisely the singletons with positive measure.

\begin{lemma}[Sikorski~\cite{Sik58}]\label{lem-subsetofmeasurex}
  Let~$\mu$ be an atom-free measure on a set~$J$
  and let~$A\subseteq J$ be a measurable set with finite $\mu$-measure.
  Then for every every non-negative number~$x \leq \mu(A)$,
  there is a measurable subset~$B\subseteq A$ with~$\mu(B)=x$.
\end{lemma}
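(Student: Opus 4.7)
The plan is to reduce the statement to the following atomless divisibility lemma and then build $B$ by a greedy exhaustion argument. Throughout I may assume $0<x<\mu(A)$, since the boundary cases are handled by taking $B=\emptyset$ or $B=A$.

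\textbf{Divisibility Lemma.} \emph{If $\mu$ is atom-free on $J$ and $C$ is a measurable set with $0<\mu(C)<\infty$, then for every $\varepsilon>0$ there is a measurable $D\subseteq C$ with $0<\mu(D)<\varepsilon$.}

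I would prove the Divisibility Lemma by contradiction. If no such $D$ exists, some $\delta>0$ witnesses that every measurable $D'\subseteq C$ has either $\mu(D')=0$ or $\mu(D')\ge\delta$. Since $C$ is not an atom, atom-freeness yields $D_1\subsetneq C$ with $0<\mu(D_1)<\mu(C)$, and by the supposed dichotomy both $\mu(D_1)\ge\delta$ and $\mu(C\setminus D_1)\ge\delta$. Iterating the splitting on one of the pieces produced so far (always possible because no measurable set of positive measure is an atom) yields, after $n$ steps, $n+1$ pairwise disjoint measurable subsets of $C$, each of measure at least $\delta$. This contradicts $\mu(C)<\infty$ once $(n+1)\delta>\mu(C)$.

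Granted the Divisibility Lemma, I build $B$ by setting $B_0=\emptyset$ and, at each stage $n$ with $\mu(B_n)<x$, defining
\[
  \delta_n \;=\; x-\mu(B_n) \qquad\text{and}\qquad s_n \;=\; \sup\{\mu(C) : C\subseteq A\setminus B_n \text{ measurable},\ \mu(C)\le\delta_n\},
\]
then picking some admissible $C_n$ with $\mu(C_n)\ge s_n/2$ and putting $B_{n+1}=B_n\cup C_n$. The limit $B=\bigcup_n B_n$ is measurable with $\mu(B)=\lim_n\mu(B_n)\le x$. Suppose, for contradiction, that $\mu(B)<x$. Then $\mu(A\setminus B)\ge \mu(A)-x>0$, and applying the Divisibility Lemma to $A\setminus B$ with $\varepsilon=x-\mu(B)>0$ yields a measurable $D\subseteq A\setminus B$ with $0<\mu(D)<\varepsilon\le\delta_n$ for every $n$. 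Such a $D$ is admissible in the supremum defining $s_n$ for each $n$, so $s_n\ge\mu(D)>0$; yet $\sum_n\mu(C_n)=\mu(B)\le x<\infty$ forces $\mu(C_n)\to 0$, hence $s_n\le 2\mu(C_n)\to 0$, a contradiction. Therefore $\mu(B)=x$, as desired.

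The main obstacle is the Divisibility Lemma: once one has extracted the ``continuity'' implicit in atom-freeness, it powers both the greedy enlargement of the $B_n$ and the final argument that no positive gap to $x$ can persist. The remaining ingredients—handling the trivial cases and performing the exhaustion—are textbook.
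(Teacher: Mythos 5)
Your proof is correct, but it takes a route that differs in structure from the paper's. You isolate a \emph{Divisibility Lemma} (atom-freeness yields measurable subsets of arbitrarily small positive measure) and then build $B$ by a greedy exhaustion with a ``half-supremum'' choice at each stage, closing the gap to $x$ via the Divisibility Lemma. The paper instead never makes the divisibility property explicit: it defines $\alpha(B_1,B_2,y)=\sup\{\mu(X):\mu(X)\leq y,\ B_1\subseteq X\subseteq B_2\}$, constructs in one countable pass a ``saturated'' set $C_1\subseteq A$ with $\mu(C_1)=\alpha(C_1,A,x)\le x$ and, symmetrically, a saturated $C_2\subseteq A\setminus C_1$ below $\mu(A)-x$, and then argues that the residual $S=A\setminus(C_1\cup C_2)$ must be an atom or null because any nontrivial split of $S$ would push both $C_1$ and $C_2$ over their respective thresholds simultaneously, overcounting $\mu(A)$. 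Since atom-freeness rules out $S$ being an atom, $\mu(S)=0$ and $\mu(C_1)=x$. Your approach is perhaps more elementary in spirit (the Divisibility Lemma is the standard ``non-atomic'' characterization and its pigeonhole proof is transparent), while the paper's two-sided saturation argument is slicker in that it reaches the conclusion with a single appeal to atom-freeness and no separate lemma. Both are complete and rigorous; in particular, your observation that $s_n>0$ whenever $\mu(B_n)<x$ (needed to keep the greedy process moving) does indeed follow from the Divisibility Lemma applied to $A\setminus B_n$, and the convergence $s_n\le 2\mu(C_n)\to 0$ correctly closes the argument.
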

\begin{proof}
  If~$y$ is a real number and~$B_1,B_2\subseteq J$ are measurable sets
  satisfying~$\mu(B_1)\leq y\leq\mu(B_2)$, we define
  \[
    \alpha(B_1,B_2,y) = \sup\sst{\mu(X)}{\mu(X)\leq y, B_1\subseteq X\subseteq B_2}
  \]
  where only measurable sets~$X$ are considered.

  We first prove that for every measurable set~$B$ and real number~$y$ such that~$y\leq\mu(B)<\infty$,
  there exists a measurable set $C\subseteq B$ satisfying~$\mu(C)=\alpha(C,B,y)$.
  To see this, we fix a sequence~${(\varepsilon_n)}_{n\in\N}$ of positive numbers tending to~$0$.
  Next, we define an increasing sequence of sets~${(C_n)}_{n\in\N}$
  satisfying~$\mu(C_n)\leq y$ for every~$n\in\N$ as follows.
  Set~$C_0=\varnothing$ and observe that by the definition of~$\alpha$, for each~$i\geq 1$
  there exists a measurable set~$C_i$ such that~$C_{i-1}\subseteq C_i\subseteq B$ and~$\mu(C_i) \geq \alpha(C_{i-1},B,y)-\varepsilon_i$.
  Set~$C=\bigcup_{n\in\N}C_n$ and note that
  $\alpha(C,B,y)\leq\alpha(C_n,B,y)$ since~$C_n\subseteq C$ for every~$n\in\N$.
  It follows that
  \[\forall n\ge1,\quad
    \mu(C_n) \geq \alpha(C_{n-1},B,y)-\varepsilon_n
    \geq \alpha(C,B,y)-\varepsilon_n.
  \]
  Letting~$n$ tends to infinity yields that~$\mu(C)\geq \alpha(C,B,y)$.
  This upper bound on the supremum~$\alpha(C,B,y)$ is in particular
  reached by~$C$, so $\mu(C)=\alpha(C,B,y)$.
  This proves the property stated.

  Since~$x\le\mu(A)<\infty$, we thus know that
    there exists a measurable set~$C_1\subseteq A$ such that~$\mu(C_1)=\alpha(C_1,A,x)\leq x$.
  Further, as $\mu(A)-x\le\mu(A\setminus C_1)<\infty$, we also know that
    there exists a measurable set~$C_2\subseteq A\setminus C_1$
    such that $\mu(C_2)=\alpha(C_2,A\setminus C_1,\mu(A)-x)\leq \mu(A)-x$.
  
  As it turns out, the set~$S=A\setminus(C_1\cup C_2)$ is an atom unless it has
  measure~$0$. Indeed, suppose on the contrary that~$S$ has a measurable subset~$T$
  with~$0<\mu(T)<\mu(S)$.
  Then $\mu(C_1\cup T) > x$ since~$\alpha(C_1,A,x)=\mu(C_1)$.
  Similarly, $\mu(C_2\cup (S\setminus T)) > \mu(A)-x$ since~$\alpha(C_2,A\setminus C_1,\mu(A)-x)=\mu(C_2)$.
  Consequently, on the one hand
    $\mu(A)<\mu(C_2\cup (S\setminus T))+\mu(C_1\cup T)$ while
    on the other hand~$A$ is the disjoint union of~$(C_1\cup T)$ and~$(C_2\cup(S\setminus T))$,
    which is a contradiction.

    Therefore, since~$\mu$ is atom-free, it follows that
    $\mu(S)=0$, \emph{i.e},~$\mu(A)=\mu(C_1)+\mu(C_2)$, which implies
    that $\mu(C_1)=x$.
\end{proof}

\end{document}